\documentclass{amsart}

\usepackage{enumerate, amsmath, amsfonts, amssymb, amsthm, wasysym, graphics, graphicx, xcolor, url, hyperref, hypcap}
\hypersetup{colorlinks=true, citecolor=darkblue, linkcolor=darkblue}
\graphicspath{{figures/}}

% theorems
\newtheorem{theorem}{Theorem}[section]
\newtheorem{corollary}[theorem]{Corollary}
\newtheorem{lemma}[theorem]{Lemma}
\newtheorem{proposition}[theorem]{Proposition}

\newtheorem{definition}[theorem]{Definition}

\newtheorem{question}[theorem]{Question}

\theoremstyle{definition}
\newtheorem{example}[theorem]{Example}
\newtheorem{remark}[theorem]{Remark}

% math
\newcommand{\R}{\mathbb{R}}
\newcommand{\N}{\mathbb{N}}

\newcommand{\cN}{\mathcal{N}}

\newcommand{\cB}{\mathcal{B}}
\newcommand{\cP}{\mathcal{P}}
\newcommand{\cX}{\mathcal{X}}
\newcommand{\cY}{\mathcal{Y}}
\newcommand{\cZ}{\mathcal{Z}}
\newcommand{\fS}{\mathfrak{S}}
\newcommand{\set}[2]{\left\{ #1 \;\middle|\; #2 \right\}}
\newcommand{\GP}[2]{\mathbb{#1} \left(\left\{ #2 \right\}_{I\in[n]}\right)} % generalized permutahedron
\newcommand{\sGP}[2]{\mathbb{#1} (\{#2\})} % small generalized permutahedron
\newcommand{\ssm}{\smallsetminus}
\newcommand{\dotprod}[2]{\left\langle #1 \;\middle|\; #2 \right\rangle}
\newcommand{\symdif}{\triangle}
\newcommand{\simplex}{\triangle}
\newcommand{\contact}{\#}
\newcommand{\one}{1\!\!1}
\newcommand{\VS}{\textsc{vs}}
\newcommand{\eqdef}{\mbox{\,\raisebox{0.2ex}{\scriptsize\ensuremath{\mathrm:}}\ensuremath{=}\,}} % :=
\newcommand{\eqfed}{\mbox{~\ensuremath{=}\raisebox{0.2ex}{\scriptsize\ensuremath{\mathrm:}} }} % =:
\DeclareMathOperator{\conv}{conv}
\DeclareMathOperator{\cone}{cone}
\DeclareMathOperator{\arr}{Arr}

% others
\definecolor{darkblue}{rgb}{0,0,0.7} % couleur bleue foncee (impresison couleur)
\newcommand{\darkblue}{\color{darkblue}}
\newcommand{\defn}[1]{\emph{\darkblue #1}} % mise en valeur d'une definition
\newcommand{\fref}[1]{Figure~\ref{#1}} % reference figures
 % petit espacement vertical
 % moyen espacement vertical
 % grand espacement vertical
\newcommand{\ie}{\textit{i.e.}~} % id est
\newcommand{\eg}{\textit{e.g.}~} % exampli gratia
\renewcommand{\paragraph}[1]{\medskip\noindent\textsc{#1}.}

\hyphenation{asso-cia-he-dron charac-teri-zation cor-res-pond}

\begin{document}

\title{The brick polytope of a sorting network}
\thanks{
Research supported by grants MTM2008-04699-C03-02 and CSD2006-00032 (i-MATH) of the Spanish MICINN. An extended abstract of this paper appeared in the proceedings of the \emph{23rd International Conference on Formal Power Series and Algebraic Combinatorics} (FPSAC'11). The contents of Section \ref{sec:multi} also appeared in the PhD dissertation of the first author \cite{Pilaud}.
}

\author{Vincent Pilaud}
\address{Laboratoire d'Informatique de l'\'Ecole Polytechnique, Palaiseau, France}
\email{pilaud@lix.polytechnique.fr}

\author{Francisco Santos}
\address{Departamento de Matem\'aticas Estad\'istica y Computaci\'on, Universidad de Cantabria, Santander, Spain}
\email{francisco.santos@unican.es}

\date{}

\vspace*{-1.2cm}
\maketitle

\vspace*{-.9cm}
\begin{abstract}
The associahedron is a polytope whose graph is the graph of flips on triangulations of a convex polygon. Pseudotriangulations and multitriangulations generalize triangulations in two different ways, which have been unified by Pilaud \& Pocchiola in their study of flip graphs on pseudoline arrangements with contacts supported by a given sorting network.

In this paper, we construct the brick polytope of a sorting network, obtained as the convex hull of the brick vectors associated to each pseudoline arrangement supported by the network. We combinatorially characterize the vertices of this polytope, describe its faces, and decompose it as a Minkowski sum of matroid polytopes.

Our brick polytopes include Hohlweg \& Lange's many realizations of the associahedron, which arise as brick polytopes for certain well-chosen sorting networks. We furthermore discuss the brick polytopes of sorting networks supporting pseudoline arrangements which correspond to multitriangulations of convex polygons: our polytopes only realize subgraphs of the flip graphs on multitriangulations and they cannot appear as projections of a hypothetical multiassociahedron.

\medskip
\noindent
{\sc keywords.}
associahedron $\cdot$ sorting networks $\cdot$ pseudoline arrangements with contacts $\cdot$ multitriangulations $\cdot$ zonotopes
\end{abstract}

\tableofcontents
\vspace{-1cm}

%%%%%%%%%%%%%%%%%%%

\newpage
\section{Introduction}

This paper focusses on polytopes realizing flip graphs on certain geometric and combinatorial structures. Various examples of such polytopes are illustrated in \fref{fig:polytopalRealizations} and described along this introduction. The motivating example is the \defn{associahedron} whose vertices correspond to triangulations of a convex polygon~$\cP$ and whose edges correspond to flips between them. The boundary complex of its polar is (isomorphic to) the simplicial complex of crossing-free sets of internal diagonals of~$\cP$. The associahedron appears under various motivations ranging from geometric combinatorics to algebra, and several different constructions have been proposed (see~\cite{Lee,Loday,HohlwegLange,CeballosSantosZiegler}). We have represented two different realizations of the $3$-dimensional associahedron in \fref{fig:polytopalRealizations} (top). In fact, the associahedron is a specific case of a more general polytope: the \defn{secondary polytope}~\cite{gkz,bfs} of a $d$-dimensional set~$\cP$ of $n$ points is a $(n-d-1)$-dimensional polytope whose vertices correspond to regular triangulations of~$\cP$ and whose edges correspond to regular flips between them. Its boundary complex is (isomorphic to) the refinement poset of regular polyhedral subdivisions of~$\cP$. See \fref{fig:polytopalRealizations}~(middle left). We refer to~\cite{lrs} for a reference on triangulations of point sets and of the structure of their flip graphs.

\begin{figure}[b]
  \capstart
  \centerline{\includegraphics[width=.95\textwidth]{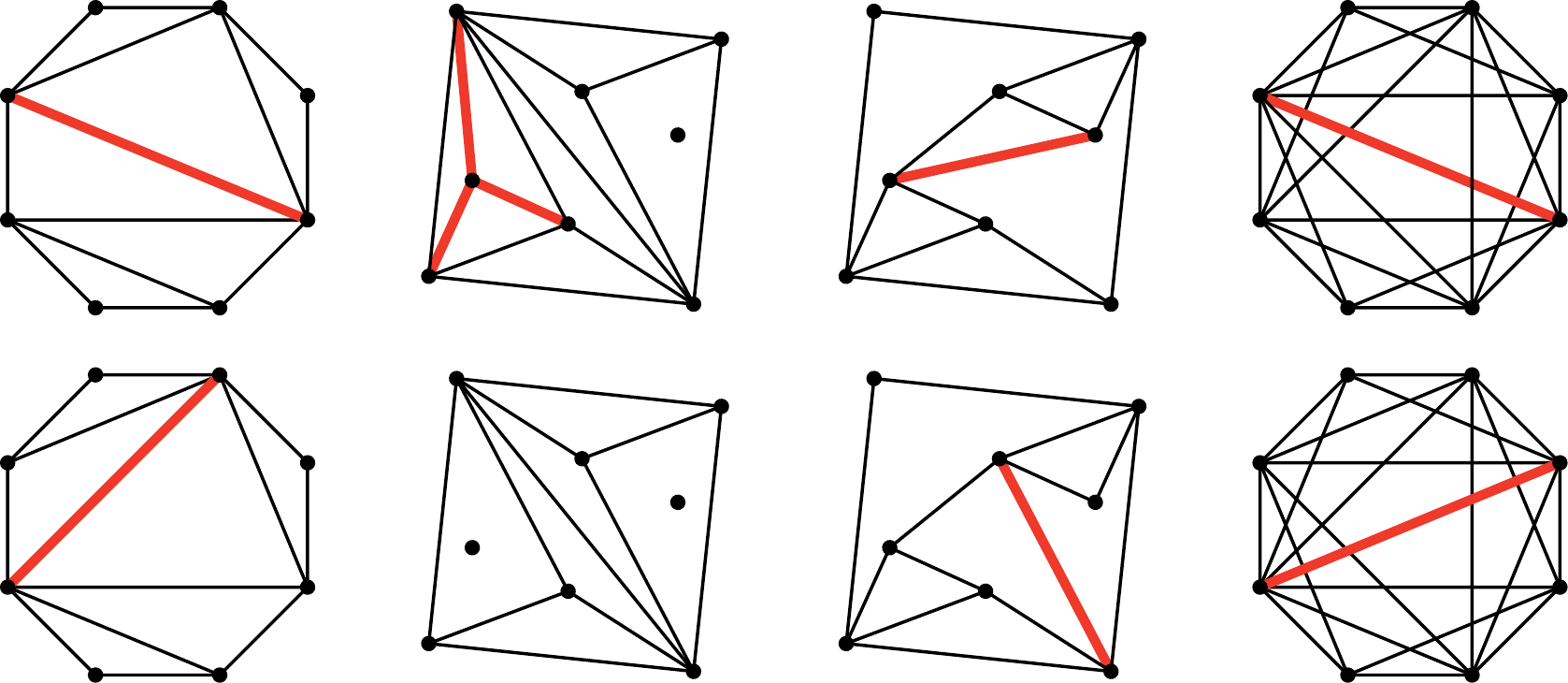}}
  \caption{Flips in four geometric structures: a triangulation of a convex polygon, a triangulation of a general point set, a pseudotriangulation and a $2$-triangulation of a convex polygon.}
  \label{fig:geometricFlips}
\end{figure}

\begin{figure}[p]
  \capstart
  \centerline{\includegraphics[width=1.08\textwidth]{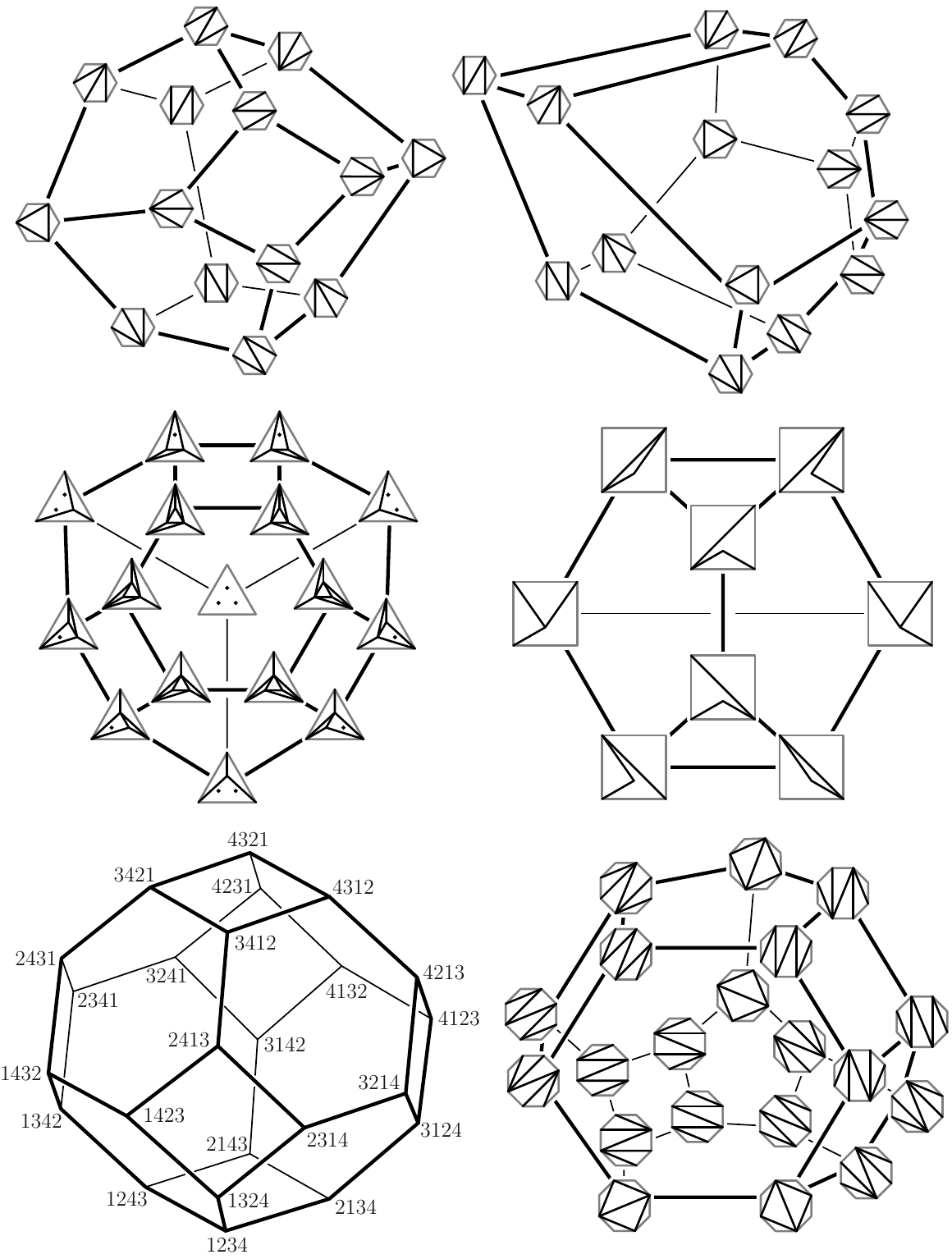}}
  \caption{Polytopal realizations of various flip graphs: two constructions of the $3$-dimensional associahedron ---~the secondary polytope of the regular hexagon (top left) and Loday's construction (top right); the secondary polytope of a set of $6$ points (middle left); a $3$-dimensional pseudotriangulations polytope (middle right); the $3$-dimensional permutahedron (bottom left); the $3$-dimensional cyclohedron (bottom right).}
  \label{fig:polytopalRealizations}
\end{figure}

Our work was motivated by two different generalizations of planar triangulations, whose combinatorial structures extend that of the associahedron~---~see \fref{fig:geometricFlips}:
\begin{enumerate}[(i)]
\item Let~$\cP$ be a point set in general position in the Euclidean plane, with $i$ interior points and $b$ boundary points. A set of edges with vertices in~$\cP$ is \defn{pointed} if the edges incident to any point of~$\cP$ span a pointed cone. A \defn{pseudotriangle} on~$\cP$ is a simple polygon with vertices in~$\cP$, which has precisely three convex corners, joined by three concave chains. A (pointed) \defn{pseudotriangulation} of~$\cP$ is a decomposition of its convex hull into~$i+b-2$ pseudotriangles on~$\cP$~\cite{PocchiolaVegter,RoteSantosStreinu-survey}. Equivalently, it is a maximal pointed and crossing-free set of edges with vertices in~$\cP$. The \defn{pseudotriangulations polytope}~\cite{RoteSantosStreinu-polytope} of the point set~$\cP$ is a simple $(2i+b-3)$-dimensional polytope whose vertices correspond to pseudotriangulations of~$\cP$ and whose edges correspond to flips between them. The boundary complex of its polar is (isomorphic to) the simplicial complex of pointed crossing-free sets of internal edges on~$\cP$. See \fref{fig:polytopalRealizations} (middle right).
\item A \defn{$k$-triangulation} of a convex $n$-gon~$\cP$ is a maximal set of diagonals with no $(k+1)$-crossing (no $k+1$ diagonals are mutually crossing)~\cite{CapoyleasPach, PilaudSantos, Pilaud}. We can forget the diagonals of the $n$-gon with less than $k$ vertices of~$\cP$ on one side: they cannot appear in a $(k+1)$-crossing and thus they belong to all $k$-triangulations of~$\cP$. The other edges are called \defn{$k$-relevant}. The simplicial complex~$\Delta_n^k$ of $(k+1)$-crossing-free sets of $k$-relevant diagonals of $\cP$ is a topological sphere~\cite{Jonsson, Stump} whose facets are $k$-triangulations of $\cP$ and whose ridges are flips between them. It remains open whether or not this simplicial complex is the boundary complex of a polytope. 
\end{enumerate}

In~\cite{PilaudPocchiola}, Pilaud \& Pocchiola developed a general framework which generalizes both pseudotriangulations and multitriangulations. They study the graph of flips on \defn{pseudoline arrangements with contacts} supported by a given sorting network. The present paper is based on this framework. Definitions and basic properties are recalled in Section~\ref{sec:definitions}.

In this paper, we define and study the \defn{brick polytope} of a sorting network~$\cN$, obtained as the convex hull of vectors associated to each pseudoline arrangement supported by~$\cN$. Our main result is the characterization of the pseudoline arrangements which give rise to the vertices of the brick polytope, from which we derive a combinatorial description of the faces of the brick polytope. We furthermore provide a natural decomposition of the brick polytope into a Minkowski sum of matroid polytopes. These structural results are presented in Section~\ref{sec:structure}. We illustrate the results of this section with particular sorting networks whose brick polytopes are graphical zonotopes. Among them, the \defn{permutahedron} is a well-known simple $(n-1)$-dimensional polytope whose vertices correspond to permutations of~$[n]$ and whose edges correspond to pairs of permutations which differ by an adjacent transposition. Its boundary complex is (isomorphic to) the refinement poset of ordered partitions of~$[n]$. See \fref{fig:polytopalRealizations} (bottom left).

We obtain our most relevant examples of brick polytopes in Section~\ref{sec:associahedra}. We observe that for certain well-chosen sorting networks, our brick polytopes coincide (up to translation) with Hohlweg \& Lange's realizations of the associahedron~\cite{HohlwegLange}. We therefore provide a complementary point of view on their polytopes and we complete their combinatorial description. We obtain in particular a natural Minkowski sum decomposition of these polytopes into matroid polytopes.

Finally, Section~\ref{sec:multi} is devoted to our initial motivation for the construction of the brick polytope. We wanted to find a polytopal realization of the simplicial complex~$\Delta_n^k$ of $(k+1)$-crossing-free sets of $k$-relevant diagonals of the $n$-gon. Using Pilaud \& Pocchiola's correspondence between multitriangulations and pseudoline arrangement covering certain sorting networks~\cite{PilaudPocchiola}, we construct a point configuration in $\R^{n-2k}$ with one point associated to each $k$-triangulation of the $n$-gon. We had good reasons to believe that this point set could be a projection of the polar of a realization of~$\Delta_n^k$: the graph of the corresponding brick polytope (the convex hull of this point configuration) is a subgraph of flips, and all sets of $k$-triangulations whose corresponding points belong to a given face of this brick polytope are faces of~$\Delta_n^k$. However, we prove  that our point configuration cannot be a projection of the polar of a realization of~$\Delta_n^k$.

After the completion of a preliminary version of this paper, Stump pointed out to us his paper~\cite{Stump} which connects the multitriangulations to the type~$A$ subword complexes of Knutson \& Miller~\cite{KnutsonMiller}. The latter can be visually interpreted as sorting networks (see Section~\ref{subsec:subwordComplex}). This opened the perspective of the generalization of brick polytopes to subword complexes on Coxeter groups. This generalization was achieved by Pilaud \& Stump in~\cite{PilaudStump}. This construction yields in particular the generalized associahedra of Hohlweg, Lange \& Thomas~\cite{HohlwegLangeThomas} for certain particular subword complexes described by Ceballos, Labb\'e \& Stump~\cite{CeballosLabbeStump}. In the present paper, we focus on the classical situation of type~$A$, which already reflects the essence of the construction. The only polytope of different type which appears here is the \defn{cyclohedron} via its standard embedding in the associahedron. The vertices of the cyclohedron correspond to centrally symmetric triangulations of a centrally symmetric convex $(2n)$-gon and its edges correspond to centrally symmetric flips between them (\ie either a flip of a centrally symmetric diagonal or a simultaneous flip of a pair of symmetric diagonals). The boundary complex of its polar is (isomorphic to) the refinement poset of centrally symmetric polygonal subdivisions of the $(2n)$-gon. See \fref{fig:polytopalRealizations} (bottom right).

We moreover refer to~\cite{PilaudStump} for further properties of the brick polytope which appeared when generalizing it to Coxeter groups of finite types. They relate in particular the graph of the brick polytope to a quotient of the weak order and the normal fan of the brick polytope to the Coxeter fan.

%%%%%%%%%%%%%%%%%%%

\section{The brick polytope of a sorting network}\label{sec:definitions}

\subsection{Pseudoline arrangements on sorting networks}\label{subsec:networks}

Consider a set of $n$ hori\-zontal lines (called \defn{levels}, and labeled from bottom to top), and place $m$ vertical segments (called \defn{commutators}, and labeled from left to right) joining two consecutive horizontal lines, such that no two commutators have a common endpoint~---~see \eg Figure~\ref{fig:network}. Throughout this paper, we fix such a configuration $\cN$ that we call a \defn{network}. The \defn{bricks} of~$\cN$ are its $m-n+1$~bounded cells. We say that a network is \defn{alternating} when the commutators adjacent to each intermediate level are alternatively located above and below it.

\begin{figure}[ht]
  \capstart
  \centerline{\includegraphics[width=\textwidth]{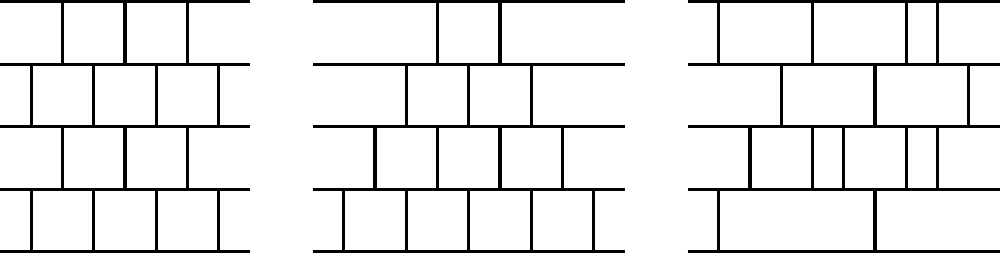}}
  \caption{Three networks with $5$ levels, $14$ commutators and $10$ bricks. The first two are alternating.}
  \label{fig:network}
\end{figure}

A \defn{pseudoline} is an abscissa monotone path on the network $\cN$. A \defn{contact} between two pseudolines is a commutator whose endpoints are contained one in each pseudoline, and a \defn{crossing} between two pseudolines is a commutator traversed by both pseudolines. A \defn{pseudoline arrangement} (with contacts) is a set of $n$ pseudolines supported by $\cN$ such that any two of them have precisely one crossing, some (perhaps zero) contacts, and no other intersection~---~see Figure~\ref{fig:flip}. Observe that in a pseudoline arrangement, the pseudoline which starts at level $\ell$ necessarily ends at level $n+1-\ell$ and goes up at $n-\ell$ crossings and down at $\ell-1$ crossings. Note also that a pseudoline arrangement supported by $\cN$ is completely determined by its ${n \choose 2}$ crossings, or equivalently by its $m-{n \choose 2}$ contacts. Let~$\arr(\cN)$ denote the set of pseudoline arrangements supported by~$\cN$. We say that a network is \defn{sorting} when it supports at least one pseudoline arrangement.

\begin{figure}
  \capstart
  \centerline{\includegraphics[width=\textwidth]{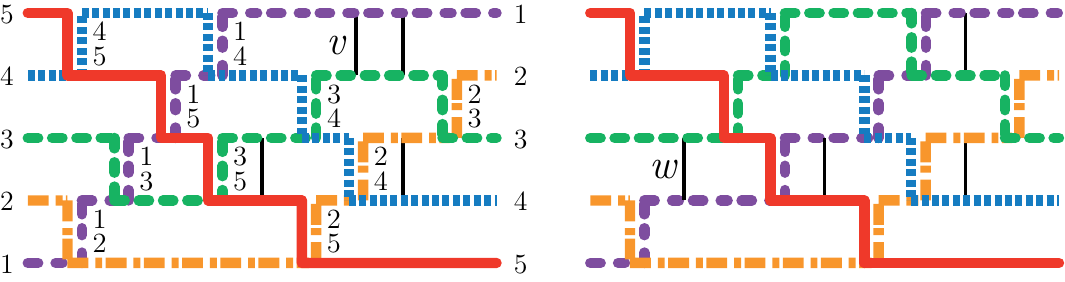}}
  \caption{Two pseudoline arrangements, both supported by the rightmost network~$\cN$ of \fref{fig:network}, and related by a flip. The left one is the greedy pseudoline arrangement~$\Gamma(\cN)$, whose flips are all decreasing. It is obtained by sorting the permutation~$(5,4,3,2,1)$ according to the network~$\cN$.}
  \label{fig:flip}
\end{figure}

\subsection{The graph of flips}\label{subsec:flips}

There is a natural \defn{flip} operation which transforms a pseudoline arrangement supported by $\cN$ into another one by exchanging the position of a contact. More precisely, if $V$ is the set of contacts of a pseudoline arrangement $\Lambda$ supported by~$\cN$, and if $v\in V$ is a contact between two pseudolines of $\Lambda$ which cross at $w$, then $(V\ssm\{v\}) \cup \{w\}$ is the set of contacts of another pseudoline arrangement supported by $\cN$~---~see Figure~\ref{fig:flip}. The \defn{graph of flips}~$G(\cN)$ is the graph whose nodes are the pseudoline arrangements supported by~$\cN$ and whose edges are the flips between them. This graph was studied in~\cite{PilaudPocchiola}, whose first statement is the following result:

\begin{theorem}[{\cite{PilaudPocchiola}}]\label{theo:connect}
The graph of flips $G(\cN)$ of a sorting network~$\cN$ with $n$ levels and $m$ commutators is $\left(m-{n \choose 2}\right)$-regular and connected.
\end{theorem}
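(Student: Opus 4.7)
The plan is to split the proof into two independent assertions: degree-regularity and connectedness of $G(\cN)$.

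For regularity, the argument is pure bookkeeping. Since each unordered pair of pseudolines meets at exactly one crossing, any arrangement $\Lambda \in \arr(\cN)$ uses exactly $\binom{n}{2}$ commutators as crossings and therefore the remaining $m-\binom{n}{2}$ as contacts. At any contact $v$, the two pseudolines in contact also meet at a unique crossing $w$, and I would check that swapping the roles of $v$ and $w$ yields another arrangement in $\arr(\cN)$; this is a purely local verification, since only the two pseudolines through $v$ and $w$ are affected and their interactions with the other pseudolines are untouched. Distinct contacts yield distinct flips, so the degree of $\Lambda$ in $G(\cN)$ equals the number of contacts of $\Lambda$, namely $m - \binom{n}{2}$.

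For connectedness, I would exhibit a canonical arrangement $\Gamma(\cN)$ together with a monovariant that is strictly decreased by a suitably oriented flip available at every non-canonical arrangement. The natural candidate is the \emph{greedy} arrangement obtained by sorting $(n,n-1,\ldots,1)$ through $\cN$: process commutators from left to right and, at each one, insert a crossing whenever the current labels of the two adjacent levels are in decreasing order and a contact otherwise. By construction, in $\Gamma(\cN)$ every contact $v$ has its matching crossing $w$ strictly to the right of $v$, since the two pseudolines still need to exchange their levels later on. I would then call a flip \emph{leftward} if $w$ lies to the left of $v$ and \emph{rightward} otherwise, and use as monovariant the sum of horizontal positions of the contacts; the greedy arrangement minimises this monovariant, and connectedness will follow once one proves that every $\Lambda \neq \Gamma(\cN)$ admits at least one leftward flip.

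The hard part is precisely this last step: the existence of a leftward flip at every non-greedy arrangement. My approach would be a pair-by-pair comparison with $\Gamma(\cN)$. For each pair $\{a,b\}$ of pseudolines, let $c_\Lambda(a,b)$ and $c_\Gamma(a,b)$ denote the positions of their unique crossings in $\Lambda$ and in $\Gamma(\cN)$. Since $\Gamma(\cN)$ crosses pairs as early as possible, one expects $c_\Lambda(a,b) \geq c_\Gamma(a,b)$ for every pair, with strict inequality for at least one. The goal is to select such a pair — presumably one that is extremal for the gap $c_\Lambda(a,b) - c_\Gamma(a,b)$ or for some induction parameter on the network — so that between the two positions the pseudolines $a$ and $b$ must travel on adjacent levels and meet at a contact of $\Lambda$, producing the required leftward flip. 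The delicate point, and the main obstacle I anticipate, is certifying that the selected pair is indeed on adjacent levels at the chosen contact so that the local flip is well defined; this is where the structural property that each pair of pseudolines meets exactly once must be exploited together with the greediness of $\Gamma(\cN)$.
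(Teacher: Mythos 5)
Your regularity argument is correct and matches the paper's, and your connectedness strategy --- single out the greedy arrangement $\Gamma(\cN)$, use the sum of contact positions as a potential, and show every non-greedy arrangement has a flip pointing toward $\Gamma(\cN)$ --- is also the paper's. But you have the orientation exactly backward, and this is a genuine error. Since $\Gamma(\cN)$ is built by inserting crossings \emph{as early as possible}, whenever two pseudolines meet at a contact of $\Gamma(\cN)$ they have \emph{already} crossed earlier; the matching crossing $w$ lies strictly to the \emph{left} of the contact $v$, not to the right. Your justification (``the two pseudolines still need to exchange their levels later on'') describes the opposite, lazy arrangement, which is the \emph{sink} of the decreasing-flip orientation, not its source. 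Consequently $\Gamma(\cN)$ \emph{maximizes} your monovariant rather than minimizing it, every flip out of $\Gamma(\cN)$ decreases it, and the lemma you actually need is that every $\Lambda \neq \Gamma(\cN)$ admits an \emph{increasing} flip --- a contact whose crossing lies to its right. Your own pair-by-pair sketch already points in that direction: since $c_\Lambda(a,b) \geq c_\Gamma(a,b)$, a contact of $\Lambda$ between $a$ and $b$ located between these two positions sits to the left of $c_\Lambda(a,b)$, so flipping it moves the contact \emph{rightward}, contrary to your ``leftward'' claim.

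The step you flag as the main obstacle --- certifying that some suitable pair is on adjacent levels at the needed contact, so the flip exists --- is exactly the nontrivial content of the theorem, and the paper does not actually prove it here: it observes that decreasing flips strictly decrease the potential (hence the orientation is acyclic) and cites Pilaud and Pocchiola for the uniqueness of the source. Your sketch is therefore comparable in completeness to the paper's, but the reversed orientation is a substantive error that would surface as a contradiction once you tried to carry through the extremal-pair argument, so it must be fixed before the rest can go through.
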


Regularity of the graph of flips is obvious since every contact induces a flip. For the connectivity, define a flip to be \defn{decreasing} if the added contact lies on the left of the removed contact. The oriented graph of decreasing flips is clearly acyclic and is proved to have a unique source in~\cite{PilaudPocchiola} (and thus, to be connected). This source is called the \defn{greedy pseudoline arrangement} supported by~$\cN$ and is denoted by~$\Gamma(\cN)$. It is characterized by the property that any of its contacts is located to the right of its corresponding crossing. It can be computed by sorting the permutation~$(n,n-1,\dots,2,1)$ according to the sorting network~$\cN$~---~see \fref{fig:flip}~(left). We will use this particular pseudoline arrangement later. We refer to~\cite{PilaudPocchiola} for further details.

For any given subset~$\gamma$ of the commutators of~$\cN$, we denote by $\arr(\cN|\gamma)$ the set of pseudoline arrangements supported by~$\cN$ and whose set of contacts contains~$\gamma$. The arrangements of $\arr(\cN|\gamma)$ are in obvious correspondence with that of $\arr(\cN\ssm\gamma)$, where~$\cN\ssm\gamma$ denotes the network obtained by erasing the commutators of~$\gamma$ in~$\cN$. In particular, the subgraph of~$G(\cN)$ induced by~$\arr(\cN|\gamma)$ is isomorphic to $G(\cN\ssm\gamma)$, and thus Theorem~\ref{theo:connect} ensures that this subgraph is connected for every~$\gamma$.

More generally, let~$\Delta(\cN)$ denote the simplicial complex of all sets of commutators of~$\cN$ contained in the set of contacts of a pseudoline arrangement supported by~$\cN$. In other words, a set~$\gamma$ of commutators of~$\cN$ is a face of $\Delta(\cN)$ if and only if the network $\cN\ssm\gamma$ is still sorting. This complex is pure of dimension~$m-{n \choose 2}-1$, its maximal cells correspond to pseudoline arrangements supported by~$\cN$ and its ridge graph is the graph of flips~$G(\cN)$. The previous connectedness properties ensure that~$\Delta(\cN)$ is an abstract polytope~\cite{Schulte}, and it is even a combinatorial sphere (see Corollary~\ref{coro:shellable} and the discussion in Section~\ref{subsec:multiassociahedron}). These properties motivate the following question:

\begin{question}\label{qu:polytope}
Is~$\Delta(\cN)$ the boundary complex of a $\left(m-{n \choose 2}\right)$-dimensional simplicial polytope?
\end{question}

In this article, we construct a polytope whose graph is a subgraph of~$G(\cN)$, and which combinatorially looks like ``a projection of'' the dual complex of $\Delta(\cN)$. More precisely, we associate a vector $\omega(\Lambda) \in \R^n$ to each~$\Lambda\in\arr(\cN)$, and we consider the convex hull~${\Omega(\cN) \eqdef \conv\set{\omega(\Lambda)}{\Lambda\in\arr(\cN)} \subset\R^n}$ of all these vectors. The resulting polytope has the property that for every face~$F$ of~$\Omega(\cN)$ there is a set~$\gamma$ of commutators of~$\cN$ such that $\arr(\cN|\gamma) = \set{\Lambda\in\arr(\cN)}{\omega(\Lambda)\in F}$. In particular, when the dimension of~$\Omega(\cN)$ is $m-{n \choose 2}$, our construction answers Question~\ref{qu:polytope} in the affirmative. The relationship between our construction and Question~\ref{qu:polytope} is discussed in more details in Section~\ref{subsec:multiassociahedron}.

\subsection{Subword complexes on finite Coxeter groups}
\label{subsec:subwordComplex}

Before presenting our construction of the brick polytope of a sorting network, we make a little detour to connect the abovementioned simplicial complexes $\Delta(\cN)$ with the subword complexes of Knutson \& Miller~\cite{KnutsonMiller}. 

Let $(W,S)$ be a finite Coxeter system, that is, $W$ is a finite reflection group and $S$ is a set of simple reflections minimally generating $W$. See for example~\cite{Humphreys} for background. Let $Q$ be a word on the alphabet $S$ and let $\rho$ be an element of~$W$. The \defn{subword complex} $\Delta(Q,\rho)$ is the pure simplicial complex of subwords of~$Q$ whose complements contain a reduced expression of $\rho$~\cite{KnutsonMiller}. The vertices of this simplicial complex are labeled by the positions in the word $Q$ (note that two positions are different even if the letters of $Q$ at these positions coincide), and its facets are the complements of the reduced expressions of $\rho$ in the word $Q$.

There is a straightforward combinatorial isomorphism between:
\begin{enumerate}[(i)]
\item the simplicial complex $\Delta(\cN)$ discussed in the previous section, where $\cN$ is a sorting network on $n$ levels and $m$ commutators; and
\item the subword complex $\Delta(Q,w_\circ)$, where the underlying Coxeter group $W$ is the symmetric group $\fS_n$ on $n$ elements, the simple system $S$ is the set of adjacent transpositions $\tau_i \eqdef (i,i+1)$, the word $Q = \tau_{i_1}\tau_{i_2}\dots\tau_{i_m}$ is formed according to the positions of the commutators of~$\cN$ --- the $j$\textsuperscript{th} leftmost commutator of $\cN$ lies between the $i_j$\textsuperscript{th} and $(i_j+1)$\textsuperscript{th} levels of $\cN$ ---, and the permutation ${w_\circ = [n,n-1,\dots,2,1]}$ is the longest element of $\fS_n$ --- its reduced expressions on $S$ all have the maximal length ${n \choose 2}$.
\end{enumerate}

Since Pilaud \& Pocchiola~\cite{PilaudPocchiola} were not aware of the definition of the subword complex, they studied the simplicial complexes $\Delta(\cN)$ independently and rediscovered some relevant properties which hold for any subword complex. The connection between multitriangulations and subword complexes was first done by Stump~\cite{Stump}, providing the powerful toolbox of Coxeter combinatorics to the playground.

In particular, Knutson \& Miller prove in~\cite{KnutsonMiller} that the subword complex $\Delta(Q,\rho)$ is either a combinatorial sphere or a combinatorial ball, depending on whether the Demazure product of $Q$ equals $\rho$ or not. The interested reader can refer to their article for details on this property and for other known properties on subword complexes. In the conclusion of their article, Question~6.4 asks in particular whether any spherical subword complex is the boundary complex of a convex simplicial polytope, which is a generalized version of Question~\ref{qu:polytope} stated above.

Throughout our article, we only consider subword complexes on the Coxeter system $(\fS_n, \set{\tau_i}{i\in[n-1]})$ and with $\rho=w_\circ$. However, we want to mention that Pilaud \& Stump~\cite{PilaudStump} extended the construction of this paper to any subword complex on any Coxeter system. This generalized construction yields in particular the generalized associahedra of Hohlweg, Lange \& Thomas~\cite{HohlwegLangeThomas} for certain particular subword complexes described by Ceballos, Labb\'e \& Stump~\cite{CeballosLabbeStump}.

\subsection{The brick polytope}

The subject of this paper is the following polytope:

\begin{definition}
Let $\cN$ be a sorting network with $n$ levels. The \defn{brick vector} of a pseudoline arrangement $\Lambda$ supported by $\cN$ is the vector $\omega(\Lambda) \in \R^n$ whose $i$\textsuperscript{th} coordinate is the number of bricks of~$\cN$ located below the $i$\textsuperscript{th} pseudoline of $\Lambda$ (the one which starts at level $i$ and finishes at level $n+1-i$). The \defn{brick polytope} $\Omega(\cN) \subset \R^n$ of the sorting network $\cN$ is the convex hull of the brick vectors of all pseudoline arrangements supported by $\cN$:
$$\Omega(\cN) \eqdef \conv\set{\omega(\Lambda)}{\Lambda\in\arr(\cN)} \subset\R^n.$$
\end{definition}

This article aims to describe the combinatorial properties of this polytope in terms of the properties of the supporting network. In Section~\ref{sec:structure}, we provide a characterization of the pseudoline arrangements supported by~$\cN$ whose brick vectors are vertices of the brick polytope~$\Omega(\cN)$, from which we derive a combinatorial description of the faces of the brick polytope. We also provide a natural decomposition of~$\Omega(\cN)$ into a Minkowski sum of simpler polytopes. In Section~\ref{sec:associahedra}, we recall the duality between the triangulations of a convex polygon and the pseudoline arrangements supported by certain networks~\cite{PilaudPocchiola}, whose brick polytopes coincide with Hohlweg \& Lange's realizations of the associahedron~\cite{HohlwegLange}. We finally discuss in Section~\ref{sec:multi} the properties of the brick polytopes of more general networks which support pseudoline arrangements corresponding to multitriangulations of convex polygons~\cite{PilaudSantos,PilaudPocchiola}.

We start by observing that the brick polytope is not full dimensional. Define the \defn{depth} of a brick of $\cN$ to be the number of levels located above it, and let $D(\cN)$ be the sum of the depths of all the bricks of~$\cN$. Since any pseudoline arrangement supported by $\cN$ covers each brick as many times as its depth, all brick vectors are contained in the following hyperplane:

\begin{lemma}\label{lem:depth}
The brick polytope~$\Omega(\cN)\subset\R^n$ is contained in the hyperplane of equation $\sum_{i=1}^n x_i = D(\cN)$.
\end{lemma}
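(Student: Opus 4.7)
The plan is to establish the lemma by a simple double-counting argument. The key observation is that, at any vertical cross-section of the network that avoids commutators, the $n$ pseudolines of any arrangement $\Lambda \in \arr(\cN)$ must occupy exactly the $n$ distinct levels, since they are abscissa-monotone and can only exchange levels at commutators. In particular, for a brick $b$ situated between levels $\ell$ and $\ell+1$, a vertical line through the interior of $b$ meets the pseudolines of $\Lambda$ at the $n$ levels, so the pseudolines whose trace lies above $b$ are precisely those occupying levels $\ell+1,\dots,n$ at that abscissa. Thus the number of pseudolines of $\Lambda$ passing above $b$ equals $n-\ell$, which is exactly the depth of~$b$.

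From this, I would compute $\sum_{i=1}^n \omega(\Lambda)_i$ by exchanging the order of summation. By definition, $\omega(\Lambda)_i$ counts the bricks lying below the $i$\textsuperscript{th} pseudoline of $\Lambda$, so
\[
\sum_{i=1}^n \omega(\Lambda)_i \;=\; \sum_{i=1}^n \bigl|\{ b \text{ brick of } \cN \,:\, b \text{ lies below the } i\text{\textsuperscript{th}\ pseudoline of } \Lambda\}\bigr| \;=\; \sum_{b} \bigl|\{ i \,:\, i\text{\textsuperscript{th}\ pseudoline of }\Lambda\text{ passes above } b\}\bigr|.
\]
By the observation above, the inner cardinality equals the depth of $b$, independently of the chosen arrangement $\Lambda$. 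Summing over all bricks gives $\sum_i \omega(\Lambda)_i = D(\cN)$, so every brick vector lies in the stated hyperplane, and hence so does their convex hull.

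The only subtle point, and the one I would want to state carefully, is the claim that at any non-commutator abscissa the pseudolines of $\Lambda$ occupy all $n$ levels. This follows directly from the definition of a pseudoline arrangement supported by $\cN$: the pseudolines are abscissa-monotone paths on the network, they start at the $n$ distinct levels on the left boundary, and they only change level by traversing commutators (which are isolated vertical segments). Hence between consecutive commutators the multiset of current levels is preserved, and at initialization it is $\{1,\dots,n\}$. This is a routine verification from the definitions in Section~\ref{subsec:networks} and is really the only piece of content in the proof; the rest is just the double counting above.
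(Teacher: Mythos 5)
Your proof is correct and takes essentially the same approach as the paper, which dispatches the lemma in one sentence by observing that any pseudoline arrangement covers each brick as many times as its depth; your write-up simply makes explicit the level-occupancy argument behind that observation and the double counting that follows.
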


The dimension of~$\Omega(\cN)$ is thus at most $n-1$, but could be smaller. We obtain the dimension of~$\Omega(\cN)$ in Corollary~\ref{coro:dim}.

We can also describe immediately the action of the vertical and horizontal reflections of the network on the brick polytope. 
The brick polytope of the network~$v(\cN)$ obtained by reflecting~$\cN$ through the vertical axis is the image of~$\Omega(\cN)$ under the affine transformation $(x_1,\dots,x_n) \mapsto (x_n,\dots,x_1)$. Similarly, the brick polytope of the network~$h(\cN)$ obtained by reflecting~$\cN$ through the horizontal axis is the image of~$\Omega(\cN)$ under the affine transformation $(x_1,\dots,x_n) \mapsto (m-n+1)\one - (x_n,\dots,x_1)$.

\subsection{Examples}\label{subsec:examples}

Before going on, we present some examples which will illustrate our results throughout the paper. Further motivating examples will be studied in Sections~\ref{sec:associahedra} and~\ref{sec:multi}.

\begin{example}[Reduced networks]\label{exm:reduced}
A sorting network~$\cN$ with~$n$ levels and~${m={n \choose 2}}$ commutators supports a unique pseudoline arrangement. Consequently, the graph of flips~$G(\cN)$, the simplicial complex~$\Delta(\cN)$ and the brick polytope~$\Omega(\cN)$ are all reduced to a single point. Such a network is said to be \defn{reduced}.
\end{example}

\begin{example}[$2$-level networks]
Consider the network $\cX_m$ formed by two levels related by $m$ commutators. We obtain a pseudoline arrangement by choosing any of these commutators as the unique crossing between two pseudolines supported by~$\cX_m$. Thus, the graph of flips~$G(\cX_m)$ is the complete graph on $m$ vertices, and the simplicial complex~$\Delta(\cX_m)$ is a $(m-1)$-dimensional simplex. The brick polytope $\Omega(\cX_m)$ is, however, a segment.
\begin{figure}[ht]
  \capstart
  \centerline{\includegraphics[width=.8\textwidth]{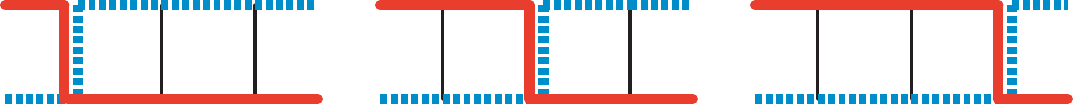}}
  \caption{The three pseudoline arrangements supported by the network~$\cX_3$ with two levels and three commutators.}
  \label{fig:2levels}
\end{figure}

\noindent
The brick vector of the pseudoline arrangement whose crossing is the $i$\textsuperscript{th} commutator of~$\cX_m$ is the vector $(m-i,i-1)$. Thus, the brick polytope~$\Omega(\cX_m)$ is the segment from $(m-1,0)$ to $(0,m-1)$. Its endpoints are the brick vectors of the pseudoline arrangements whose crossings are respectively the first and the last commutator of~$\cX_m$. The former is the source (\ie the greedy pseudoline arrangement) and the latter is the sink in the oriented graph of decreasing flips. The polytope $\Omega(\cX_m)$ is contained in the hyperplane of equation $x+y=m-1$.
\end{example}

\begin{example}[$3$-level alternating networks]\label{exm:3levels}
Let~$m \ge 3$ be an odd integer. Consider the network~$\cY_m$ formed by 3 levels related by $m$ alternating commutators. Any choice of~$3$ alternating crossings provides a pseudoline arrangement supported by~$\cY_m$. Consequently, $\cY_m$ supports precisely $\frac{1}{24}(m-1)m(m+1)$ pseudoline arrangements. The brick polytope~$\Omega(\cY_m)$ is a single point when $m=3$, a pentagon when $m=5$, and a hexagon for any $m \ge 7$. We have represented in \fref{fig:3levels} the projection of~$\Omega(\cY_{11})$ on the first and third coordinates plane, in such a way that the transformation $(x_1,x_3) \mapsto (x_3,x_1)$ appears as a vertical reflection.

\begin{figure}
  \capstart
  \centerline{\includegraphics[width=.85\textwidth]{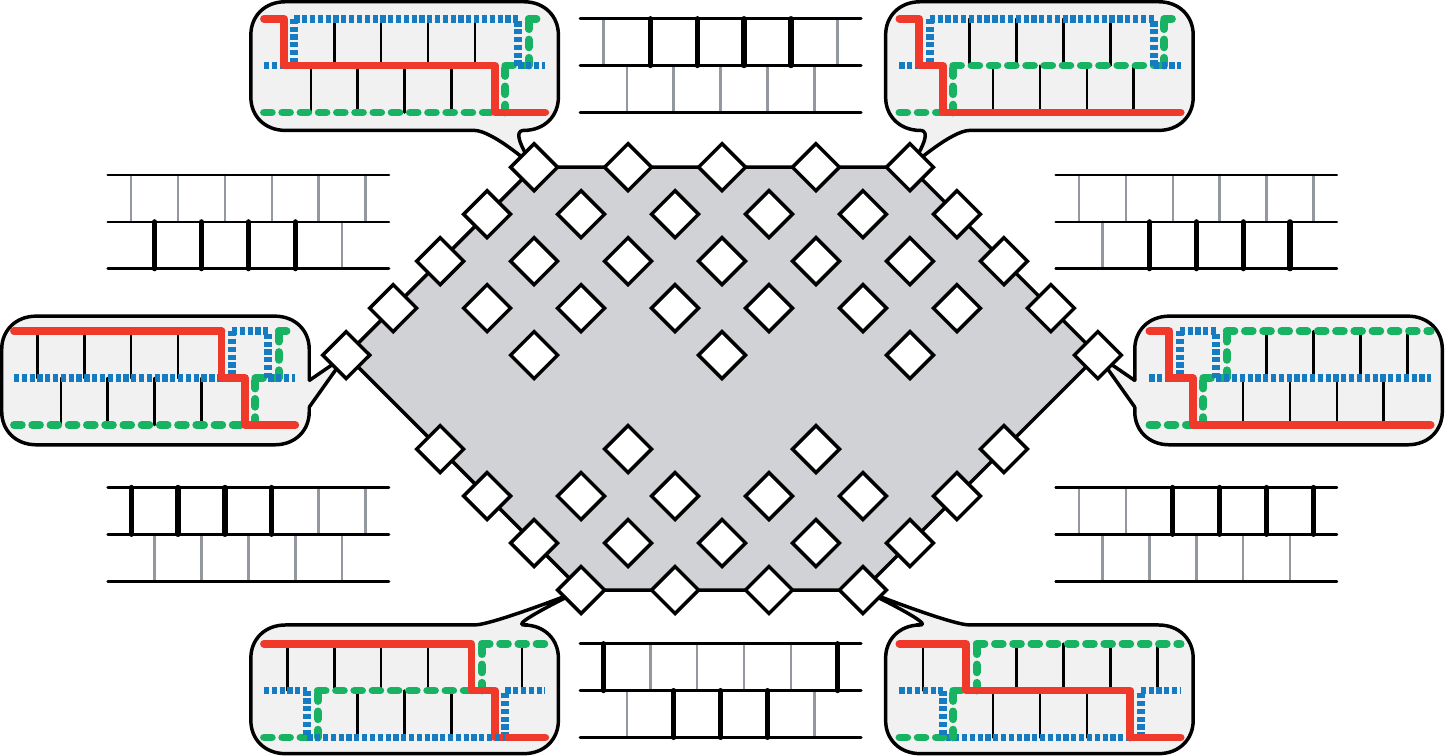}}
  \caption{The brick polytope~$\Omega(\cY_{11})$ of a $3$-level alternating network, projected on the first and third coordinates plane. Next to each vertex is drawn the corresponding pseudoline arrangement, and next to each edge is drawn the corresponding set of contacts.}
  \label{fig:3levels}
\end{figure}
\end{example}

\begin{example}[Duplicated networks]\label{exm:duplicated1}
Consider a reduced network~$\cN$ with $n$ levels and ${n \choose 2}$ commutators. For any distinct~$i,j\in[n]$, we labeled by $\{i,j\}$ the commutator of~$\cN$ where the $i$\textsuperscript{th} and $j$\textsuperscript{th} pseudolines of the unique pseudoline arrangement supported by~$\cN$ cross. 
Let $\Gamma$ be a connected graph on~$[n]$. We define $\cZ(\Gamma)$ to be the network with $n$ levels and $m={n \choose 2} + |\Gamma|$ commutators obtained from~$\cN$ by duplicating the commutators labeled by the edges of~$\Gamma$~---~see \fref{fig:duplicated}. We say that~$\cZ(\Gamma)$ is a \defn{duplicated network}. Observe that a pseudoline arrangement supported by~$\cZ(\Gamma)$ has a crossing for each commutator which has not been duplicated, and a crossing and a contact among each pair of duplicated commutators. Thus, $\cZ(\Gamma)$ supports precisely $2^{|\Gamma|}$ different pseudoline arrangements, the graph of flips~$G(\cZ(\Gamma))$ is the graph of the $|\Gamma|$-dimensional cube, and more generally, the simplicial complex $\Delta(\cZ(\Gamma))$ is the boundary complex of the $|\Gamma|$-dimensional cross-polytope. As an application of the results of Section~\ref{sec:structure}, we will see that the brick polytope of the duplicated network~$\cZ(\Gamma)$ is a graphical zonotope~---~see Examples~\ref{exm:duplicated2},~\ref{exm:duplicated3},~\ref{exm:duplicated4}, and~\ref{exm:duplicated5}. In particular, when $\Gamma$ is complete we obtain the permutahedron, while when~$\Gamma$ is a tree, we obtain a cube.

\begin{figure}[ht]
  \capstart
  \centerline{\includegraphics[scale=.48]{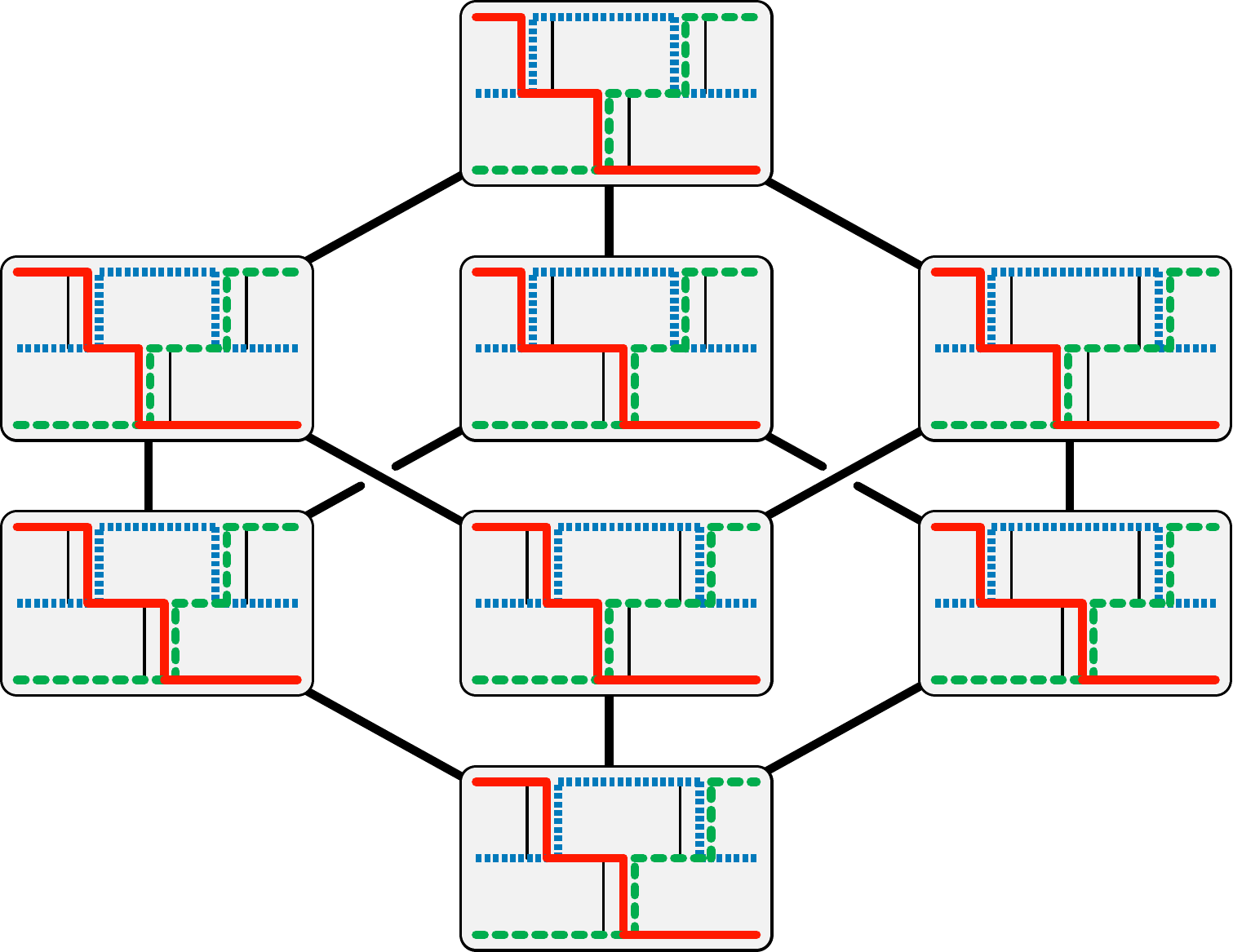}}
  \caption{The graph of flips of the duplicated network $\cZ(K_3)$.}
  \label{fig:duplicated}
\end{figure}
\end{example}

%%%%%%%%%%%%%%%%%%%

\section{Combinatorial description of the brick polytope}\label{sec:structure}

In this section, we characterize the vertices and describe the faces of the brick polytope $\Omega(\cN)$. For this purpose, we study the cone of the brick polytope~$\Omega(\cN)$ at the brick vector of a given pseudoline arrangement supported by~$\cN$. Our main tool is the incidence configuration of the contact graph of a pseudoline arrangement, which we define next.

\subsection{The contact graph of a pseudoline arrangement}

Let $\cN$ be a sorting network with~$n$ levels and~$m$ commutators, and let~$\Lambda$ be a pseudoline arrangement supported by~$\cN$.

\begin{definition}
The \defn{contact graph} of $\Lambda$ is the directed multigraph $\Lambda^\contact$ with a node for each pseudoline of $\Lambda$ and an arc for each contact of $\Lambda$ oriented from the pseudoline passing above the contact to the pseudoline passing below it.
\end{definition}

\begin{figure}[h]
  \capstart
  \centerline{\includegraphics[width=\textwidth]{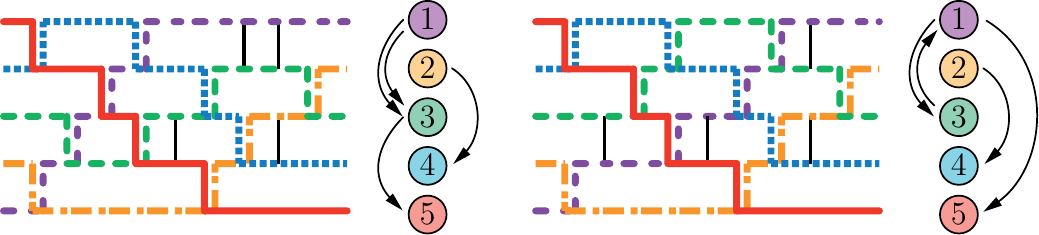}}
  \caption{The contact graphs of the pseudoline arrangements of \fref{fig:flip}. The connected components are preserved by the flip.}
  \label{fig:contact}
\end{figure}

The nodes of the contact graph come naturally labeled by~$[n]$: we label by~$\ell$ the node corresponding to the pseudoline of~$\Lambda$ which starts at level~$\ell$ and finishes at level~$n+1-\ell$. With this additional labeling, the contact graph provides enough information to characterize its pseudoline arrangement:

\begin{lemma}\label{lem:contactgraph}
Let~$\cN$ be a network and~$\Lambda^\contact$ be a graph on~$[n]$. If $\Lambda^\contact$ is the contact graph of a pseudoline arrangement~$\Lambda$ supported by~$\cN$, then~$\Lambda$ can be reconstructed from~$\Lambda^\contact$ and~$\cN$.
\end{lemma}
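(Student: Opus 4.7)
The plan is to reconstruct $\Lambda$ from $\Lambda^\contact$ and $\cN$ by a left-to-right sweep across the commutators of $\cN$. I would maintain a permutation $\pi$ of~$[n]$ that records, at the current horizontal position, which pseudoline of~$\Lambda$ occupies each level; initially $\pi$ is the identity, since pseudoline~$i$ starts at level~$i$. At each commutator~$c$ between levels~$\ell$ and~$\ell+1$, set $u \eqdef \pi(\ell+1)$ and $l \eqdef \pi(\ell)$, and declare~$c$ to be a contact if and only if $\Lambda^\contact$ still contains an arc from~$u$ to~$l$ not yet consumed by an earlier commutator of the sweep; in that case, consume one such arc and leave $\pi$ unchanged. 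Otherwise, declare~$c$ a crossing and swap the values $\pi(\ell)$ and $\pi(\ell+1)$. Since the procedure is fully determined by $\cN$ and $\Lambda^\contact$, it suffices to show that its output agrees with the true~$\Lambda$.

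To verify correctness, I would analyze the meetings of each pair of pseudolines separately. Fix $i<j$ and let $c_1 < c_2 < \dots < c_r$ be the commutators at which pseudolines~$i$ and~$j$ lie on adjacent levels during~$\Lambda$. Exactly one of them, say $c_k$, is the crossing between~$i$ and~$j$, while the remaining $r-1$ are contacts. At every~$c_s$ with $s<k$, pseudoline~$j$ still lies above~$i$ and so this contact contributes an arc $j \to i$ to $\Lambda^\contact$; at every~$c_s$ with $s>k$, pseudoline~$i$ lies above~$j$ and the contact contributes an arc $i \to j$. Hence $\Lambda^\contact$ contains exactly $k-1$ arcs from~$j$ to~$i$ and exactly $r-k$ arcs from~$i$ to~$j$ attributable to the pair $\{i,j\}$.

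These meetings therefore occur in the rigid pattern: $k-1$ contacts with~$j$ above, then the crossing, then $r-k$ contacts with~$i$ above. The greedy rule of the sweep is thus forced to make the correct choice at each~$c_s$: at $c_1, \dots, c_{k-1}$ an unused $j \to i$ arc is still available and the sweep declares a contact; at $c_k$ the $j \to i$ arcs have been exhausted, so the sweep declares a crossing and swaps~$i$ and~$j$; at $c_{k+1}, \dots, c_r$ the current upper pseudoline is~$i$ and an unused $i \to j$ arc is present, so the sweep again declares a contact. Since this analysis applies independently to every pair~$\{i,j\}$, the sweep correctly classifies every commutator of~$\cN$ and thereby reconstructs~$\Lambda$.

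The main subtlety is the compatibility between the greedy use of arcs and the true assignment of arcs to contacts: the lemma only assumes $\Lambda^\contact$ as an abstract directed multigraph, with no a priori labeling of its arcs by commutators of~$\cN$. The pair-by-pair analysis above is precisely what resolves this point, by showing that the arcs attributable to a fixed pair~$\{i,j\}$ line up with the corresponding meetings in a unique monotone way, so the greedy matching necessarily coincides with the true one irrespective of the order in which individual arcs are chosen.
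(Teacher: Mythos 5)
Your sweep algorithm is exactly the paper's reconstruction procedure: proceed left to right, declare a commutator a contact when an arc from the upper to the lower pseudoline is still available in $\Lambda^\contact$, otherwise declare it a crossing. Your pair-by-pair arc-counting argument is a more explicit version of the paper's one-line justification (and your arc directions are the correct ones), so the two proofs are essentially the same.
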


\begin{proof}
To obtain a pseudoline arrangement from its contact graph~$\Lambda^\contact$, we sort the permutation~$(n,n-1,\dots,2,1)$ on~$\cN$ according to~$\Lambda^\contact$. We sweep the network from left to right, and start to draw the $\ell$\textsuperscript{th} pseudoline at level~$\ell$. When we reach a commutator of~$\cN$ with pseudoline~$i$ above and pseudoline~$j$ below, 
\begin{itemize}
\item if there remains an arc $(i,j)$ in~$\Lambda^\contact$, we insert a contact in place of our commutator and delete an arc~$(i,j)$ from~$\Lambda^\contact$;
\item otherwise, we insert a crossing in place of our commutator: the indices $i$ and $j$ of the permutation get sorted at this crossing.%\qedhere
\end{itemize}
This procedures is correct since the contacts between two pseudolines $i<j$ are all directed from $i$ to $j$ before their crossing and from $j$ to $i$ after their crossing.
\end{proof}

\begin{example}\label{exm:contactgreedy}
We already mentioned a relevant example of the sorting procedure of the previous proof: the greedy pseudoline arrangement~$\Gamma(\cN)$ is obtained by sorting the permutation~$(n,n-1,\dots,2,1)$ on~$\cN$ and inserting the crossings as soon as possible. In other words any arc of the contact graph of the greedy pseudoline arrangement is sorted (\ie of the form~$(i,j)$ with~$i<j$)~---~see \fref{fig:contact} (left).
\end{example}

\begin{example}[Duplicated networks, continued]\label{exm:duplicated2}
For a connected graph $\Gamma$, consider the duplicated network $\cZ(\Gamma)$ defined in Example~\ref{exm:duplicated1}. A pseudoline arrangement supported by~$\cZ(\Gamma)$ has one contact among each pair of duplicated commutators, and thus its contact graph is an oriented copy of~$\Gamma$. Reciprocally, any orientation on~$\Gamma$ is the contact graph of a pseudoline arrangement: this follows from Lemma~\ref{lem:contactgraph} since~$\cZ(\Gamma)$ supports~$2^{|\Gamma|}$ pseudoline arrangements, but one can also easily reconstruct the pseudoline arrangement whose contact graph is a given orientation on~$\Gamma$.

Note that the contact graphs of the pseudoline arrangements supported by a given network have in general distinct underlying undirected graphs (see  \fref{fig:contact}).
\end{example}

\begin{remark}
In fact, any labeled directed multigraph arises as the contact graph of pseudoline arrangement on a certain sorting network. Indeed, consider a labeled directed multigraph~$G$ on $n$ vertices. Consider the unique pseudoline arrangement~$\Lambda$ supported by a reduced network~$\cN$ with $n$ levels and ${n \choose 2}$ commutators. For any directed edge ${(i,j)\in G}$ with $i<j$ (resp.~with $i>j$), insert a new commutator immediately to the right (resp.~left) of the crossing between the $i$\textsuperscript{th} and $j$\textsuperscript{th} pseudolines of $\Lambda$. Then $G$ is precisely the contact graph $\Lambda^\contact$ of $\Lambda$ (seen as a pseudoline arrangement supported by the resulting network).
\end{remark}

Let $\Lambda$ and $\Lambda'$ denote two pseudoline arrangements supported by~$\cN$ and related by a flip involving their $i$\textsuperscript{th} and $j$\textsuperscript{th} pseudolines~---~see \fref{fig:contact} for an example. Then the directed multigraphs obtained by merging the vertices $i$ and $j$ in the contact graphs $\Lambda^\contact$ and $\Lambda'^\contact$ coincide. In particular, a flip preserves the connected components of the contact graph. Since the flip graph~$G(\cN)$ is connected (Theorem~\ref{theo:connect}), this implies the following result:

\begin{lemma}\label{lem:connectcomp}
The contact graphs of all pseudoline arrangements supported by~$\cN$ have the same connected components.
\end{lemma}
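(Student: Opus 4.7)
The plan is to combine the local observation made in the paragraph preceding the lemma with the connectedness of the flip graph $G(\cN)$ guaranteed by Theorem~\ref{theo:connect}.

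The first step is to show that a single flip preserves the partition of $[n]$ into connected components of the contact graph. Let $\Lambda, \Lambda' \in \arr(\cN)$ be related by a flip involving their $i$\textsuperscript{th} and $j$\textsuperscript{th} pseudolines: a contact at a commutator $v$ is replaced by a contact at the commutator $w$ where these pseudolines used to cross. As observed just before the lemma, merging the vertices $i$ and $j$ in $\Lambda^\contact$ and in $\Lambda'^\contact$ yields the same multigraph, since all other arcs are indexed by contacts common to $\Lambda$ and $\Lambda'$ between the same pair of pseudolines in both arrangements. Consequently, the connected components of $\Lambda^\contact$ and $\Lambda'^\contact$ can only differ in whether $i$ and $j$ share a component. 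But both contact graphs contain an arc between $i$ and $j$ (the arc corresponding to $v$ in $\Lambda^\contact$, and the arc corresponding to $w$ in $\Lambda'^\contact$), so $i$ and $j$ always belong to the same component. Hence the two partitions into connected components coincide.

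For the second step, let $\Lambda, \Lambda'$ be arbitrary elements of $\arr(\cN)$. By Theorem~\ref{theo:connect}, there exists a sequence $\Lambda = \Lambda_0, \Lambda_1, \dots, \Lambda_r = \Lambda'$ in $\arr(\cN)$ such that consecutive arrangements are related by a flip. Applying the first step iteratively, the partitions induced by the connected components of $\Lambda_0^\contact, \Lambda_1^\contact, \dots, \Lambda_r^\contact$ all coincide, which proves the claim.

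The only potential obstacle is the careful verification that a flip is purely local in the sense asserted above, namely that every arc of the contact graph not between $i$ and $j$ is common to $\Lambda^\contact$ and $\Lambda'^\contact$. This follows directly from the definition recalled in Section~\ref{subsec:flips}: the flip only reassigns the roles of the two commutators $v$ and $w$ (one contact becomes a crossing and vice versa), leaves every other commutator in its state, and therefore preserves both the set of ``other'' contacts and the pair of pseudolines meeting at each of them.
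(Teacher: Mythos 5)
Your proof is correct and follows essentially the same route as the paper: observe that a flip at pseudolines $i$ and $j$ yields the same multigraph after merging $i$ and $j$, conclude that a single flip preserves the component partition, and propagate this along a path in the flip graph using Theorem~\ref{theo:connect}. The only difference is that you make explicit a small step the paper leaves implicit, namely that $i$ and $j$ lie in the same component of both $\Lambda^\contact$ and $\Lambda'^\contact$ (since each retains an arc between them, via $v$ and $w$ respectively), which is needed to pass from the coincidence of the quotient multigraphs to the coincidence of the component partitions.
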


We call a sorting network \defn{reducible} (resp.~\defn{irreducible}) when the contact graphs of the pseudoline arrangements it supports are disconnected (resp. connected).

Our next statement describes the structure of the simplicial complex~$\Delta(\cN)$ and of the brick polytope~$\Omega(\cN)$ associated to a reducible sorting network~$\cN$. To formalize it, we need the following definition. If $\cN$ is a network and $\Theta$ is a set of disjoint abscissa monotone curves supported by~$\cN$, the \defn{restriction} of~$\cN$ to~$\Theta$ is the network obtained by keeping only the curves of $\Theta$ and the commutators between them, and by stretching the curves of $\Theta$. In other words, it has $|\Theta|$ levels and a commutator between its $i$\textsuperscript{th} and $(i+1)$\textsuperscript{th} levels for each commutator of $\cN$ joining the $i$\textsuperscript{th} and $(i+1)$\textsuperscript{th} curves of $\Theta$~---~see \fref{fig:restriction} (left). 

\begin{figure}
  \capstart
  \centerline{\includegraphics[width=.9\textwidth]{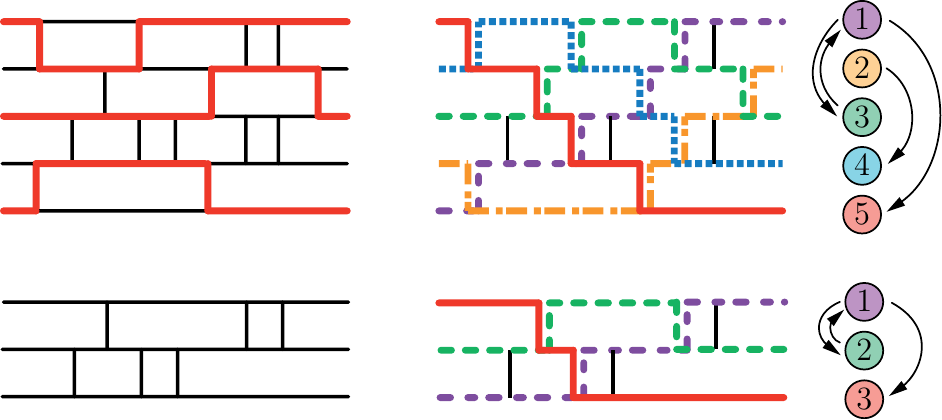}}
  \caption{A sorting network and a pseudoline arrangement covering it (top). Their restriction to the connected component $\{1,3,5\}$ of the contact graph (bottom).}
  \label{fig:restriction}
\end{figure}

It makes sense to speak of the restriction~$\cN(U)$ of~$\cN$ to a connected component~$U$ of the contact graphs of the pseudoline arrangements supported by~$\cN$. Indeed, if~$\Lambda$ is supported by~$\cN$, the restriction of~$\cN$ to the levels of the subarrangement formed by the pseudolines of~$\Lambda$ labeled by~$U$ does not depend on the choice of~$\Lambda$~---~see \fref{fig:restriction} (right). Furthermore, there is an obvious correspondence between the pseudoline arrangements supported by~$\cN(U)$ and the subarrangements of the arrangements supported by~$\cN$ formed by their pseudolines in~$U$. In particular,~$\cN(U)$ is an irreducible sorting network; we say that it is an \defn{irreducible component} of~$\cN$.

\begin{proposition}\label{prop:disconnected}
Let $\cN$ be a sorting network whose irreducible components are $\cN_1,\dots,\cN_p$. Then the simplicial complex~$\Delta(\cN)$ is isomorphic to the join of the simplicial complexes~$\Delta(\cN_1),\dots,\Delta(\cN_p)$ and the brick polytope~$\Omega(\cN)$ is a translate of the product of the brick polytopes~$\Omega(\cN_1),\dots,\Omega(\cN_p)$.
\end{proposition}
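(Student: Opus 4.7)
The plan is to exploit that the non-flippable commutators of~$\cN$ are exactly those joining pseudolines of different components. Let $U_1,\dots,U_p$ denote the (common) vertex sets of the contact graphs' connected components, provided by Lemma~\ref{lem:connectcomp}. Any commutator of $\cN$ joining a $U_k$-pseudoline to a $U_{k'}$-pseudoline with $k\neq k'$ must be a crossing in \emph{every} arrangement $\Lambda\in\arr(\cN)$: were it a contact in some $\Lambda$, it would place the two pseudolines in the same component of $\Lambda^{\contact}$, contradicting Lemma~\ref{lem:connectcomp}. Call these commutators \emph{external} and the remaining ones (joining same-component pseudolines) \emph{internal}. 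Since a swap at an internal commutator stays within a component and external commutators are forced crossings, the \emph{component profile} at each vertical strip~$c$ of~$\cN$ --- the subset $L_k(c)\subseteq[n]$ of levels occupied by $U_k$-pseudolines in strip~$c$ --- does not depend on~$\Lambda$. The internal commutators within~$U_k$ are exactly the commutators of the restriction $\cN_k\eqdef\cN(U_k)$, so restriction yields a bijection
\[
\arr(\cN)\;\longrightarrow\;\prod_{k=1}^{p}\arr(\cN_k),\qquad \Lambda\longmapsto(\Lambda_1,\dots,\Lambda_p).
\]

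For the simplicial complex, any face of $\Delta(\cN)$ is a set of contacts, hence consists of internal commutators and partitions uniquely as $\gamma=\gamma_1\sqcup\cdots\sqcup\gamma_p$ by component. The arrangement bijection above immediately gives $\gamma\in\Delta(\cN)$ iff $\gamma_k\in\Delta(\cN_k)$ for every~$k$, which is precisely the defining property of the simplicial join $\Delta(\cN_1)*\cdots*\Delta(\cN_p)$.

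For the brick polytope, the goal is to establish that for each $\ell\in U_k$ and every $\Lambda\in\arr(\cN)$,
\[
\omega(\Lambda)_\ell \;=\; \omega(\Lambda_k)_{\ell'}\,+\,\delta_\ell,
\]
where $\ell'$ is the rank of $\ell$ in~$U_k$ (used to identify $U_k$ with the levels of $\cN_k$) and $\delta_\ell$ is a constant depending only on~$\cN$. I plan to prove this via a column-by-column computation: denoting by $h_\ell(c)$ the level of $\lambda_\ell$ at strip~$c$ and by $f_c(h)$ the number of bricks of~$\cN$ in strip~$c$ lying strictly below height~$h$, one has $\omega(\Lambda)_\ell=\sum_{c}f_c(h_\ell(c))$. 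For $\ell\in U_k$, $h_\ell(c)$ is forced to be the $\ell'$-th smallest element of the fixed set $L_k(c)$, with $\ell'$ determined by $\Lambda_k$ alone and constant along every maximal run of $\cN$-strips that collapses to a single strip of~$\cN_k$. The analogous expansion of $\omega(\Lambda_k)_{\ell'}$ in~$\cN_k$ involves only the relative position of $U_k$-pseudolines, so subtracting the two expansions yields a residual that depends on~$\ell$ and on the non-$U_k$ part of~$\cN$ but not on~$\Lambda$; this residual is~$\delta_\ell$. Taking convex hulls then identifies $\Omega(\cN)$ with the translate $(\delta_1,\dots,\delta_n) + \prod_k\Omega(\cN_k)$ under the natural splitting $\R^n=\bigoplus_k\R^{U_k}$.

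The main technical obstacle is this last subtraction step. Because the stretching in the definition of $\cN(U_k)$ merges $\cN$-strips separated by external commutators into single $\cN_k$-strips, the bricks of $\cN$ are not in natural bijection with those of~$\cN_k$, so a direct brick-by-brick matching is unavailable. I intend to handle this by aggregating within each $\cN_k$-strip and leveraging the $\Lambda$-independence of the component profile $L_k(\cdot)$ to show that all contributions coming from bricks not internal to~$U_k$ are functions of the network alone, and therefore absorb into~$\delta_\ell$.
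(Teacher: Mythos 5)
Your first half is sound and follows the same route as the paper: identify the commutators joining pseudolines of distinct components, argue via Lemma~\ref{lem:connectcomp} that these are forced crossings in every arrangement, deduce the factorization $\arr(\cN)\cong\prod_k\arr(\cN_k)$ by restriction, and read off the join structure of $\Delta(\cN)$. The paper's proof stops here and asserts that "the result immediately follows"; you try to supply the missing details for the brick-polytope statement, which is where the trouble is.

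The gap you flag at the end is genuine, and the plan you sketch does not close it. The per-coordinate identity $\omega(\Lambda)_\ell=\omega(\Lambda_k)_{\ell'}+\delta_\ell$ that you aim for is not something one can get by "aggregating within each $\cN_k$-strip" and absorbing the rest into $\delta_\ell$, because the $\Lambda_k$-dependent contributions do \emph{not} cancel against the analogous contributions in $\omega(\Lambda_k)_{\ell'}$. The point is that the bricks of $\cN$ lying between two consecutive $U_k$-levels $L_k(c)[r]$ and $L_k(c)[r+1]$ are not in bijection with the single brick of $\cN_k$ between levels $r$ and $r+1$: pseudolines of other components may thread through that gap, creating additional bounded cells. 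Concretely, between a contact and the corresponding crossing of two $U_k$-pseudolines, those two pseudolines may drift apart by more than one $\cN$-level while a non-$U_k$ pseudoline passes between them; then the corresponding flip in $\Lambda_k$ moves $\omega(\Lambda_k)$ by exactly one brick of $\cN_k$, while $\omega(\Lambda)\vert_{U_k}$ moves by more than one brick of $\cN$. So the residual $\omega(\Lambda)_\ell-\omega(\Lambda_k)_{\ell'}$ changes along that flip and is not a constant $\delta_\ell$. To make the brick-polytope half of the proposition rigorous you need a different mechanism than a coordinate-wise translation from $\omega(\Lambda_k)$ to $\omega(\Lambda)\vert_{U_k}$ (for instance, comparing vertex cones via Theorem~\ref{theo:cones} applied to the map $\Lambda_k\mapsto\omega(\Lambda)\vert_{U_k}$, rather than comparing brick counts directly); the route as written cannot be completed.
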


\begin{proof}
The commutators of $\cN$ can be partitioned into~$p+1$ sets: one set corresponding to each irreducible component of~$\cN$, and the set~$X$ of commutators between two different connected components in the contact graphs. All pseudoline arrangements supported by~$\cN$ have crossings at the commutators of~$X$, and are obtained by choosing independently their subarrangements on the irreducible components~$\cN_1,\dots,\cN_p$. The result immediately follows.
\end{proof}

In particular, the dimension of the brick polytope of a sorting network with $n$ levels and $p$ irreducible components is at most $n-p$. We will see in Corollary~\ref{coro:dim} that this is the exact dimension. For example, the brick polytope of a reduced network (see Example~\ref{exm:reduced}) has dimension $0$: it supports a unique pseudoline arrangement whose contact graph has no edge, and its brick polytope is a single point.

Proposition~\ref{prop:disconnected} enables us to only focus on irreducible sorting networks throughout this article. Among them, the following networks have the fewest commutators:

\begin{definition}\label{def:minimal}
An irreducible sorting network~$\cN$ is \defn{minimal} if it satisfies the following equivalent conditions:
\begin{enumerate}[(i)]
\item $\cN$ has $n$ levels and $m={n \choose 2}+n-1$ commutators.
\item The contact graph of a pseudoline arrangement supported by~$\cN$ is a tree.
\item The contact graphs of all pseudoline arrangements supported by~$\cN$ are trees.
\end{enumerate}
\end{definition}

For example, the networks of \fref{fig:network} all have $5$ levels and $14$ commutators. The rightmost is reducible, but the other two are minimal. To be convinced, draw the greedy pseudoline arrangement on these networks, and check that its contact graph is connected.

We come back to minimal irreducible sorting networks at the end of Section~\ref{subsec:fd} since their brick polytopes are of particular interest.

\subsection{The incidence cone of a directed multigraph}

In this section, we briefly recall classical properties of the vector configuration formed by the columns of the incidence matrix of a directed multigraph. We fix a directed multigraph~$G$ on~$n$ vertices, whose underlying undirected graph is connected. 
%An oriented edge with origin~$i$ and endpoint~$j$ is denoted~$(i,j)$.
Let~$(e_1,\dots,e_n)$ be the canonical basis of~$\R^n$ and $\one\eqdef\sum e_i$.

\begin{definition}
The \defn{incidence configuration} of the directed multigraph~$G$ is the vector configuration~$I(G) \eqdef \set{e_j-e_i}{(i,j)\in G} \subset\R^n$. The \defn{incidence cone} of~$G$ is the cone~$C(G)\subset\R^n$ generated by~$I(G)$, {\it i.e.} its positive span.
\end{definition}

In other words, the incidence configuration of a directed multigraph consists of the column vectors of its incidence matrix. Observe that the incidence cone is contained in the linear subspace of equation $\dotprod{\one}{x} = 0$. We will use the following relations between the graph properties of~$G$ and the orientation properties of~$I(G)$, which can be summed up by saying that the (oriented and unoriented) matroid of $G$ coincides with that of its incidence configuration $I(G)$. See~\cite{bvswz} for an introduction and reference on oriented matroids. In particular, Section 1.1 of that book explores the incidence configuration of a directed graph.

\begin{remark}\label{rem:incidenceconfiguration}
Consider a subgraph~$H$ of~$G$. Then the vectors of~$I(H)$:
\begin{enumerate}
\item are independent if and only if $H$~has no (not necessarily oriented) cycle, that is, if $H$ is a forest;
\item span the hyperplane~$\dotprod{\one}{x}= 0$ if and only if $H$~is connected and spanning;
\item form a basis of the hyperplane~$\dotprod{\one}{x}= 0$ if and only if $H$~is a spanning~tree;
\item form a circuit if and only if $H$~is a (not necessarily oriented) cycle; the positive and negative parts of the circuit correspond to the subsets of edges oriented in one or the other direction along this cycle; in particular,~$I(H)$ is a positive circuit if and only if $H$~is an oriented cycle;
\item form a cocircuit if and only if $H$~is a minimal (not necessarily oriented) cut; the positive and negative parts of the cocircuit correspond to the edges in one or the other direction in this cut; in particular,~$I(H)$ is a positive cocircuit if and only if $H$~is an oriented cut.
\end{enumerate}
\end{remark}

This remark on the incidence configuration translates into the following remark on the incidence cone:

\begin{remark}\label{rem:incidencecone}
Consider a subgraph~$H$ of~$G$. The incidence configuration $I(H)$ is the set of vectors of $I(G)$ contained in a $k$-face of $C(G)$ if and only if $H$ has $n-k$ connected components and the quotient graph $G/H$ is acyclic. In particular:
\begin{enumerate}
\item The cone~$C(G)$ has dimension~$n-1$ (since we assumed that the undirected graph underlying~$G$ is connected).
\item The cone~$C(G)$ is pointed if and only if~$G$ is an acyclic directed graph.
\item If~$G$ is acyclic, it induces a partial order on its set of nodes. The rays of~$C(G)$ correspond to the edges of the Hasse diagram of~$G$. The cone is simple if and only if the Hasse diagram of~$G$ is a tree.
\item The facets of~$C(G)$ correspond to the complements of the minimal directed cuts in~$G$. Given a minimal directed cut in~$G$, the characteristic vector of its sink is a normal vector of the corresponding facet.
\end{enumerate}
\end{remark}

\begin{example}
If~$G$ is the complete directed graph on~$n$ vertices, with one arc from any node to any other (and thus, two arcs between any pair of nodes, one in each direction), then its incidence configuration~$I(G) = \set{e_i-e_j}{i,j \in [n]}$ is the root system of type~$A$. See~\cite{Humphreys}.

If~$G$ is an acyclic orientation on the complete graph, then its incidence configuration~$I(G)$ is a positive system of roots~\cite{Humphreys}. The Hasse diagram of the order induced by~$G$ is a path, thus the incidence cone~$C(G)$ is simple. Its rays are spanned by the simple roots of the positive system~$I(G)$.
\end{example}

\subsection{The vertices of the brick polytope}\label{subsec:vc}

Let~$\cN$ be an irreducible sorting network supporting a pseudoline arrangement~$\Lambda$. We use the contact graph~$\Lambda^\contact$ to describe the cone of the brick polytope~$\Omega(\cN)$ at the brick vector~$\omega(\Lambda)$:

\begin{theorem}\label{theo:cones}
The cone of the brick polytope $\Omega(\cN)$ at the brick vector $\omega(\Lambda)$ is precisely the incidence cone $C(\Lambda^\contact)$ of the contact graph~$\Lambda^\contact$ of $\Lambda$:
$$\cone \set{\omega(\Lambda')-\omega(\Lambda)}{\Lambda' \in \arr(\cN)} = \cone \set{e_j-e_i}{(i,j) \in \Lambda^\contact}.$$
\end{theorem}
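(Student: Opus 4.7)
The plan is to prove the equality of cones by establishing the two containments separately, both via careful analysis of how a single flip modifies the brick vector.

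For the containment $C(\Lambda^\contact) \subseteq \cone\{\omega(\Lambda') - \omega(\Lambda) : \Lambda' \in \arr(\cN)\}$, I would show that each generator $e_j - e_i$ of $C(\Lambda^\contact)$ arises from a single flip. Fix an arc $(i,j) \in \Lambda^\contact$, corresponding to a contact $v$ at which pseudoline $i$ passes above pseudoline $j$, and let $w$ denote their unique crossing in $\Lambda$. Flipping $v$ produces an arrangement $\Lambda_1 \in \arr(\cN)$ in which pseudolines $i$ and $j$ interchange roles strictly inside the region they enclose between $v$ and $w$; every other pseudoline and every other portion of the diagram is unaltered. Each brick inside the enclosed region therefore contributes $+1$ to $\omega(\Lambda)_i$ and to $\omega(\Lambda_1)_j$ (with no contribution to the opposite coordinates), giving $\omega(\Lambda_1) - \omega(\Lambda) = k(e_j - e_i)$, where $k$ is the number of enclosed bricks. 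A brief geometric argument shows $k \geq 1$: since $v \neq w$ and the region is bounded by two distinct paths through the network, it contains at least one brick. Hence $e_j - e_i$ lies in the target cone.

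For the reverse containment, the cleanest route I foresee is polar duality: it suffices to prove that every linear functional $f \in \R^n$ satisfying $f_i \geq f_j$ for all arcs $(i,j) \in \Lambda^\contact$ has $\omega(\Lambda)$ as a maximizer on $\Omega(\cN)$. The flip computation above already gives this at the level of single flips: for any flip $\Lambda \to \Lambda_1$, we have $f \cdot (\omega(\Lambda_1) - \omega(\Lambda)) = k(f_j - f_i) \leq 0$ since $k \geq 1$ and $f_i \geq f_j$. The remaining work is to promote this local inequality to a global maximum on the whole brick polytope.

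The main obstacle is precisely this local-to-global upgrade. A naive induction on the flip distance between $\Lambda$ and $\Lambda'$ does not close up cleanly, because after a flip the contact graph itself changes (the arc $(i,j)$ is replaced by its reverse $(j,i)$), so the inductive hypothesis places $\omega(\Lambda') - \omega(\Lambda_1)$ in $C(\Lambda_1^\contact)$ rather than $C(\Lambda^\contact)$. I would instead pursue a brick-by-brick reformulation: write $f\cdot\omega(\Lambda) = \sum_B \sum_{\ell \in A_B} f_\ell$, where $A_B \subseteq [n]$ denotes the set of pseudolines of $\Lambda$ passing above brick $B$. Under the hypothesis that $f$ is weakly decreasing along $\Lambda^\contact$, one should argue that each $A_B$ realized by $\Lambda$ already maximizes the brick-$B$ contribution among all configurations $A_B'$ coming from any $\Lambda' \in \arr(\cN)$. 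This fits naturally with the Minkowski decomposition of $\Omega(\cN)$ into matroid polytopes announced in the abstract, where the cone at $\omega(\Lambda)$ is the Minkowski sum of matroid-cones at the vertices $\one_{A_B}$, whose basis-exchange generators are expected to recover exactly $C(\Lambda^\contact)$.
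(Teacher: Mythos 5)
Your first containment is correct and coincides with the paper's: a single flip moving a contact from pseudoline $i$ (above) to pseudoline $j$ (below) changes the brick vector by $k(e_j-e_i)$ with $k\ge 1$, so every generator of $C(\Lambda^\contact)$ lies in the cone at $\omega(\Lambda)$. The polar-duality reformulation of the reverse containment is also the right framing: it reduces to showing that every $f\in\R^n$ with $f_i\ge f_j$ for all arcs $(i,j)\in\Lambda^\contact$ is maximized on $\Omega(\cN)$ at $\omega(\Lambda)$.

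The gap is exactly the local-to-global step that you yourself flag. The observation that every flip out of $\Lambda$ weakly decreases $f$ only shows that $\omega(\Lambda)$ is a local optimum in the flip graph $G(\cN)$, and since a flip changes the contact graph (reversing the flipped arc), one cannot iterate the inequality along a flip path to an arbitrary $\Lambda'$. Your proposed remedy via the brick-by-brick Minkowski decomposition is stated only as a plan: you write that ``one should argue'' that each $A_B$ maximizes the brick-$B$ contribution, and that the matroid cones are ``expected to recover'' $C(\Lambda^\contact)$. Neither assertion is proved, and neither is automatic. The tangent cone of the Minkowski sum at $\omega(\Lambda)=\sum_b\omega(\Lambda,b)$ is the Minkowski sum of the tangent cones of the summands at $\omega(\Lambda,b)$; to conclude you would still have to show each matroid tangent cone is contained in $C(\Lambda^\contact)$, i.e.\ that every basis exchange of the matroid of $\Omega(\cN,b)$ available at $A_b(\Lambda)$ is in the direction $e_j-e_i$ of some arc $(i,j)\in\Lambda^\contact$. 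It is not clear why an exchange $A_b(\Lambda)-i+j$ realized by some possibly flip-distant arrangement $\Lambda'$ should correspond to a contact of $\Lambda$, so this is a genuine missing argument, not a formality.

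The paper closes the gap differently and more economically. Rather than handling arbitrary $f$, it reduces to the facet normals $\one_V$ of $C(\Lambda^\contact)$, which by Remark~\ref{rem:incidencecone}(4) correspond to minimal directed cuts $(U,V)$ of $\Lambda^\contact$. Taking $\gamma$ to be the commutators of the cut arcs, it uses the connectivity of $\arr(\cN|\gamma)$ by flips (Theorem~\ref{theo:connect}) and the observation that any flip between two arrangements whose contacts contain $\gamma$ involves two pseudolines both in $U$ or both in $V$, hence is orthogonal to $\one_V$; then it shows that any $\one_V$-minimizer has no arc from $U$ to $V$ and can be joined to $\Lambda$ by flips of exactly this restricted type, via a sweeping/induction-on-levels argument. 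This restriction to facet normals of the incidence cone, together with flip connectivity inside the face $\arr(\cN|\gamma)$, is the idea your proposal is missing.
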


\begin{proof}
Assume that $\Lambda'$ is obtained from $\Lambda$ by flipping a contact from its $i$\textsuperscript{th} pseudoline to its $j$\textsuperscript{th} pseudoline. Then the difference $\omega(\Lambda')-\omega(\Lambda)$ is a positive multiple of $e_j-e_i$. This immediately implies that the incidence cone $C(\Lambda^\contact)$ is included in the cone of $\Omega(\cN)$ at $\omega(\Lambda)$.

Reciprocally, we have to prove that any facet $F$ of the cone $C(\Lambda^\contact)$ is also a facet of the brick polytope $\Omega(\cN)$. 
According to Remark \ref{rem:incidencecone}(4), there exists a minimal directed cut from a source set $U$ to a sink set $V$ (which partition the vertices of $\Lambda^\contact$) such that $\one_V \eqdef \sum_{v \in V} e_v$ is a normal vector of $F$. We denote by $\gamma$ the commutators of $\cN$ which correspond to the arcs of $\Lambda^\contact$ between~$U$ and~$V$. We claim that for any pseudoline arrangement $\Lambda'$ supported by $\cN$, the scalar product $\dotprod{\one_V}{\omega(\Lambda')}$ equals $\dotprod{\one_V}{\omega(\Lambda)}$ when $\gamma$ is a subset of the contacts of $\Lambda'$, and is strictly bigger than $\dotprod{\one_V}{\omega(\Lambda)}$ otherwise.

Remember first that the set of all pseudoline arrangements supported by $\cN$ and whose set of contacts contains $\gamma$ is connected by flips. Since a flip between two such pseudoline arrangements necessarily involves either two pseudolines of $U$ or two pseudolines of $V$, the corresponding incidence vector is orthogonal to $\one_V$. Thus, the scalar product $\dotprod{\one_V}{\omega(\Lambda')}$ is constant on all pseudoline arrangements whose set of contacts contains $\gamma$.

Reciprocally, we consider a pseudoline arrangement $\Lambda'$ supported by $\cN$ which minimizes the scalar product $\dotprod{\one_V}{\omega(\Lambda')}$. There is clearly no arc from $U$ to $V$ in~$\Lambda'^\contact$, otherwise flipping the corresponding contact in $\Lambda'$ would decrease the value of $\dotprod{\one_V}{\omega(\Lambda')}$. We next prove that we can join $\Lambda$ to $\Lambda'$ by flips involving two pseudolines of $U$ or two pseudolines of $V$. As a first step, we show that we can transform $\Lambda$ and $\Lambda'$ into pseudoline arrangements $\hat\Lambda$ and $\hat \Lambda'$ in which the first pseudoline coincide, using only flips involving two pseudolines of $U$ or two pseudolines of $V$. We can then conclude by induction on the number of levels of~$\cN$.

Assume first that the first pseudoline (the one which starts at level $1$ and ends at level $n$) of $\Lambda$ and $\Lambda'$ is in $U$. We sweep this pseudoline from left to right in $\Lambda$. If there is a contact above and incident to it, the above pseudoline must be in $U$. Otherwise we would have an arc between $V$ and $U$ in $\Lambda^\contact$. Consequently, we are allowed to flip this contact. By doing this again and again we obtain a pseudoline arrangement $\hat\Lambda$ whose first pseudoline starts at the bottom leftmost point and goes up whenever possible until getting to the topmost level. Since this procedures only relies on the absence of arc from $V$ to $U$ in $\Lambda^\contact$, we can proceed identically on $\Lambda'$ to get a pseudoline arrangement $\hat\Lambda'$ with the same first pseudoline. Finally, if the first pseudoline of $\Lambda$ and $\Lambda'$ is in~$V$, then we can argue similarly but sweeping the pseudoline from right to left.
\end{proof}

By Remark~\ref{rem:incidencecone}(1) and Proposition~\ref{prop:disconnected}, we obtain the dimension of~$\Omega(\cN)$:

\begin{corollary}\label{coro:dim}
The brick polytope of an irreducible sorting network with $n$ levels has dimension $n-1$. In general, the brick polytope of a sorting network with $n$ levels and $p$ irreducible components has dimension $n-p$.
\end{corollary}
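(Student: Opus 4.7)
The proof should be essentially a bookkeeping exercise built on the results already in place: Theorem~\ref{theo:cones}, Remark~\ref{rem:incidencecone}, Lemma~\ref{lem:depth}, and Proposition~\ref{prop:disconnected} do all the real work.

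First I would handle the irreducible case. Fix any pseudoline arrangement $\Lambda\in\arr(\cN)$. By irreducibility and Lemma~\ref{lem:connectcomp}, the undirected graph underlying the contact graph $\Lambda^\contact$ is connected on its $n$ vertices. Hence Remark~\ref{rem:incidencecone}(1) gives $\dim C(\Lambda^\contact) = n-1$. Theorem~\ref{theo:cones} identifies this incidence cone with the cone of $\Omega(\cN)$ at the vertex $\omega(\Lambda)$; since the cone of a polytope at a vertex has the same dimension as the polytope itself, we obtain $\dim \Omega(\cN) \ge n-1$. The matching upper bound $\dim \Omega(\cN) \le n-1$ is precisely Lemma~\ref{lem:depth}. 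This yields $\dim\Omega(\cN)=n-1$.

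For the general case, let $\cN_1,\dots,\cN_p$ be the irreducible components of $\cN$, with $n_i$ levels respectively. Since the levels of $\cN$ correspond bijectively to the vertices of the (common) contact graphs of pseudoline arrangements supported by $\cN$, and each such vertex belongs to exactly one connected component, we have $n_1+\cdots+n_p=n$. Proposition~\ref{prop:disconnected} asserts that $\Omega(\cN)$ is a translate of the Cartesian product $\Omega(\cN_1)\times\cdots\times\Omega(\cN_p)$. Applying the irreducible case to each factor gives $\dim\Omega(\cN_i)=n_i-1$, and therefore
\[
\dim\Omega(\cN) \;=\; \sum_{i=1}^p (n_i-1) \;=\; n-p.
\]

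There is no real obstacle here: the ``hard'' ingredient is already Theorem~\ref{theo:cones} together with its consequence on cone dimensions, and the reduction from the general case to the irreducible one is entirely handled by Proposition~\ref{prop:disconnected}. The only small point to be careful about is verifying that the number of levels adds up correctly across the irreducible components, which is immediate from the fact that the vertex set $[n]$ of the contact graph partitions into the components.
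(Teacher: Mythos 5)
Your proof is correct and follows essentially the same route as the paper, which simply cites Remark~\ref{rem:incidencecone}(1) and Proposition~\ref{prop:disconnected}. One small slip: $\omega(\Lambda)$ need not be a vertex of $\Omega(\cN)$ for arbitrary $\Lambda$ (you could take $\Lambda=\Gamma(\cN)$ to guarantee a vertex), but this is harmless since the tangent cone of a polytope at any of its points has the same dimension as the polytope, so the invocation of Lemma~\ref{lem:depth} for the upper bound is in fact redundant.
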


According to Remark \ref{rem:incidencecone}(2), Theorem~\ref{theo:cones} also characterizes the pseudoline arrangements whose brick vector is a vertex of $\Omega(\cN)$:

\begin{corollary}\label{coro:vc}
The brick vector $\omega(\Lambda)$ is a vertex of the brick polytope $\Omega(\cN)$ if and only if the contact graph $\Lambda^\contact$ of $\Lambda$ is acyclic.
\end{corollary}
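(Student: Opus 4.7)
The plan is to derive this corollary as an immediate consequence of Theorem~\ref{theo:cones} together with Remark~\ref{rem:incidencecone}(2). First I would invoke the standard polytopal criterion: a point $p$ of a convex polytope $P$ is a vertex of $P$ if and only if the cone $\cone\set{q-p}{q \in P}$ is pointed, that is, contains no line. Applied to our setting with $P = \Omega(\cN)$ and $p = \omega(\Lambda)$, this tells us that $\omega(\Lambda)$ is a vertex of the brick polytope precisely when its tangent cone is pointed.

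Next I would apply Theorem~\ref{theo:cones}, which identifies this tangent cone with the incidence cone $C(\Lambda^\contact)$ of the contact graph $\Lambda^\contact$. Strictly speaking Theorem~\ref{theo:cones} is stated for irreducible networks, but the reducible case follows from Proposition~\ref{prop:disconnected}: the brick polytope decomposes as a product, and both vertex-ness of $\omega(\Lambda)$ and acyclicity of $\Lambda^\contact$ are preserved under taking products and direct sums of irreducible components respectively. Alternatively, Theorem~\ref{theo:cones} extends verbatim to the reducible case since the proof never uses irreducibility in an essential way.

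Finally I would invoke Remark~\ref{rem:incidencecone}(2), which states that the incidence cone $C(G)$ of a directed (multi)graph $G$ is pointed if and only if $G$ is acyclic. Chaining the three equivalences yields the corollary: $\omega(\Lambda)$ is a vertex of $\Omega(\cN)$ $\iff$ the tangent cone at $\omega(\Lambda)$ is pointed $\iff$ $C(\Lambda^\contact)$ is pointed $\iff$ $\Lambda^\contact$ is acyclic.

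There is no real obstacle here; the hard work has already been done in Theorem~\ref{theo:cones}. The only minor care needed is to remark that $\Lambda^\contact$ may be a multigraph, but the pointedness criterion in Remark~\ref{rem:incidencecone}(2) is insensitive to multiplicities of arcs: a multi-arc contributes multiple copies of the same vector $e_j - e_i$ to $I(\Lambda^\contact)$ but does not alter the cone it generates, while a pair of antiparallel arcs forming a $2$-cycle already produces a line in $C(\Lambda^\contact)$, matching the acyclicity condition.
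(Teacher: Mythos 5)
Your proposal is correct and follows exactly the same route as the paper, which derives the corollary in one line by combining Theorem~\ref{theo:cones} (identifying the tangent cone at $\omega(\Lambda)$ with the incidence cone $C(\Lambda^\contact)$) with Remark~\ref{rem:incidencecone}(2) (pointedness iff acyclicity). Note that the corollary lives inside Section~\ref{subsec:vc}, whose standing hypothesis is that $\cN$ is irreducible, so your digression on the reducible case, while harmless, is not required.
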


For example, the brick vector of the greedy pseudoline arrangement~$\Gamma(\cN)$ is always a vertex of~$\Omega(\cN)$ since its contact graph is sorted (see Example~\ref{exm:contactgreedy}). Similarly, the brick vector of the sink of the oriented graph of decreasing flips is always a vertex of~$\Omega(\cN)$. These two greedy pseudoline arrangements can be the only vertices of the brick polytope, as happens for $2$-level networks:

\begin{example}[$2$-level networks, continued]
Let $\cX_m$ be the sorting network formed by two levels related by $m$ commutators. The contact graph of the pseudoline arrangement whose unique crossing is the $i$\textsuperscript{th} commutator of~$\cX_m$ is a multigraph with two vertices and $m-1$ edges, $m-i$ of them in one direction and $i-1$ in the other. Thus, only the first and last commutators give pseudoline arrangements with acyclic contact graphs.
\end{example}

In general, the map~$\omega:\arr(\cN)\to\R^n$ (which associates to a pseudoline arrangement its brick vector) is not injective on~$\arr(\cN)$. For example, many interior points appear several times in Examples~\ref{exm:3levels} and~\ref{exm:duplicated1}.
However, the vertices of the brick polytope have precisely one preimage by~$\omega$:

\begin{proposition}\label{prop:vertices}
The map~$\omega:\arr(\cN)\to\R^n$ restricts to a bijection between the pseudoline arrangements supported by~$\cN$ whose contact graphs are acyclic and the vertices of the brick polytope~$\Omega(\cN)$.
\end{proposition}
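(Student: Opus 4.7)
The statement makes two assertions: that $\omega$ sends each arrangement with an acyclic contact graph to a vertex of $\Omega(\cN)$, and that the resulting map is injective. The first is immediate: by Corollary~\ref{coro:vc}, acyclicity of $\Lambda^\contact$ implies that $\omega(\Lambda)$ is a vertex; and conversely any vertex of $\Omega(\cN)$ is, by definition of a convex hull, of the form $\omega(\Lambda)$ for some $\Lambda \in \arr(\cN)$, whose contact graph must again be acyclic by Corollary~\ref{coro:vc}. All the content is therefore in injectivity. The plan is to assume $\omega(\Lambda) = \omega(\Lambda')$ with $\Lambda^\contact$ acyclic, show that every commutator which is a contact of $\Lambda$ is also a contact of $\Lambda'$, and conclude $\Lambda = \Lambda'$, since both arrangements have exactly $m - {n \choose 2}$ contacts and an arrangement is determined by its contact set.

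To carry out this inclusion, I would pick an arbitrary contact $c$ of $\Lambda$, corresponding to an arc $(i,j) \in \Lambda^\contact$, and isolate $c$ using an appropriate directed cut of $\Lambda^\contact$. The natural choice is $V \eqdef \set{k \in [n]}{k \succeq j \text{ in the partial order induced by } \Lambda^\contact}$ and $U \eqdef [n] \ssm V$. Since $V$ is upward closed, $(U, V)$ is a directed cut of $\Lambda^\contact$: all arcs of $\Lambda^\contact$ between the two parts go from $U$ to $V$. Moreover $j \in V$ trivially, and $i \in U$ since $i \in V$ would mean $i \succeq j$, which combined with the arc $(i, j)$ would produce a directed cycle $j \preceq i \prec j$, contradicting acyclicity. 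Consequently the arc $(i,j)$ crosses this cut, and $c$ lies in the set $\gamma$ of commutators at which $\Lambda$ has a contact with a $U$-pseudoline above a $V$-pseudoline.

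The key step is then to invoke the argument from the proof of Theorem~\ref{theo:cones}, which shows that $\dotprod{\one_V}{\omega(\cdot)}$ is minimized on $\arr(\cN)$ precisely on the arrangements of $\arr(\cN | \gamma)$. Since $\omega(\Lambda) = \omega(\Lambda')$, the arrangement $\Lambda'$ is then also a minimizer, hence $\gamma \subseteq \text{contacts}(\Lambda')$, and in particular $c \in \text{contacts}(\Lambda')$. Iterating over all contacts of $\Lambda$ gives the desired inclusion. The main point to verify carefully, and the only real obstacle I foresee, is that the argument in the proof of Theorem~\ref{theo:cones} continues to apply to the cut $(U, V)$ even though it need not be a \emph{minimal} directed cut: inspection confirms that its connectivity half uses only flip-connectivity of $\arr(\cN | \gamma)$ (Theorem~\ref{theo:connect}), and its sweeping half uses only the absence of arcs from $V$ to $U$ in $\Lambda^\contact$ and in $\Lambda'^\contact$, both of which are built into the definition of a directed cut.
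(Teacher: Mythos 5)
Your proof is correct, and it is a genuinely different argument from the one in the paper. The paper proves injectivity by induction on the graph of $\Omega(\cN)$: it shows (i) the greedy arrangement $\Gamma(\cN)$ is the unique preimage of its brick vector (because equality of incidence cones forces all arcs of $\Lambda^\contact$ to be sorted, which characterizes $\Gamma(\cN)$), and (ii) uniqueness of preimages propagates along edges of $\Omega(\cN)$ (by explicitly naming the flip that realizes a given edge direction). You instead argue directly and ``locally'' at an arbitrary vertex: for each contact of $\Lambda$ you build the up-set cut of the acyclic contact graph, put that contact into the corresponding set $\gamma$ of $UV$-contacts, and re-run the minimization argument of Theorem~\ref{theo:cones} to force $\gamma\subseteq\text{contacts}(\Lambda')$; equality of contact sets then follows by counting. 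This avoids any reliance on the special role of $\Gamma(\cN)$ and on connectivity of the polytope graph, at the cost of having to re-examine the proof of Theorem~\ref{theo:cones} for non-minimal cuts. Your check of that point is essentially right: the sweeping (``peel off the first pseudoline'') part of that proof indeed uses only the absence of $V\!\to\!U$ arcs in both contact graphs and goes through verbatim for any directed cut, and that part alone already gives everything you need (that the minimum is attained at $\omega(\Lambda)$, and that every minimizer has $\gamma$ among its contacts). One small imprecision: you say that absence of $V\!\to\!U$ arcs in $\Lambda'^\contact$ is ``built into the definition of a directed cut'', but the cut is a cut of $\Lambda^\contact$, not of $\Lambda'^\contact$; for $\Lambda'$ this absence is instead derived inside the proof of Theorem~\ref{theo:cones} from the hypothesis that $\Lambda'$ minimizes $\dotprod{\one_V}{\omega(\cdot)}$ — which does hold in your application because $\omega(\Lambda')=\omega(\Lambda)$ — so the conclusion stands, but the justification should be phrased that way.
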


\begin{proof}
According to Corollary~\ref{coro:vc}, the map~$\omega$ defines a surjection from the pseudoline arrangements supported by~$\cN$ whose contact graphs are acyclic to the vertices of the brick polytope~$\Omega(\cN)$. To prove injectivity, we use an inductive argument based on the following claims:
\begin{enumerate}[(i)]
\item the greedy pseudoline arrangement~$\Gamma(\cN)$ is the unique preimage of~$\omega(\Gamma(\cN))$;
\item if a vertex of $\Omega(\cN)$ has a unique preimage by~$\omega$, then so do its neighbors in the graph of~$\Omega(\cN)$.
\end{enumerate}

To prove~(i), consider a pseudoline arrangement~$\Lambda$ supported by~$\cN$ such that $\omega(\Lambda)=\omega(\Gamma(\cN))$. According to Theorem~\ref{theo:cones}, the contact graphs~$\Lambda^\contact$ and~$\Gamma(\cN)^\contact$ have the same incidence cone, which ensures that all arcs of~$\Lambda^\contact$ are sorted. In other words, all flips in~$\Lambda$ are decreasing. Since this property characterizes the greedy pseudoline arrangement, we obtain that~$\Lambda=\Gamma(\cN)$.

To prove~(ii), consider two neighbors $v,v'$ in the graph of~$\Omega(\cN)$. Let ${i,j\in[n]}$ be such that $v'-v=\alpha(e_j-e_i)$ for some $\alpha>0$. Let $\Lambda$ be a pseudoline arrangement supported by~$\cN$ such that $v=\omega(\Lambda)$. Let $\Lambda'$ denote the pseudoline arrangement obtained from $\Lambda$ by flipping the rightmost contact between its $i$\textsuperscript{th} and $j$\textsuperscript{th} pseudolines if $i<j$ and the leftmost one if $i>j$. Then $v'=\omega(\Lambda')$. In particular, if $v$ has two distinct preimages by $\omega$, then so does $v'$. This proves~(ii).
\end{proof}

\begin{corollary}\label{coro:graph}
The graph of the brick polytope is a subgraph of~$G(\cN)$ whose vertices are the pseudoline arrangements with acyclic contact graphs.
\end{corollary}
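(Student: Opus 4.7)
The plan is to combine Proposition~\ref{prop:vertices} with Theorem~\ref{theo:cones} in order to identify both the vertices and the edges of $\Omega(\cN)$. Proposition~\ref{prop:vertices} already asserts that $\omega$ restricts to a bijection between the pseudoline arrangements supported by~$\cN$ whose contact graphs are acyclic and the vertex set of~$\Omega(\cN)$, so the statement on vertices is immediate. What remains is to verify that every edge of the polytope~$\Omega(\cN)$ corresponds to a flip in the sorting network~$\cN$.

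First I would fix an edge $e$ of $\Omega(\cN)$ incident to a vertex $\omega(\Lambda)$, where $\Lambda^\contact$ is acyclic by Corollary~\ref{coro:vc}. Since $e$ is a $1$-face of $\Omega(\cN)$, it lies along an extreme ray of the tangent cone of $\Omega(\cN)$ at $\omega(\Lambda)$. By Theorem~\ref{theo:cones}, this tangent cone equals the incidence cone~$C(\Lambda^\contact)$, and by Remark~\ref{rem:incidencecone}(3) its extreme rays are spanned precisely by the vectors $e_j-e_i$ corresponding to the edges $(i,j)$ of the Hasse diagram of the partial order induced by~$\Lambda^\contact$. Thus the other endpoint $v'$ of the edge~$e$ must satisfy $v'-\omega(\Lambda)=\alpha(e_j-e_i)$ for some $\alpha>0$ and some such pair~$(i,j)$.

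Next I would invoke the construction already used in the proof of Proposition~\ref{prop:vertices}(ii): from $\Lambda$ we obtain a pseudoline arrangement~$\Lambda'$ by flipping the rightmost contact between its $i$\textsuperscript{th} and $j$\textsuperscript{th} pseudolines (if $i<j$) or the leftmost such contact (if $i>j$), and this $\Lambda'$ satisfies $\omega(\Lambda')=v'$. In particular $\Lambda$ and $\Lambda'$ are adjacent in~$G(\cN)$, so the edge $e$ of $\Omega(\cN)$ is realized by a flip in~$\cN$. Since this reasoning applies to every edge of $\Omega(\cN)$, the graph of $\Omega(\cN)$ embeds in $G(\cN)$ as a subgraph whose vertex set consists of the pseudoline arrangements with acyclic contact graphs.

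The only mild subtlety is the first step, namely confirming that an edge of $\Omega(\cN)$ incident to $\omega(\Lambda)$ really lies along an extreme ray of the tangent cone at $\omega(\Lambda)$ (rather than being a chord of a higher-dimensional face). This is a standard polytopal fact, but it is the one step that one must be careful to state explicitly; once it is in place, everything reduces to Theorem~\ref{theo:cones}, Remark~\ref{rem:incidencecone}(3), and the flip construction already isolated in the proof of Proposition~\ref{prop:vertices}.
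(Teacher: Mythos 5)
Your proof is correct and follows the route the paper intends: the corollary is stated without proof because it is meant to follow directly from Corollary~\ref{coro:vc}, Proposition~\ref{prop:vertices}, and Theorem~\ref{theo:cones}, and you trace through precisely that chain. The one step you flag as a subtlety --- that an edge of $\Omega(\cN)$ incident to $\omega(\Lambda)$ lies along an extreme ray of the tangent cone --- is indeed the standard polytopal fact that the local cone at a vertex records the edges of the polytope incident to it, and invoking Theorem~\ref{theo:cones} together with the flip construction already isolated in the proof of Proposition~\ref{prop:vertices}(ii) is exactly what is needed to conclude.
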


Example~\ref{exm:counterExample} shows that the graph of the brick polytope is not always the subgraph of~$G(\cN)$ induced by the pseudoline arrangements with acyclic~contact~graphs.

\begin{example}[Duplicated networks, continued]\label{exm:duplicated3}
For a connected graph~$\Gamma$, consider the duplicated network~$\cZ(\Gamma)$ defined in Example~\ref{exm:duplicated1}. The contact graphs of the~$2^{|\Gamma|}$ pseudoline arrangements supported by~$\cZ(\Gamma)$ are the $2^{|\Gamma|}$ orientations on~$\Gamma$, and two pseudoline arrangements are related by a flip if their contact graphs differ in the orientation of a single edge of~$\Gamma$. According to Proposition~\ref{prop:vertices}, the vertices of the brick polytope~$\Omega(\cZ(\Gamma))$ correspond to the acyclic orientations on~$\Gamma$.

\begin{figure}[b]
  \capstart
  \centerline{\includegraphics[scale=.6]{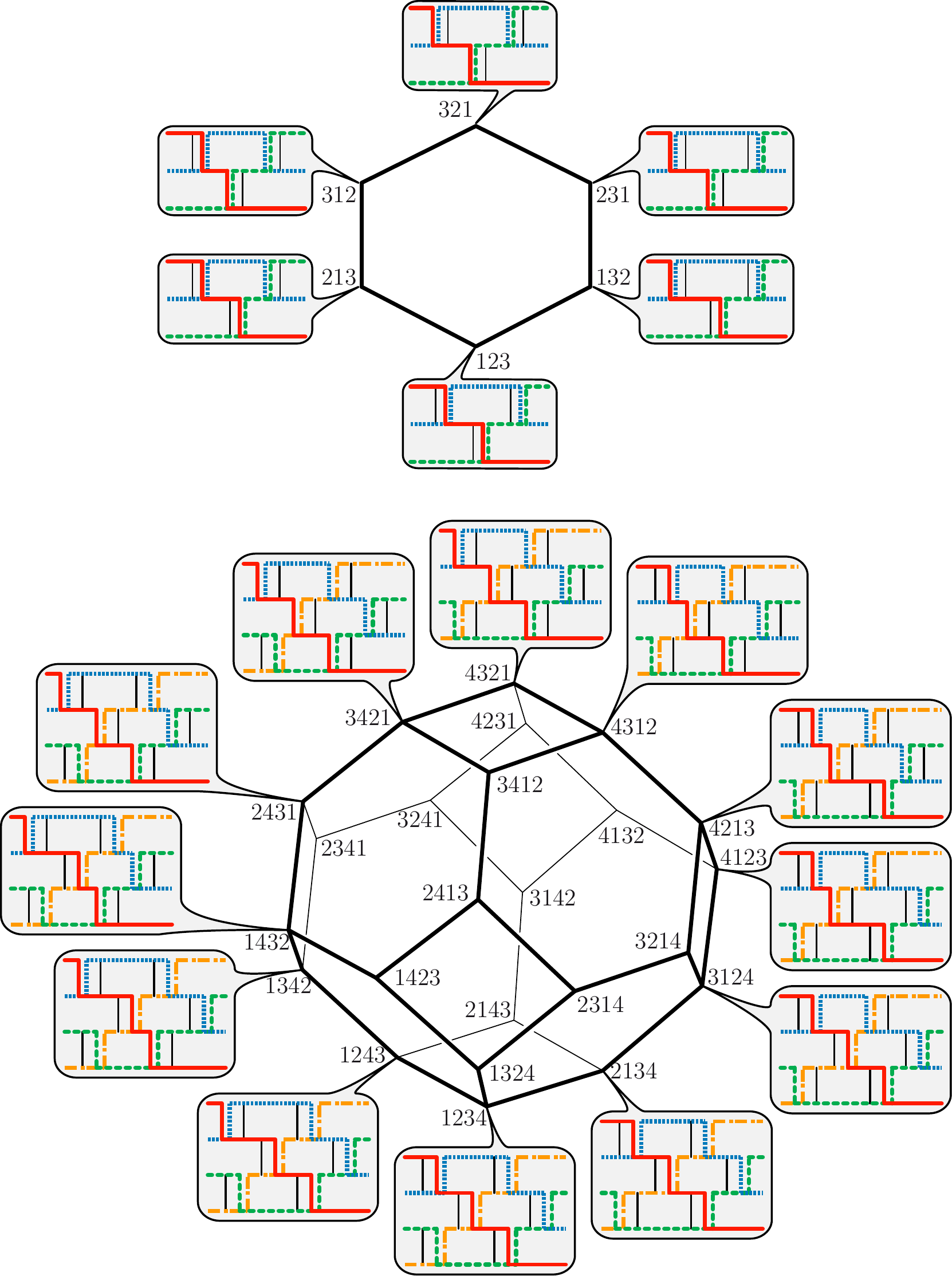}}
  \caption{The brick polytope~$\Omega(\cZ(K_n))$ is (a translate of) the permutahedron~$\Pi_n$. For space reason, we have represented only certain  arrangements. The rest of the drawing is left to the reader.}
  \label{fig:permutahedron}
\end{figure}

When~$\Gamma=K_n$ is the complete graph on $n$ vertices, the contact graphs of the pseudoline arrangements supported by $\cZ(K_n)$ are the tournaments on $[n]$, the vertices of $\Omega(\cZ(K_n))$ correspond to the permutations of~$[n]$, and the graph of $\Omega(\cZ(K_n))$ is a subgraph of that of the permutahedron~$\Pi_n=\conv\set{(\sigma(1),\dots,\sigma(n))^T}{\sigma\in\mathfrak{S}_n}$. Since $\Pi_n$ is simple and both $\Omega(\cZ(K_n))$ and $\Pi_n$ have dimension~$n-1$, they must have in fact the same graph, and consequently the same combinatorial structure (by simplicity~\cite{BlindMani, Kalai}). In fact, $\Omega(\cZ(K_n))$ is a translate of~$\Pi_n$. See \fref{fig:permutahedron}.

When $\Gamma$ is a tree, all possible orientations on $\Gamma$ are acyclic. The brick polytope $\Omega(\cZ(\Gamma))$ is thus a cube.
\end{example}

\begin{remark}[Simple and non-simple vertices]
According to Remark~\ref{rem:incidencecone}(3), the brick vector $\omega(\Lambda)$ of a pseudoline arrangement~$\Lambda$ supported by~$\cN$ is a simple vertex of~$\Omega(\cN)$ if and only if the contact graph~$\Lambda^\contact$ is acyclic and its Hasse diagram is a tree. See \fref{fig:counterExample} for a brick polytope with non-simple vertices.
\end{remark}

\subsection{The faces of the brick polytope}\label{subsec:fd}

Let $\cN$ be an irreducible sorting network. Theorem~\ref{theo:cones} and Remark~\ref{rem:incidencecone}(4) provide the facet description of the brick polytope~$\Omega(\cN)$:

\begin{corollary}\label{coro:fd}
The facet normal vectors of the brick polytope $\Omega(\cN)$ are precisely all facet normal vectors of the incidence cones of the contact graphs of the pseudoline arrangements supported by $\cN$. Representatives for them are given by the characteristic vectors of the sinks of the minimal directed cuts of these~contact~graphs.
\end{corollary}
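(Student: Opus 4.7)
The plan is to combine the preceding Theorem \ref{theo:cones}, reading it as a description of the tangent cone of $\Omega(\cN)$ at each point $\omega(\Lambda)$, with Remark \ref{rem:incidencecone}(4), which identifies the facet normals of an incidence cone. The only thing that really needs to be argued is that the facet normals of $\Omega(\cN)$ agree, globally, with the union of the facet normals of these local cones.

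First I would fix a pseudoline arrangement $\Lambda$ supported by $\cN$ and observe that, because $\Omega(\cN) = \conv\set{\omega(\Lambda')}{\Lambda' \in \arr(\cN)}$ contains $\omega(\Lambda)$, the tangent cone of $\Omega(\cN)$ at $\omega(\Lambda)$ coincides with $\cone\set{\omega(\Lambda') - \omega(\Lambda)}{\Lambda' \in \arr(\cN)}$, and Theorem \ref{theo:cones} identifies this with $C(\Lambda^\contact)$. I would then invoke the standard fact that the facet normals of the tangent cone of a polytope $P$ at a point $p$ are precisely the facet normals of those facets of $P$ containing $p$. Applied to $p = \omega(\Lambda)$, this shows that every facet normal of $C(\Lambda^\contact)$ is a facet normal of $\Omega(\cN)$, regardless of whether $\omega(\Lambda)$ is a vertex (in which case $C(\Lambda^\contact)$ is pointed) or lies in the interior of a higher-dimensional face (in which case $C(\Lambda^\contact)$ is not pointed, but its facet-normal description is still valid).

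For the reverse containment, I would pick any facet $F$ of $\Omega(\cN)$ and choose a vertex $\omega(\Lambda) \in F$, which exists and has $\Lambda^\contact$ acyclic by Corollary \ref{coro:vc}; then the outer normal of $F$ appears as a facet normal of the tangent cone $C(\Lambda^\contact)$ at that vertex. Combining both inclusions shows that the facet normals of $\Omega(\cN)$ are exactly those of the cones $C(\Lambda^\contact)$ as $\Lambda$ ranges over $\arr(\cN)$. The explicit description in the second sentence of the corollary is then immediate from Remark \ref{rem:incidencecone}(4), which states that the facets of $C(\Lambda^\contact)$ are indexed by minimal directed cuts of $\Lambda^\contact$ with outer normals given by the characteristic vectors of their sink sides. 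No real obstacle arises; the only mild care needed is to allow all arrangements in the union, not just those whose brick vector is a vertex, so that every facet of $\Omega(\cN)$ is seen from at least one tangent cone.
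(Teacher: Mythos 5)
Your proof is correct and follows the same route the paper intends but leaves implicit: Theorem~\ref{theo:cones} identifies the tangent cone at each brick vector with $C(\Lambda^\contact)$, Remark~\ref{rem:incidencecone}(4) describes its facets, and the standard correspondence between facets of a polytope and facets of its tangent cones gives both inclusions. Your explicit two-sided argument (reading every $C(\Lambda^\contact)$-facet as a facet of $\Omega(\cN)$, and anchoring every facet of $\Omega(\cN)$ at a vertex with acyclic contact graph) is precisely the elaboration the paper's one-line justification presupposes.
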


\begin{remark}
Since we know its vertices and its facet normal vectors, we obtain immediately the complete inequality description of the brick polytope. More precisely, for each given pseudoline arrangement~$\Lambda$ supported by~$\cN$ with an acyclic contact graph~$\Lambda^\contact$, and for each minimal directed cut in~$\Lambda^\contact$ with source~$U$ and sink~$V$, the right-hand-side of the inequality of the facet with normal vector~$\one_V \eqdef \sum_{v \in V} e_v$ is given by the sum, over all pseudolines~$\ell$ of~$\Lambda$ in~$V$, of the number of bricks below~$\ell$.
\end{remark}

More generally, Theorem~\ref{theo:cones} implies a combinatorial description of the faces of the brick polytope. We need the following definition:

\begin{definition}
A set $\gamma$ of commutators of~$\cN$ is \defn{$k$-admissible} if there exists a pseudoline arrangement $\Lambda\in\arr(\cN|\gamma)$ such that $\Lambda^\contact\ssm\gamma^\contact$ has $n-k$ connected components and $\Lambda^\contact/(\Lambda^\contact\ssm\gamma^\contact)$ is acyclic (where $\gamma^\contact$ denotes the subgraph of~$\Lambda^\contact$ corresponding to the commutators of~$\gamma$).
\end{definition}

Theorem~\ref{theo:cones}, Remark~\ref{rem:incidencecone}, and Proposition~\ref{prop:vertices} lead to our face description:

\begin{corollary}\label{coro:faces}
Let $\Phi$ be the map which associates to a subset $X$ of $\R^n$ the set of commutators of $\cN$ which are contacts in all the pseudoline arrangements supported by~$\cN$ whose brick vectors lie in~$X$. Let $\Psi$ be the map which associates to a set~$\gamma$ of commutators of~$\cN$ the convex hull of $\set{\omega(\Lambda)}{\Lambda\in\arr(\cN|\gamma)}$. Then the maps $\Phi$ and $\Psi$ define inverse bijections between the $k$-faces of~$\Omega(\cN)$ and the $k$-admissible sets of commutators of~$\cN$.
\end{corollary}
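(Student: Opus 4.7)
The plan is to reduce the bijection to the local cone structure at the vertices of $\Omega(\cN)$, combining Theorem~\ref{theo:cones} with Remark~\ref{rem:incidencecone}, and then to extend the facet-level argument from the proof of Theorem~\ref{theo:cones} to arbitrary faces.

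First I would fix a $k$-face $F$ of $\Omega(\cN)$ and pick any vertex $v$ of $F$. By Proposition~\ref{prop:vertices}, $v=\omega(\Lambda)$ for a unique pseudoline arrangement $\Lambda$ with acyclic contact graph $\Lambda^\contact$. Theorem~\ref{theo:cones} identifies the cone of $\Omega(\cN)$ at $v$ with $C(\Lambda^\contact)$, so the cone of $F$ at $v$ is a $k$-face of $C(\Lambda^\contact)$. Remark~\ref{rem:incidencecone} then provides a unique subgraph $H \subseteq \Lambda^\contact$ with $n-k$ connected components, with $\Lambda^\contact/H$ acyclic, and such that $I(H)$ is exactly the set of vectors of $I(\Lambda^\contact)$ lying in that face. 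Letting $\gamma$ be the set of commutators of $\cN$ corresponding to the arcs of $\Lambda^\contact \ssm H$, so that $\Lambda^\contact \ssm \gamma^\contact = H$, we see that $\gamma$ is $k$-admissible with witness~$\Lambda$.

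Next I would prove $F = \Psi(\gamma)$ and $\Phi(F) = \gamma$. For $\Psi(\gamma) \subseteq F$, the subgraph of $G(\cN)$ induced by $\arr(\cN|\gamma)$ is isomorphic to the flip graph of $\cN \ssm \gamma$, hence connected by Theorem~\ref{theo:connect}; each of its edges corresponds to a flip along an arc $(i,j) \in H$ and thus to a direction $e_j - e_i$ lying in the affine hull of $F$, so starting from $\Lambda$ all brick vectors in $\arr(\cN|\gamma)$ remain inside $F$. For the reverse inclusion, I would write $F$ as the intersection of the facets of $\Omega(\cN)$ containing it; by Corollary~\ref{coro:fd} and the proof of Theorem~\ref{theo:cones}, each such facet equals $\Psi(\gamma_i)$ for the set $\gamma_i$ of commutators separating the source and the sink of a minimal directed cut of $\Lambda^\contact$, and the facets of $C(\Lambda^\contact)$ containing the cone of $F$ at $v$ are precisely those for which $\gamma_i \subseteq \gamma$; taking unions yields $\bigcup_i \gamma_i = \gamma$, so only arrangements in $\arr(\cN|\gamma)$ have brick vectors in $F$. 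The inclusion $\gamma \subseteq \Phi(F)$ is then immediate, while $\Phi(F) \subseteq \gamma$ follows because every arc of $H$ is flippable within $\arr(\cN|\gamma)$, so any commutator outside $\gamma$ fails to be a contact in some arrangement of $\arr(\cN|\gamma)$.

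Conversely, given a $k$-admissible $\gamma$ with witness $\Lambda$, reading Remark~\ref{rem:incidencecone} in the opposite direction identifies $I(\Lambda^\contact \ssm \gamma^\contact)$ as the extreme rays of a $k$-face of $C(\Lambda^\contact)$, which by Theorem~\ref{theo:cones} corresponds to a $k$-face $F$ of $\Omega(\cN)$ at $\omega(\Lambda)$; the arguments above then give $F = \Psi(\gamma)$ and $\Phi(F) = \gamma$, so $\Phi$ and $\Psi$ are mutually inverse bijections between the $k$-faces of $\Omega(\cN)$ and the $k$-admissible sets of commutators. I expect the main difficulty to be the reverse inclusion $F \subseteq \Psi(\gamma)$: it requires lifting the facet-wise optimization argument of Theorem~\ref{theo:cones} to a general face through the correspondence between facets of $\Omega(\cN)$ containing $F$ and facets of $C(\Lambda^\contact)$ containing the cone of $F$ at $v$, that is, checking that intersection on the polytope side matches union on the commutator side.
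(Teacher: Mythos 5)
Your argument follows exactly the route the paper indicates (Theorem~\ref{theo:cones}, Remark~\ref{rem:incidencecone}, Proposition~\ref{prop:vertices}), and its structure is sound, but one step is stated imprecisely and is worth repairing. In the inclusion $\Psi(\gamma)\subseteq F$, a flip along a path in the flip graph of $\arr(\cN|\gamma)$ occurs at an intermediate arrangement $\Lambda'$, and the flipped contact corresponds to an arc of $\Lambda'^\contact\ssm\gamma^\contact$, which is not in general an arc of $H=\Lambda^\contact\ssm\gamma^\contact$ as you assert. What rescues the step is Lemma~\ref{lem:connectcomp} applied to the network $\cN\ssm\gamma$: all the graphs $\Lambda'^\contact\ssm\gamma^\contact$, for $\Lambda'\in\arr(\cN|\gamma)$, have the same connected components as $H$, so the two pseudolines $i,j$ involved in any such flip lie in a common component of $H$, and the direction $e_j-e_i$ still lies in $\operatorname{span} I(H)$, which is $\operatorname{aff}(F)-\omega(\Lambda)$. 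Combined with the fact that a face equals the intersection of the polytope with its own affine hull, this yields $\Psi(\gamma)\subseteq F$ as you intended. In the converse direction, you appeal to ``the arguments above'' but should note explicitly that the witness $\Lambda$ of a $k$-admissible set $\gamma$ need not have an acyclic contact graph, so $\omega(\Lambda)$ need not be a vertex; Theorem~\ref{theo:cones} nonetheless describes the cone of $\Omega(\cN)$ at an arbitrary brick vector, and its faces still correspond, with matching dimensions, to the faces of $\Omega(\cN)$ containing $\omega(\Lambda)$, so the remainder of the argument transfers unchanged.
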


\begin{example}[Duplicated networks, continued]\label{exm:duplicated4}
For a connected graph~$\Gamma$ on~$[n]$, consider the duplicated network~$\cZ(\Gamma)$ defined in Example~\ref{exm:duplicated1}. According to Corollary~\ref{coro:faces}, the $k$-faces of $\Omega(\cZ(\Gamma))$ are in bijection with the couples $(\pi,\sigma)$ where $\pi$ is a set of~$n-k$ connected induced subgraphs of~$\Gamma$ whose vertex sets partition~$[n]$, and $\sigma$ is an acyclic orientation on the quotient $\Gamma/\bigcup \pi$.

When $\Gamma=K_n$ is the complete graph on $n$ vertices, the $k$-faces of the brick polytope $\Omega(\cZ(K_n))$ correspond to the ordered $(n-k)$-partitions of~$[n]$. This confirms that $\Omega(\cZ(K_n))$ is combinatorially equivalent to the permutahedron $\Pi_n$.% It is in fact a translate of $\Pi_n$.

When~$\Gamma$ is a tree, we obtain a $k$-face by choosing $n-k$ connected subgraphs of~$\Gamma$ whose vertex sets partition~$[n]$ and an arbitrary orientation on the remaining edges. This clearly corresponds to the $k$-faces of the cube.
\end{example}

We now apply our results to minimal irreducible sorting networks (which support pseudoline arrangements whose contact graphs are trees~---~Definition~\ref{def:minimal}). For these networks, we obtain an affirmative answer to Question~\ref{qu:polytope}:

\begin{theorem}
For any minimal irreducible sorting network~$\cN$, the simplicial complex~$\Delta(\cN)$ is the boundary complex of the polar of the brick polytope~$\Omega(\cN)$. In particular, the graph of $\Omega(\cN)$ is the flip graph $G(\cN)$.
\end{theorem}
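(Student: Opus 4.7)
The plan is to derive both conclusions from the structural results of Section~\ref{subsec:fd}, exploiting the defining feature of minimal irreducible networks: every contact graph $\Lambda^\contact$ is a tree (Definition~\ref{def:minimal}).

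First, I would establish that $\Omega(\cN)$ is a simple polytope of dimension $n-1$. Corollary~\ref{coro:dim} gives the dimension. For simplicity, since every contact graph is a tree (hence acyclic), Corollary~\ref{coro:vc} says that every brick vector is already a vertex of $\Omega(\cN)$. Moreover, the partial order induced by the directed tree $\Lambda^\contact$ has Hasse diagram equal to $\Lambda^\contact$ itself: between the endpoints $i,j$ of a directed edge, the only directed path is the edge itself, because the underlying undirected tree admits a unique path between any two vertices. Theorem~\ref{theo:cones} combined with Remark~\ref{rem:incidencecone}(3) then guarantees that the vertex cone of $\Omega(\cN)$ at each $\omega(\Lambda)$ is simple.

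Once simplicity is in hand, the statement about graphs follows by a counting argument: $\Omega(\cN)$ is a simple $(n-1)$-polytope, so its graph is $(n-1)$-regular; Theorem~\ref{theo:connect} says the same of $G(\cN)$ (since $m - {n \choose 2} = n-1$); and Proposition~\ref{prop:vertices} matches their vertex sets. Since the graph of $\Omega(\cN)$ sits inside $G(\cN)$ by Corollary~\ref{coro:graph}, equality of regularity on a common vertex set forces the two graphs to coincide.

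For the identification of $\Delta(\cN)$ with the boundary complex of $\Omega(\cN)^{\circ}$, I would apply Corollary~\ref{coro:faces}. The key observation is that, in the tree setting, $k$-admissibility collapses to mere membership in $\Delta(\cN)$: for any $\gamma \in \Delta(\cN)$ contained in the contacts of some $\Lambda$, removing the $|\gamma|$ arcs $\gamma^\contact$ from the tree $\Lambda^\contact$ yields a forest with exactly $|\gamma|+1$ components, and contracting that subforest collapses the tree $\Lambda^\contact$ to a tree on $|\gamma|+1$ vertices, which is trivially acyclic. Consequently $\gamma$ is $(n-|\gamma|-1)$-admissible, and Corollary~\ref{coro:faces} specializes to an inclusion-reversing bijection between the $k$-dimensional faces of $\Omega(\cN)$ and the cardinality-$(n-k-1)$ faces of $\Delta(\cN)$. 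Passing to the polar (well defined since $\Omega(\cN)$ is simple) flips the inclusions once more, producing an isomorphism of simplicial complexes between $\Delta(\cN)$ and the boundary complex of $\Omega(\cN)^{\circ}$. The only subtle step in the whole argument is this collapse of $k$-admissibility, but it is transparent once one notices that contracting a subforest of a tree yields a tree.
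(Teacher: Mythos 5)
The proposal is correct and takes essentially the same route as the paper: the core step --- that deleting the $|\gamma|$ arcs of $\gamma^\contact$ from the tree $\Lambda^\contact$ leaves a forest with $|\gamma|+1$ components whose contraction is again a tree, hence acyclic --- is exactly the paper's one-line application of Corollary~\ref{coro:faces}. Your preliminaries establishing simplicity of $\Omega(\cN)$ and matching graph regularity are correct but redundant, since the face correspondence already yields both conclusions of the theorem.
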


\begin{proof}
Let~$\gamma$ be a set of $p$~commutators such that $\cN\ssm\gamma$ is still sorting, and let $\Lambda\in\arr(\cN|\gamma)$. Since the contact graph of $\Lambda$ is an oriented tree, its subgraph $\Lambda^\contact\ssm\gamma^\contact$ has $p+1$ connected components and its quotient $\Lambda^\contact/(\Lambda^\contact\ssm\gamma^\contact)$ is acyclic. Consequently, $\Omega(\cN)$ has a $(n-p-1)$-dimensional face corresponding to~$\gamma$.
\end{proof}

\begin{example}
When~$\Gamma$ is a tree, the polar of the cube~$\Omega(\cZ(\Gamma))$ realizes~$\Delta(\cZ(\Gamma))$.
\end{example}

Reciprocally, note that the dimension of the brick polytope~$\Omega(\cN)$ does not even match that of the simplicial complex~$\Delta(\cN)$ for an irreducible sorting network~$\cN$ which is not minimal. Consequently, the minimal networks are precisely those irreducible networks for which the brick polytope provides a realization of $\Delta(\cN)$. We discuss in more details the relationship between the boundary complex of the brick polytope and the simplicial complex $\Delta(\cN)$ for certain non minimal networks in Section~\ref{sec:multi}.

\begin{example}\label{exm:counterExample}
We finish our combinatorial description of the face structure of the brick polytope with the example of the sorting network represented in \fref{fig:counterExample}. It illustrates that:
\begin{enumerate}
\item All pseudoline arrangements supported by~$\cN$ can appear as vertices of~$\Omega(\cN)$ even for a non-minimal network~$\cN$.
\item Even when all pseudoline arrangements supported by~$\cN$ appear as vertices of~$\Omega(\cN)$, the graph can be a strict subgraph of $G(\cN)$.
\item The brick vector of a pseudoline arrangement~$\Lambda$ supported by~$\cN$ is a simple vertex of~$\Omega(\cN)$ if and only if the contact graph~$\Lambda^\contact$ is acyclic and its Hasse diagram is a tree.
\end{enumerate}

\begin{figure}
  \capstart
  \centerline{\includegraphics[width=\textwidth]{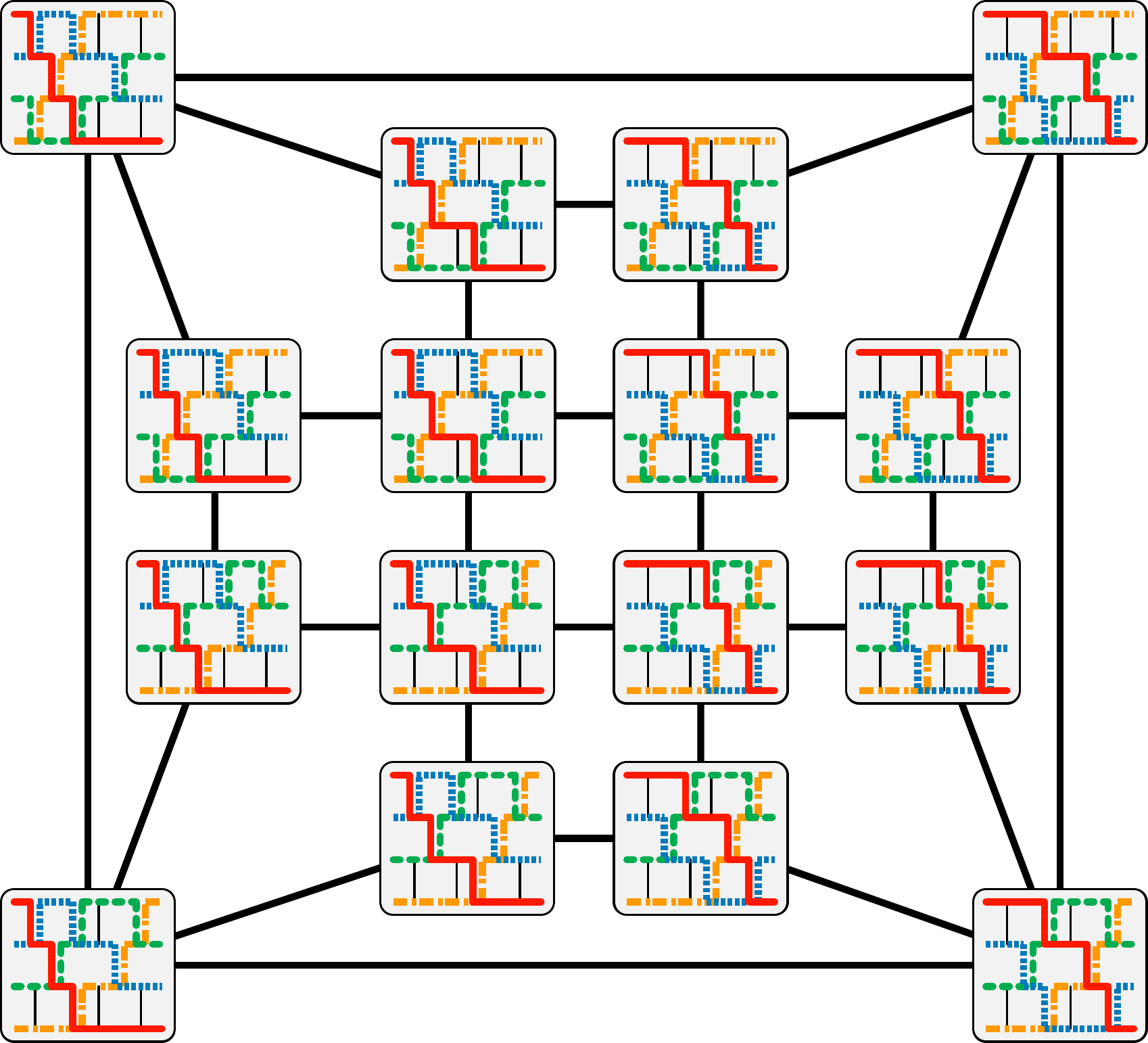}}
  \caption{The Schlegel diagram of the brick polytope~$\Omega(\cN)$ of a sorting network~$\cN$ with $4$ levels and $10$ commutators. Since the graph of flips~$G(\cN)$ is that of the $4$-dimensional cube, $\Omega(\cN)$ has no missing vertex but $4$ missing edges (find them!).}
  \label{fig:counterExample}
\end{figure}
\end{example}

\subsection{Brick polytopes as (positive) Minkowski sums}\label{subsec:minkowskiSum}

Let~$\cN$ be a sorting network with $n$ levels and let~$b$ be a brick of~$\cN$. For any pseudoline arrangement~$\Lambda$ supported by~$\cN$, we denote by~$\omega(\Lambda,b)\in\R^n$  the characteristic vector of the pseudolines of~$\Lambda$ passing above~$b$. We associate to the brick~$b$ of ~$\cN$ the polytope
$$\Omega(\cN,b) \eqdef \conv\set{\omega(\Lambda,b)}{\Lambda \in \arr(\cN)}\subset\R^n.$$
These polytopes provide a Minkowski sum decomposition of the brick polytope~$\Omega(\cN)$:

\begin{proposition}\label{prop:minkowskiSum}
The brick polytope~$\Omega(\cN)$ is the Minkowski sum of the polytopes~$\Omega(\cN,b)$ associated to all the bricks $b$ of~$\cN$.
\end{proposition}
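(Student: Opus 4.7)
My plan is to prove both inclusions separately. The easy direction rests on the pointwise identity
\[
\omega(\Lambda) \;=\; \sum_b \omega(\Lambda,b), \qquad \Lambda\in\arr(\cN),
\]
where $b$ ranges over all bricks of~$\cN$. This is Fubini: both sides have $i$\textsuperscript{th} coordinate equal to the number of bricks below the $i$\textsuperscript{th} pseudoline of~$\Lambda$, since $\omega(\Lambda,b)_i$ is by definition the indicator that the $i$\textsuperscript{th} pseudoline of~$\Lambda$ lies above brick~$b$. Because $\omega(\Lambda,b)\in\Omega(\cN,b)$, every brick vector $\omega(\Lambda)$ already lies in the Minkowski sum $\sum_b\Omega(\cN,b)$, and passing to convex hulls yields $\Omega(\cN)\subseteq\sum_b\Omega(\cN,b)$.

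For the reverse inclusion I would use the support-function criterion: two polytopes $P,Q$ coincide iff their support functions $h_P(\varphi)\eqdef\max_{x\in P}\dotprod{\varphi}{x}$ coincide for every $\varphi\in\R^n$, and support functions are additive under Minkowski sums. The pointwise identity and linearity immediately give
\[
h_{\Omega(\cN)}(\varphi) \;=\; \max_\Lambda \sum_b \dotprod{\varphi}{\omega(\Lambda,b)} \;\le\; \sum_b \max_\Lambda \dotprod{\varphi}{\omega(\Lambda,b)} \;=\; h_{\sum_b\Omega(\cN,b)}(\varphi),
\]
which only re-proves the easy inclusion. The substance of the proposition is the reverse inequality, and this amounts to the following combinatorial claim: for every direction $\varphi\in\R^n$, there is a single arrangement $\Lambda^*\in\arr(\cN)$ which simultaneously maximizes $\dotprod{\varphi}{\omega(\cdot,b)}$ at every brick~$b$.

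The construction of such a simultaneous maximizer is the crux of the argument and the main obstacle. I would build $\Lambda^*$ by a greedy left-to-right sweep of the commutators of~$\cN$: at each commutator carrying pseudolines $\alpha$ (above) and $\beta$ (below) that have not yet crossed, insert a contact when $\varphi_\alpha\ge\varphi_\beta$ and a crossing when $\varphi_\alpha<\varphi_\beta$, overriding to a crossing whenever it is the pair's last remaining meeting in~$\cN$ so that each pair indeed crosses exactly once (this is where the sorting property of~$\cN$ is used to guarantee a valid arrangement). Simultaneous optimality at every brick can then be established by a local exchange argument: any flip in the contact graph of $\Lambda^*$ transports $\omega(\Lambda^*,b)$ by a positive multiple of $\pm(e_\beta - e_\alpha)$ precisely for the bricks ``between'' the old contact and its corresponding crossing, and fixes all other bricks. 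The greedy rule is designed so that no such flip can strictly increase $\dotprod{\varphi}{\omega(\cdot,b)}$ at any affected brick, which, combined with the support-function reduction above, closes the proof.
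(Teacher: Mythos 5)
Your first inclusion $\Omega(\cN)\subseteq\sum_b\Omega(\cN,b)$ is exactly the paper's argument, and your identification of the crux (finding one arrangement that simultaneously optimizes every $\Omega(\cN,b)$ in a given direction) is also correct. But the way you propose to produce this simultaneous optimizer differs from the paper and has two genuine problems. First, the greedy left-to-right sweep with the ``last remaining meeting'' override is not obviously well-defined: whether a pair $(\alpha,\beta)$ will ever be adjacent again depends on the future trajectories of \emph{all} pseudolines, which depend on future greedy choices, which in turn depend on the decision you are currently trying to make. You invoke ``the sorting property of $\cN$'' to guarantee a valid arrangement, but a sorting network in this paper's sense is only one that supports \emph{some} arrangement; it does not by itself certify that this particular lookahead-dependent sweep terminates with every pair crossed exactly once. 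The paper sidesteps the entire construction: it simply takes $\Lambda_f\in\arr(\cN)$ to be an arrangement extremizing the linear functional on $\Omega(\cN)$ itself --- such an arrangement exists trivially by finiteness --- and then argues that this same $\Lambda_f$ extremizes each $\Omega(\cN,b)$.

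Second, even granting the greedy construction, your closing step only establishes \emph{local} optimality: you check that no single flip from $\Lambda^*$ improves $\dotprod{\varphi}{\omega(\cdot,b)}$ at any brick. What the support-function reduction requires is \emph{global} optimality of $\omega(\Lambda^*,b)$ in $\Omega(\cN,b)$ for every $b$, and the passage from local to global is not automatic because flips need not enumerate all polytope edges of the individual summands $\Omega(\cN,b)$. The missing ingredient is exactly what Theorem~\ref{theo:cones} supplies: since the cone of $\Omega(\cN)$ at any brick vector $\omega(\Lambda)$ is generated by the flip directions, a local extremizer of $f\circ\omega$ is a global one, and from any arrangement one can reach it along an $f$-monotone flip path; sign-coherence of the per-brick variations along such a path then transfers the global optimality to each summand. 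The paper's ``Consequently'' is also terse here, but it is resting on Theorem~\ref{theo:cones}, whereas your write-up supplies no such backstop. If you keep the support-function framing, you should replace the greedy construction by the extremizer $\Lambda_f$ of $f$ on $\Omega(\cN)$, observe the sign-coherence of flip variations, and then invoke Theorem~\ref{theo:cones} (via a monotone flip path) to upgrade local to global.
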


\begin{proof}
Since $\omega(\Lambda) = \sum \omega(\Lambda,b)$ for any pseudoline arrangement~$\Lambda$ supported by~$\cN$, $\Omega(\cN)$ is included in the Minkowski sum~$\sum \Omega(\cN,b)$. To prove equality, we thus only have to prove that any vertex of~$\sum \Omega(\cN,b)$ is also a vertex of~$\Omega(\cN)$. 

Let $f:\R^n\to\R$ be a linear function, and $\Lambda, \Lambda'$ be two pseudoline arrangements related by a flip involving their $i$\textsuperscript{th} and $j$\textsuperscript{th} pseudolines. If a brick~$b$ of~$\cN$ is not located between the $i$\textsuperscript{th} pseudolines of~$\Lambda$ and~$\Lambda'$, then $f(\omega(\Lambda,b)) = f(\omega(\Lambda',b))$. Otherwise, the variation $f(\omega(\Lambda,b)) - f(\omega(\Lambda',b))$ has the same sign as the variation $f(\omega(\Lambda)) - f(\omega(\Lambda'))$. Consequently, the pseudoline arrangement~$\Lambda_f$ supported by~$\cN$ which minimizes $f$ on $\Omega(\cN)$, also minimizes $f$ on $\Omega(\cN,b)$ for each brick~$b$ of~$\cN$.

Now let $v$ be any vertex of~$\sum \Omega(\cN,b)$. Let $f:\R^n\to\R$ denote a linear function which is minimized by~$v$ on $\sum \Omega(\cN,b)$. Then $v$ is the sum of the vertices which minimize~$f$ in each summand $\Omega(\cN,b)$. Consequently, we obtain that $v=\sum\omega(\Lambda_f,b)=\omega(\Lambda_f)$ is a vertex of~$\Omega(\cN)$.
\end{proof}

\begin{remark}
This Minkowski sum decomposition comes from the fact that the summation and convex hull in the definition of the brick polytope commute:
$$\Omega(\cN) \eqdef \conv_\Lambda \sum\nolimits_b \omega(\Lambda,b) \,=\, \sum\nolimits_b \conv_\Lambda \omega(\Lambda,b) \eqfed \sum\nolimits_b \Omega(\cN,b),$$
where the index~$b$ of the sums ranges over the bricks of~$\cN$ and the index~$\Lambda$ of the convex hulls ranges over the pseudoline arrangements supported by~$\cN$.
\end{remark}

Observe that the vertex set of~$\Omega(\cN,b)$ is contained in the vertex set of a hypersimplex, since the number of pseudolines above~$b$ always equals the depth of~$b$. Furthermore, since~$\Omega(\cN,b)$ is a summand in a Minkowski decomposition of~$\Omega(\cN)$, all the edges of the former are parallel to that of the latter, and thus to that of the standard simplex $\simplex_{[n]} \eqdef \conv\set{e_i}{i\in [n]}$. Consequently, the polytope~$\Omega(\cN,b)$ is a \defn{matroid polytope}, \ie the convex set of the characteristic vectors of the bases of a matroid (the ground set of this matroid is the set of pseudolines and its rank is the depth of~$b$). 

\begin{example}[$2$-level networks, continued]
Let $\cX_m$ be the sorting network formed by two levels and $m$ commutators, and let~$b$ be a brick of~$\cX_m$. For every pseudoline arrangement~$\Lambda$ supported by~$\cX_m$, we have $\omega(\Lambda,b)=(1,0)$ if the two pseudolines of~$\Lambda$ cross before~$b$ and $\omega(\Lambda,b)=(0,1)$ otherwise. Thus, the polytope $\Omega(\cX_m,b)$ is the segment with endpoints $(1,0)$ and $(0,1)$, and the brick polytope~$\Omega(\cX_m)$ is the Minkowski sum of $m-1$ such segments.
\end{example}

\begin{example}[Duplicated networks, continued]\label{exm:duplicated5}
For a connected graph~$\Gamma$, consider the duplicated network~$\cZ(\Gamma)$ obtained by duplicating the commutators of a reduced network~$\cN$ according to the edges of~$\Gamma$~---~see Example~\ref{exm:duplicated1}. This network has two kinds of bricks: those located between two adjacent commutators (which replace a commutator of~$\cN$) and the other ones (which correspond to the bricks of~$\cN$). For any brick~$b$ of the latter type, the polytope $\Omega(\cZ(\Gamma),b)$ is still a single point. Now let~$b$ be a brick of~$\cZ(\Gamma)$ located between a pair of adjacent commutators corresponding to the edge $\{i,j\}$ of~$\Gamma$. Then $\Omega(\cZ(\Gamma),b)$ is (a translate of) the segment~$[e_i,e_j]$. Summing the contributions of all bricks, we obtain that the brick polytope~$\Omega(\cZ(\Gamma))$ is the Minkowski sum of all segments $[e_i,e_j]$ for $\{i,j\}\in \Gamma$. Such a polytope is usually called a \defn{graphical zonotope}. When $\Gamma = K_n$, the permutahedron $\Pi_n = \Omega(\cZ(K_n))$ is the Minkowski sum of the segments $[e_i,e_j]$ for all distinct $i,j\in[n]$. When $\Gamma$ is a tree, the cube $\Omega(\cZ(\Gamma))$ is the Minkowski sum of linearly independent segments.
\end{example}

\subsection{Brick polytopes and generalized permutahedra}\label{subsec:generalizedPermutahedra}

Our brick polytopes are instances of a well-behaved class of polytopes studied in~\cite{Postnikov, ArdilaBenedettiDoker, PostnikovReinerWilliams}:

\begin{definition}[\cite{Postnikov}]
A \defn{generalized permutahedra} is a polytope whose inequality description is of the form:
$$\GP{Z}{z_I} \eqdef \set{\begin{pmatrix} x_1 \\ \vdots \\ x_n\end{pmatrix}\in\R^n}{\sum_{i=1}^n x_i = z_{[n]} \text{ and } \sum_{i\in I} x_i \ge z_I \text{ for } I\subset[n]}$$
for a family $\{z_I\}_{I\subset[n]}\in\R^{2^{[n]}}$ such that $z_I + z_J \le z_{I \cup J} + z_{I \cap J}$ for all $I,J \subset [n]$.
\end{definition}

In other words, a generalized permutahedron is obtained as a deformation of the classical permutahedron by moving its facets while keeping the direction of their normal vectors and staying in its deformation cone~\cite{PostnikovReinerWilliams}.
This family of polytopes contains many relevant families of combinatorial polytopes: permutahedra, associahedra, cyclohedra (and more generally, all graph-asso\-ciahedra~\cite{CarrDevadoss}), etc.

Since $\sGP{Z}{z_I} + \sGP{Z}{z'_I} = \sGP{Z}{z_I+z'_I}$, the class of generalized associahedra is closed by Minkowski sum and difference (a Minkowski difference $P-Q$ of two polytopes $P,Q\subset\R^n$ is defined only if there exists a polytope $R$ such that ${P = Q + R}$).
Consequently, for any $\{y_I\}_{I\subset[n]}\in\R^{2^{[n]}}$, the Minkowski sum and difference
$$\GP{Y}{y_I} \eqdef \sum_{I\subset[n]} y_I\simplex_I$$
of faces $\simplex_I \eqdef \conv\set{e_i}{i\in I}$ of the standard simplex $\simplex_{[n]}$ is a generalized permutahedron. Reciprocally, it turns out that any generalized permutahedron $\sGP{Z}{z_I}$ can be decomposed as such a Minkowski sum and difference $\sGP{Y}{y_I}$, and that $\{y_I\}$ is derived from $\{z_I\}$ by M\"obius inversion when all the inequalities defining $\sGP{Z}{z_I}$ are tight:

\begin{proposition}[\cite{Postnikov, ArdilaBenedettiDoker}]\label{prop:minkowskiSumDiff}
Every generalized permutahedron can be written uniquely as a Minkowski sum and difference of faces of the standard simplex:
$$\GP{Z}{z_I} = \GP{Y}{y_I}, \quad \text{where} \quad y_I = \sum_{J \subset I} (-1)^{|I\ssm J|}z_J$$
if all inequalities $\sum_{i\in I} x_i\ge z_I$ are tight.
\end{proposition}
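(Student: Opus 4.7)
The plan is to leverage the additivity of support functions under (signed) Minkowski sums together with Möbius inversion on the Boolean lattice $2^{[n]}$. Under the convention of the paper, the generalized permutahedron $\GP{Z}{z_I}$ is exactly the polytope whose ``min-support function'' in each direction $\one_I \eqdef \sum_{i\in I} e_i$ equals $z_I$, provided all the defining inequalities are tight. Since this support function is additive under Minkowski sums and differences (whenever the latter are defined), knowing the values $z_I$ for each $I\subseteq[n]$ determines the polytope uniquely.

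The key computation is to evaluate this support function on a face $\simplex_J$ of the standard simplex. Writing a point $x \in \simplex_J$ as a convex combination $x = \sum_{j\in J} \lambda_j e_j$ with $\lambda_j \ge 0$ and $\sum_j \lambda_j = 1$, one reads off immediately
$$\min_{x\in\simplex_J} \dotprod{\one_I}{x} \;=\; \min \sum_{j \in I \cap J} \lambda_j \;=\; \begin{cases} 1 & \text{if } J\subseteq I, \\ 0 & \text{otherwise.} \end{cases}$$
Assuming first that all $y_J \ge 0$, additivity of support functions therefore gives that the Minkowski sum $\sum_{J\subseteq[n]} y_J \simplex_J$ has tight right-hand sides $z_I = \sum_{J \subseteq I} y_J$. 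This is a triangular linear system on the Boolean lattice $2^{[n]}$, which inverts directly by Möbius inversion (the Möbius function of an interval $[\emptyset,I]$ being $(-1)^{|I\ssm J|}$) to yield the stated formula $y_I = \sum_{J \subseteq I} (-1)^{|I\ssm J|} z_J$. Uniqueness of the decomposition follows, since Möbius inversion is an explicit bijection between the families $\{y_I\}$ and $\{z_I\}$.

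The main obstacle is handling the case when some coefficients $y_I$ are negative, in which case the expression $\sum_I y_I \simplex_I$ must be interpreted as the Minkowski difference $\sum_{y_I > 0} y_I \simplex_I \;-\; \sum_{y_I < 0} (-y_I) \simplex_I$. One must verify that this is a genuine polytope (not merely a virtual one) and that it coincides with $\GP{Z}{z_I}$. The cleanest route is to establish, again via support functions, the polytopal identity
$$\GP{Z}{z_I} \;+\; \sum_{y_I<0} (-y_I)\,\simplex_I \;=\; \sum_{y_I>0} y_I\,\simplex_I,$$
which reduces on each direction $\one_I$ to the combinatorial relation $z_I = \sum_{J\subseteq I} y_J$ computed above. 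The fact that both sides genuinely define polytopes, rather than formal differences of virtual ones, is guaranteed by the submodularity condition $z_I + z_J \le z_{I\cup J} + z_{I\cap J}$ characterizing generalized permutahedra: this is precisely the condition ensuring that the Minkowski difference on the left exists as a convex polytope, tying the combinatorial Möbius inversion to the geometric decomposition.
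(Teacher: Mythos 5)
The paper does not prove this proposition: it is quoted from Postnikov and Ardila--Benedetti--Doker, so there is no internal argument to compare against. Your proof is correct and follows the standard route from those references: read the tight right-hand sides $z_I$ as the minimum of $\dotprod{\one_I}{\cdot}$ over the polytope, compute that this minimum on a simplex face $\simplex_J$ is $1$ if $J\subseteq I$ and $0$ otherwise, use additivity of the min-support function under Minkowski sums to obtain $z_I=\sum_{J\subseteq I}y_J$, and invert by M\"obius inversion on the Boolean lattice; the signed case is handled by rewriting the Minkowski difference as a genuine sum on both sides. The one step you should make explicit is why it suffices to check your displayed polytopal identity only in the directions $\one_I$: equality of support functions at finitely many directions does not in general imply equality of polytopes. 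Here it does because every term on both sides (the generalized permutahedron, guaranteed nonempty by submodularity, and the faces $\simplex_J$) has normal fan coarsening the braid fan, so all the polytopes in play have support functions that are linear on each braid cone and hence determined by their values on the rays $\one_I$. Also, the role of submodularity is slightly misattributed in your last sentence: it guarantees that $\sGP{Z}{z_I}$ is a nonempty polytope (so the left-hand side is a legitimate Minkowski sum), while the existence of the Minkowski difference is then the content of the identity itself, not a separate input.
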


\begin{example}
The classical permutahedron can be written as
$$\Pi_n = \conv\set{(\sigma(1), \dots, \sigma(n))^T}{\sigma\in\mathfrak{S}_n} = \GP{Z}{\frac{|I|(|I|+1)}{2}} = \sum_{|I| = 2} \simplex_I.$$
\end{example}

The Minkowski decomposition of Proposition~\ref{prop:minkowskiSumDiff} is useful in particular to compute the volume of the generalized permutahedra~\cite{Postnikov}.

All our brick polytopes are generalized permutahedra (as Minkowski sums of matroid polytopes).
It raises three questions about our construction:

\begin{question}
Which generalized permutahedra are brick polytopes?
\end{question}

For example, we obtain all graphical zonotopes (Example~\ref{exm:duplicated1}) and all associahedra (Section~\ref{sec:associahedra}). % and the cyclohedra (Remark~\ref{rem:cyclohedra}). 
However, the only $2$-dimensional brick polytopes are the square, the pentagon and the hexagon: no brick polytope is a triangle.

\begin{question}\label{qu:coeffsMinkowskiSumDiff}
How to compute efficiently the coefficients $\{y_I\}$ in the Minkowski decomposition of a brick polytope~$\Omega(\cN)$ into dilates of faces of the standard simplex?
\end{question}

Lange studies this question in~\cite{Lange} for all associahedra of Hohlweg and Lange~\cite{HohlwegLange} (see also Remark~\ref{rem:minkowskiSumAssociahedra}). In general, observe that the Minkowski sum decomposition we obtain in Proposition~\ref{prop:minkowskiSum} has only positive coefficients, but its summands are matroid polytopes which are not necessarily simplices. In contrast, the Minkowski decomposition of Proposition~\ref{prop:minkowskiSumDiff} has nice summands but requires sums and differences. In general, the two decompositions of Propositions~\ref{prop:minkowskiSum} and~\ref{prop:minkowskiSumDiff} are therefore different. This last observation raises an additional question:

\begin{question}
Which generalized permutahedra can be written as a Minkowski sum of matroid polytopes?
\end{question}

%%%%%%%%%%%%%%%%%%%

\section{Hohlweg \& Lange's associahedra, revisited}\label{sec:associahedra}

In this section, we first recall the duality between triangulations of a convex polygon and pseudoline arrangements supported by the $1$-kernel of a reduced alternating sorting network~\cite{PilaudPocchiola}. Based on this duality, we observe that the brick polytopes of these particular networks specialize to Hohlweg \& Lange's realizations of the associahedron~\cite{HohlwegLange}.

\subsection{Duality}\label{subsec:duality}

Remember that we call reduced alternating sorting network any network with $n$ levels and ${n \choose 2}$ commutators and such that the commutators adjacent to each intermediate level are alternatively located above and below it. Such a network supports a unique pseudoline arrangement, whose first and last pseudolines both touch the top and the bottom level, and whose intermediate pseudolines all touch either the top or the bottom level.

To a word~$x\in\{a,b\}^{n-2}$, we associate two dual objects~---~see \fref{fig:alternating}:

\begin{enumerate}
\item $\cN_x$ denotes the reduced alternating sorting network such that the $(i+1)$\textsuperscript{th} pseudoline touches its top level if $x_i=a$ and its bottom level if $x_i=b$, for all~${i\in[n-2]}$.
\item $\cP_x$ denotes the $n$-gon obtained as the convex hull of $\set{p_i}{i\in[n]}$ where $p_1=(1,0)$, $p_n=(n,0)$ and $p_{i+1}$ is the point of the circle of diameter~$[p_1,p_n]$ with abscissa~$i+1$ and located above~$[p_1,p_n]$ if $x_i=a$ and below~$[p_1,p_n]$ if~$x_i=b$, for all $i\in[n-2]$.
\end{enumerate}
For any distinct~$i,j\in[n]$, we naturally both label by~$\{i,j\}$ the diagonal $[p_i,p_j]$ of~$\cP_x$ and the commutator of~$\cN_x$ where cross the $i$\textsuperscript{th} and $j$\textsuperscript{th} pseudolines of the unique pseudoline arrangement supported by~$\cN_x$. Note that the commutators incident to the first and last level of~$\cN_x$ correspond to the edges of the convex hull of~$\cP_x$.

In \fref{fig:alternating}, we have represented~$\cN_x$ and~$\cP_x$ for $x\in\{bbb, aab, aba\}$. The five missing reduced alternating sorting networks with $5$ levels are obtained by reflection of these three with respect to the horizontal or vertical axis.

\begin{figure}[b]
  \capstart
  \centerline{\includegraphics[width=\textwidth]{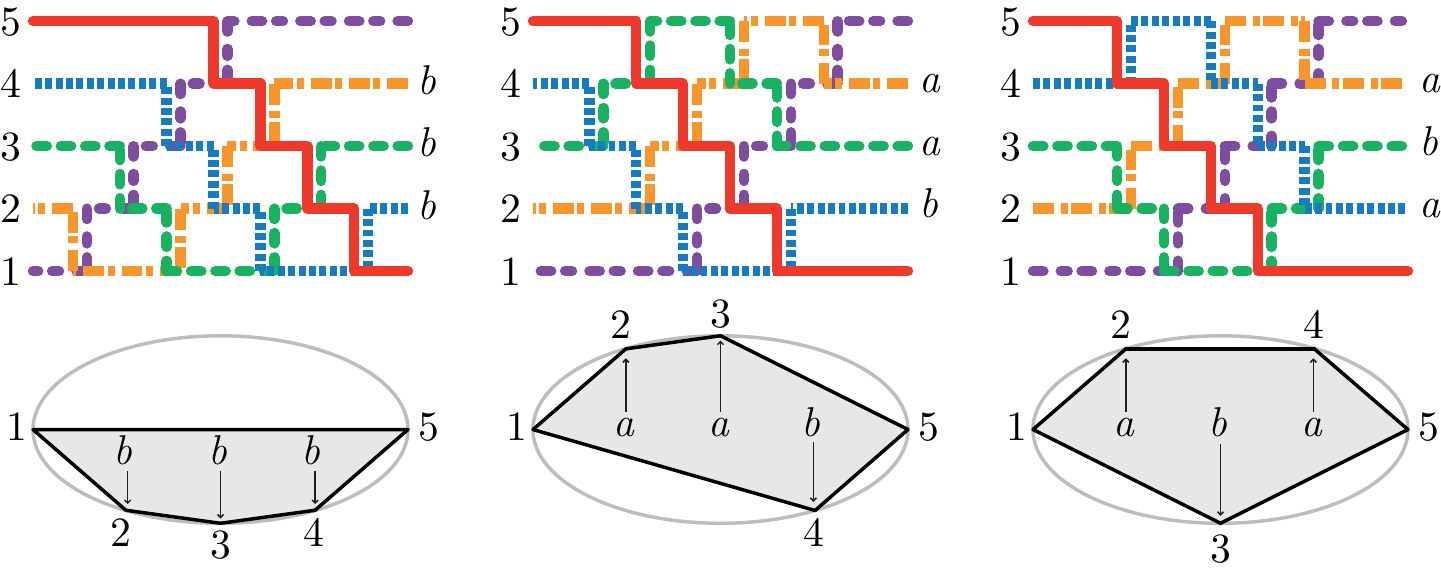}}
  \caption{The networks~$\cN_x$ and the polygons~$\cP_x$ for the words $bbb$, $aab$ and $aba$. The leftmost sorting network is the ``bubble sort'', while the rightmost is the ``even-odd transposition sort''.}
  \label{fig:alternating}
\end{figure}

We call \defn{$1$-kernel} of a network~$\cN$ the network~$\cN^1$ obtained from~$\cN$ by erasing its first and last horizontal lines, as well as all commutators incident to them. For a word $x\in\{a,b\}^{n-2}$, the network~$\cN_x^1$ has~$n-2$ levels and~${n \choose 2}-n$ commutators. Since we erased the commutators between consecutive pseudolines on the top or bottom level of~$\cN_x$, the remaining commutators are labeled by the internal diagonals of~$\cP_x$. The pseudoline arrangements supported by~$\cN_x^1$ are in bijection with the triangulations of~$\cP_x$ through the following duality~---~see \fref{fig:dualite}:

\begin{proposition}[\cite{PilaudPocchiola}]\label{prop:duality}
Fix a word~$x\in\{a,b\}^{n-2}$. The set of commutators of~$\cN_x$ labeled by the internal diagonals of a triangulation~$T$ of $\cP_x$ is the set of contacts of a pseudoline arrangement~$T^*$ supported by $\cN_x^1$. Reciprocally the internal diagonals of $\cP_x$ which label the contacts of a pseudoline arrangement supported by~$\cN_x^1$ form a triangulation of $\cP_x$.
\end{proposition}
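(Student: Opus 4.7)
The plan is to verify the claimed bijection in two stages: a cardinality match, and a combinatorial equivalence between crossings of diagonals in $\cP_x$ and incompatibilities of contacts in $\cN_x^1$.

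First I would check the numerology. A triangulation of the $n$-gon $\cP_x$ has exactly $n-3$ internal diagonals. On the other side, $\cN_x^1$ has $n-2$ levels and $m = \binom{n}{2} - n$ commutators (we remove the $n$ commutators of $\cN_x$ labeled by the boundary edges of $\cP_x$), so every pseudoline arrangement supported by $\cN_x^1$ has $m - \binom{n-2}{2} = \binom{n}{2} - n - \binom{n-2}{2} = n-3$ contacts. Hence both sides of the proposed correspondence have cardinality $n-3$.

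The key technical step is then the equivalence: \emph{two internal diagonals $[p_i,p_j]$ and $[p_k,p_l]$ of $\cP_x$ cross in $\cP_x$ if and only if the commutators of $\cN_x$ labeled $\{i,j\}$ and $\{k,l\}$ cannot both be contacts of a single pseudoline arrangement supported by $\cN_x^1$.} To see this, one must use the crucial interplay between the structure of $\cN_x$ and $\cP_x$: the fact that vertex $p_{i+1}$ lies above or below $[p_1,p_n]$ in $\cP_x$ according to whether $x_i = a$ or $x_i=b$ exactly mirrors the fact that the $(i+1)$\textsuperscript{st} pseudoline of the unique arrangement supported by $\cN_x$ touches the top or bottom level. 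Given this compatibility, one checks by a local case analysis on the cyclic order of $\{i,j,k,l\} \subset [n]$ that the commutators $\{i,j\}$ and $\{k,l\}$ can coexist as contacts precisely when the four indices are nested or disjoint rather than interleaved, i.e.\ precisely when the diagonals do not cross in $\cP_x$.

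Once this equivalence is in hand, the proposition follows directly. Forward direction: given a triangulation $T$, declare the commutators of $\cN_x^1$ labeled by the internal diagonals of $T$ to be contacts and every other commutator of $\cN_x^1$ to be a crossing; since diagonals of $T$ are pairwise non-crossing, the equivalence shows these $n-3$ commutators are pairwise compatible as contacts, and the count from the first step together with Theorem~\ref{theo:connect} (via $\arr(\cN_x^1 \ssm \gamma)$ being non-empty for $\gamma$ the corresponding contact set) guarantees that this data is realized by a genuine pseudoline arrangement~$T^*$. Reverse direction: the labels of the $n-3$ contacts of any $\Lambda \in \arr(\cN_x^1)$ give $n-3$ pairwise compatible commutators, hence $n-3$ pairwise non-crossing internal diagonals of $\cP_x$, which necessarily form a triangulation.

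The main obstacle is establishing the crossing-incompatibility equivalence, which requires a careful geometric dictionary between the ``above/below'' positions of the vertices $p_i$ in $\cP_x$ and the top/bottom-touching behavior of the pseudolines of $\cN_x$. A cleaner alternative, which avoids the case analysis, is to exhibit a base correspondence (for instance, the fan triangulation of $\cP_x$ at $p_1$ and the greedy pseudoline arrangement $\Gamma(\cN_x^1)$) and then show that a flip in $\cP_x$ (swapping one diagonal of a quadrilateral for the other) translates under the proposed map into a flip of a contact on $\cN_x^1$; connectivity of both flip graphs (Theorem~\ref{theo:connect} on the pseudoline side, and the classical connectivity of triangulation flip graphs on the polygon side) then upgrades the base correspondence to the full bijection.
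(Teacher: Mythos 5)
The paper does not actually prove this proposition --- it is cited from \cite{PilaudPocchiola} --- so there is no internal proof to compare against. Judging your proposal on its own terms: the cardinality count is correct ($n-3$ contacts on $\cN_x^1$, $n-3$ internal diagonals of a triangulation), and the idea of a crossing/incompatibility dictionary is reasonable, but the step from pairwise compatibility to simultaneous realizability has a genuine gap. Even granting the equivalence ``two commutators of $\cN_x^1$ can coexist as contacts iff the corresponding diagonals do not cross,'' you only obtain that the $n-3$ commutators labeled by a triangulation are \emph{pairwise} realizable as contacts; to conclude that they are \emph{simultaneously} realizable --- i.e.\ that $\cN_x^1\ssm\gamma$ is still a sorting network --- you are implicitly assuming that $\Delta(\cN_x^1)$ is a flag complex. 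Theorem~\ref{theo:connect} cannot supply this: it describes the flip graph of a network already known to be sorting and says nothing about when deleting a set of commutators preserves that property. Flagness is exactly what fails for $k\ge 2$ (a $(k+1)$-crossing is a condition on $k+1$ diagonals, not on pairs), so it is a genuine feature of the $k=1$ case that must be argued, not waved through.

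Your alternative sketch --- establish a single base correspondence and propagate it under flips via connectivity of both flip graphs --- is the sounder route, and it sidesteps the flagness issue entirely. What remains to verify there is (a) a concrete base case (the fan triangulation at $p_1$ versus the greedy arrangement $\Gamma(\cN_x^1)$; one must check these actually match under the labeling, which does depend on $x$), and (b) that when a diagonal $e$ of $T$ is flipped to the common bisector $f$ of the two adjacent triangles, the commutator labeled $f$ is precisely the crossing of the two pseudolines of $T^*$ incident to the contact labeled $e$ --- the ``common bisector $=$ crossing'' statement recorded in the paper just after the proposition. That local verification is unavoidable, but it is strictly lighter than a case analysis over all four-element index sets, and once done, connectivity on both sides closes the argument.
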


\begin{figure}
  \capstart
  \centerline{\includegraphics[scale=1]{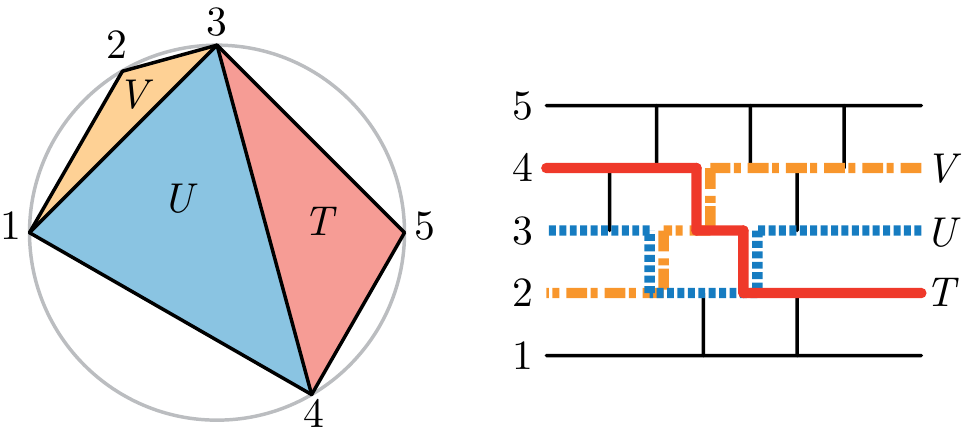}}
  \caption{A triangulation $T$ of $\cP_{aab}$ and its dual pseudoline arrangement $T^*$ on the $1$-kernel of the sorting network~$\cN_{aab}$.}
  \label{fig:dualite}
\end{figure}

The dual pseudoline arrangement $T^*$ of a triangulation $T$ of $\cP_x$ has one pseudoline $\Delta^*$ dual to each triangle $\Delta$ of $T$. A commutator is the crossing between two pseudolines $\Delta^*$ and $\Delta'^*$ of $T^*$ if it is labeled by the common bisector of the triangles $\Delta$ and $\Delta'$ (a \defn{bisector} of $\Delta$ is an edge incident to one vertex of $\Delta$ and which separates its remaining two vertices). A commutator is a contact between $\Delta^*$ and $\Delta'^*$ if it is labeled by a common edge of $\Delta$ and $\Delta'$. Consequently, this duality defines an isomorphism between the graph of flips on pseudoline arrangements supported by~$\cN_x^1$ and the graph of flips on triangulations of~$\cP_x$. Furthermore, we obtain the following interpretation of the contact graph of~$T^*$:

\begin{lemma}\label{lem:tree}
The contact graph $(T^*)^\contact$ of the dual pseudoline arrangement $T^*$ of a triangulation~$T$ is precisely the dual tree of $T$, with some orientations on the edges.
\end{lemma}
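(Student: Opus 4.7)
The plan is to unpack the duality from Proposition~\ref{prop:duality} and the description immediately following it, and observe that both the vertex set and the edge set of $(T^*)^\contact$ match those of the dual tree of $T$ by construction.

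First, I would identify the vertex sets. By definition, the vertices of the contact graph $(T^*)^\contact$ are the pseudolines of $T^*$, and by the duality recalled before the lemma, these pseudolines $\Delta^*$ are in bijection with the triangles $\Delta$ of $T$. The vertices of the dual tree of $T$ are by definition the triangles of $T$, so the two vertex sets agree under the map $\Delta \leftrightarrow \Delta^*$.

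Next I would match the edge sets. By Proposition~\ref{prop:duality}, the contacts of $T^*$ are precisely the commutators labeled by the internal diagonals of $T$. The text following Proposition~\ref{prop:duality} states that such a contact, labeled by a diagonal $d$, occurs between the two pseudolines $\Delta^*$ and $\Delta'^*$ exactly when $d$ is a common edge of the triangles $\Delta$ and $\Delta'$. Since every internal diagonal $d$ of $T$ is shared by exactly two triangles of $T$, the contacts of $T^*$ are in bijection with the pairs of triangles of $T$ sharing an edge, which is exactly the edge set of the dual tree of $T$. Combined with the bijection of vertex sets, this gives a graph isomorphism between the underlying undirected graph of $(T^*)^\contact$ and the dual tree of $T$.

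Finally, the orientation of each edge is inherited from the definition of the contact graph: the edge corresponding to a contact is oriented from the pseudoline passing above the commutator to the pseudoline passing below it. This yields some orientation on each edge of the dual tree, so $(T^*)^\contact$ is the dual tree of $T$ with some orientations on its edges, as claimed. No step is really delicate here; the main point is that each ingredient (vertices, edges, orientation) is fixed directly by the duality already set up, so the proof should amount to a short verification rather than a substantive argument.
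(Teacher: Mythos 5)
Your proof is correct and matches the paper's own approach: the paper does not give a separate argument for this lemma but treats it as an immediate consequence of the duality description preceding it (pseudolines correspond to triangles, contacts correspond to common edges), which is exactly what you spell out. Nothing is missing.
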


\begin{remark}
This duality can be extended to any reduced sorting network. First, a reduced sorting network~$\cN$ with $n$ levels can be seen as the dual arrangement of a set~$\cP$ of $n$ points in a topological plane. Second, the pseudoline arrangements which cover the $1$-kernel of~$\cN$ correspond to the pseudotriangulations of~$\cP$. We refer to~\cite{PilaudPocchiola} or~\cite[Chapter~3]{Pilaud} for details. In Section~\ref{sec:multi}, we recall another similar duality between $k$-triangulations of the $n$-gon and pseudoline arrangements supported by the $k$-kernel of a reduced alternating sorting network with $n$ levels.
\end{remark}

\subsection{Associahedra}

Let $\cN_x$ be a reduced alternating sorting network with $n$ levels. According to Proposition~\ref{prop:duality} and Lemma~\ref{lem:tree}, its $1$-kernel~$\cN_x^1$ is a minimal network: the pseudoline arrangements it supports correspond to triangulations of~$\cP_x$ and their contact graphs are the dual trees of these triangulations (with some additional orientations). Consequently, the brick polytope~$\Omega(\cN_x^1)$ is a simple $(n-3)$-dimensional polytope whose graph is isomorphic to the graph of flips~$G(\cN_x^1)$. Since this graph is isomorphic to the graph of flips on triangulations of the polygon~$\cP_x$, we obtain many realizations of the $(n-3)$-dimensional associahedron with integer coordinates:

\begin{proposition}
For any word~$x\in\{a,b\}^{n-2}$, the simplicial complex of crossing-free sets of internal diagonals of the convex $n$-gon~$\cP_x$ is (isomorphic to) the boundary complex of the polar of~$\Omega(\cN_x^1)$. Thus, the brick polytope~$\Omega(\cN_x^1)$ is a realization of the $(n-3)$-dimensional associahedron.
\end{proposition}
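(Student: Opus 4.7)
The plan is to reduce the statement to the previously established theorem that for any minimal irreducible sorting network $\cN$, the simplicial complex $\Delta(\cN)$ is the boundary complex of the polar of $\Omega(\cN)$. Concretely, I will verify that $\cN_x^1$ satisfies the hypotheses of that theorem, and then translate its conclusion through the duality of Proposition~\ref{prop:duality} and Lemma~\ref{lem:tree}.

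First, I would verify that $\cN_x^1$ is a minimal irreducible sorting network in the sense of Definition~\ref{def:minimal}. Proposition~\ref{prop:duality} already ensures that $\cN_x^1$ is sorting, since any triangulation of $\cP_x$ yields a pseudoline arrangement supported by $\cN_x^1$. By Lemma~\ref{lem:tree}, the contact graph of any such arrangement is (an orientation of) the dual graph of a triangulation of the $n$-gon $\cP_x$, which is a tree on $n-2$ vertices; in particular it is connected and acyclic, so $\cN_x^1$ is irreducible and condition~(ii) of Definition~\ref{def:minimal} holds. As a sanity check matching condition~(i), the $1$-kernel $\cN_x^1$ has $n-2$ levels and $\binom{n}{2}-n = \binom{n-2}{2} + (n-2) - 1$ commutators, exactly the minimal count.

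Next, applying the theorem of Section~\ref{subsec:fd} to $\cN_x^1$ immediately gives that $\Delta(\cN_x^1)$ is the boundary complex of the polar of the brick polytope $\Omega(\cN_x^1)$, and in particular $\Omega(\cN_x^1)$ has dimension $n-3$ (using Corollary~\ref{coro:dim}, since $\cN_x^1$ is irreducible with $n-2$ levels) and its graph is $G(\cN_x^1)$.

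It remains to identify $\Delta(\cN_x^1)$ with the simplicial complex of crossing-free sets of internal diagonals of $\cP_x$. Recall that the vertices of $\Delta(\cN_x^1)$ are the commutators of $\cN_x^1$, which by construction are labeled precisely by the internal diagonals of $\cP_x$, and a face of $\Delta(\cN_x^1)$ is a set $\gamma$ of such commutators contained in the contact set of some pseudoline arrangement supported by $\cN_x^1$. By Proposition~\ref{prop:duality}, such arrangements biject with triangulations of $\cP_x$ in a way that matches contacts with internal diagonals of the triangulation. Therefore the faces of $\Delta(\cN_x^1)$ are exactly the subsets of internal diagonals of $\cP_x$ extendable to a triangulation, i.e.\ the crossing-free subsets of internal diagonals, which is the standard definition of the boundary complex of the simplicial associahedron. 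Combining these bijections with the previous step yields the realization statement.

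The only part that requires any care is the verification of minimality and irreducibility of $\cN_x^1$; the rest is a direct transport of structure through the duality and the previously proved theorem. I expect no genuine obstacle: the dual tree interpretation of the contact graph handles both the irreducibility (connectivity of the tree) and the minimality (correct edge count), so the argument should be essentially a matter of assembling the cited results in the right order.
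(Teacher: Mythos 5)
Your argument matches the paper's own proof almost exactly: both verify via Proposition~\ref{prop:duality} and Lemma~\ref{lem:tree} that $\cN_x^1$ is a minimal irreducible sorting network (the paper via condition~(ii) of Definition~\ref{def:minimal}, you additionally via condition~(i), which is a harmless sanity check), then invoke the theorem of Section~\ref{subsec:fd} and identify $\Delta(\cN_x^1)$ with the crossing-free complex through the duality. This is correct and is the same route the paper takes.
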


It turns out that these polytopes coincide up to translation with the associahedra of Hohlweg \& Lange~\cite{HohlwegLange}. For completeness, let us  recall their construction (we adapt notations to fit our presentation). Consider the polygon~$\cP_x$ associated to a word~$x\in\{a,b\}^{n-2}$, and let~$T$ be a triangulation of~$\cP_x$. For~$j \in [n-2]$, there is a unique triangle~$\Delta_j(T)$ of~$T$ with vertices~$u < j+1 < v$. Let~$\pi_j(T)$ denote the product of the number of edges of~$\cP_x$ between $u$ and~$j+1$ by the number of edges of~$\cP_x$ between~$j+1$ and~$v$. Associate to the triangulation~$T$ the vector~$\omega(T)$ whose $j$\textsuperscript{th} coordinate is~$\pi_j(T)$ if $x_{j+1} = b$ and~$n+1-\pi_j(T)$ if $x_{j+1} = a$. The associahedra of Hohlweg \& Lange~\cite{HohlwegLange} is the convex hull of the vectors~$\omega(T)$ associated to the $\frac{1}{n-1}{2n-4 \choose n-2}$ triangulations of~$\cP_x$. It is straightforward to check that our duality from~$T$ to~$T^*$ maps~$\Delta_j(T)$ to the $j$\textsuperscript{th} pseudoline of~$T^*$, and the vector~$\omega(T)$ to our brick vector~$\omega(T^*)$, up to a constant translation.

\begin{figure}[b]
  \capstart
  \centerline{\includegraphics[width=1\textwidth]{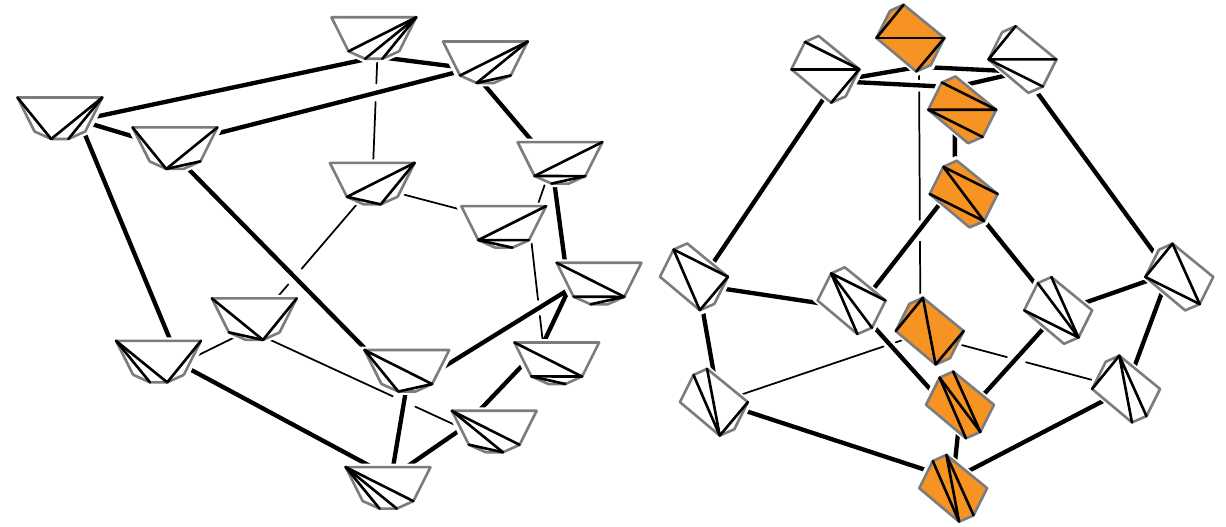}}
  \caption{The brick polytopes~$\Omega(\cN_{b^4}^1)$ (left) and $\Omega(\cN_{a^2b^2}^1)$ (right) provide two different realizations of the $3$-dimensional associahedron. The convex hull of the brick vectors of the centrally symmetric triangulations of~$\cP_{a^2b^2}$ (colored in the picture) is a realization of the $2$-dimensional cyclohedron.}
  \label{fig:associahedra2B}
\end{figure}

Observe that the associahedron~$\Omega(\cN_x^1)$ does not depend on the first and last letters of~$x$ since we erase the first and last levels of~$\cN_x$. Furthermore, a network~$\cN_x$ and its reflection~$v(\cN_x)$ (resp.~$h(\cN_x)$) through the vertical (resp.~horizontal) axis give rise to affinely equivalent associahedra. Affine equivalence between these associahedra is studied in~\cite{bhlt}. Two non-affinely equivalent $3$-dimensional associahedra are presented in \fref{fig:associahedra2B}.

\begin{example}\label{exm:loday}
We obtain Loday's realization of the $(n-3)$-dimensional associahedron~\cite{Loday} as a translate of the brick polytope of the $1$-kernel of the ``bubble sort'' network~$\cB_n \eqdef \cN_{b^{n-2}}$ associated to the word $b^{n-2}$~---~see \fref{fig:associahedra2B} (left).
\end{example}

We now describe normal vectors for the facets of these associahedra. For any word ${x\in\{a,b\}^{n-2}}$, the facets of the brick polytope~$\Omega(\cN_x^1)$ are in bijection with the commutators of~$\cN_x^1$. The vertices of the facet corresponding to a commutator~$\gamma$ are the brick vectors of the pseudoline arrangements supported by~$\cN_x^1$ and with a contact at~$\gamma$. We have already seen that a normal vector of this facet is given by the characteristic vector of the cut induced by~$\gamma$ in the contact graphs of the pseudoline arrangements supported by~$\cN_x^1$ and with a contact at~$\gamma$. In the following lemma, we give an additional description of this characteristic vector:

\begin{lemma}\label{lem:facetsAssociahedra}
Let $\Lambda$ be a pseudoline arrangement supported by~$\cN_x^1$ and let $\gamma$ be a contact of~$\Lambda$. The arc corresponding to~$\gamma$ is a cut of the contact graph~$\Lambda^\contact$ which separates the pseudolines of~$\Lambda$ passing above~$\gamma$ from those passing below~$\gamma$.
\end{lemma}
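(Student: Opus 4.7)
Plan: By Lemma~\ref{lem:tree}, the contact graph $\Lambda^\contact$ is a tree (the dual tree of the triangulation $T$ corresponding to $\Lambda$ via Proposition~\ref{prop:duality}, equipped with some orientations). In particular, every arc is a cut, so the first half of the statement is immediate: removing the arc of $\gamma$ disconnects $\Lambda^\contact$ into exactly two subtrees $C_i \ni i$ and $C_j \ni j$, where $i$ and $j$ denote the two pseudolines at the contact $\gamma$, with $i$ passing above and $j$ below.

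The next step is to identify these two subtrees geometrically. The contact $\gamma$ is labeled by an internal diagonal $d$ of $T$, and $d$ partitions the triangles of $T$ into those lying on one side of $d$ versus the other. I claim that $C_i$ and $C_j$ correspond, under the duality, to these two sets of triangles. Indeed, two triangles of $T$ sharing an internal edge $e \neq d$ must lie on a common side of $d$, since $e$ cannot cross $d$ in a triangulation (and if $e$ has an endpoint on $d$, it still pushes into only one side). Propagating this observation along the dual tree shows that each of $C_i$ and $C_j$ is precisely the subtree dual to one of the two sub-triangulations obtained by cutting $T$ along $d$.

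Finally, I must match the two sides of $d$ in $\cP_x$ with the two sides of $\gamma$ in $\cN_x^1$. The key is the parallel construction of $\cP_x$ and $\cN_x$ from the common word $x$: the vertex $p_{r+1}$ lies above the chord $[p_1,p_n]$ iff the $(r+1)$-th pseudoline of $\cN_x$ touches the top level (both being equivalent to $x_r = a$). Tracing this parallel through the duality of Proposition~\ref{prop:duality} shows that the dual pseudoline $\Delta^*$ of a triangle $\Delta$ lies above the commutator $\gamma$ at horizontal position $x_\gamma$ iff $\Delta$ lies on the side of $d$ adjacent to the triangle $\Delta_i$ (the one whose dual is $i$). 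Since $i$ itself lies above $\gamma$ and $\Delta_i$ lies on that chosen side of $d$, the subtree $C_i$ then coincides with the pseudolines above $\gamma$, and $C_j$ with those below.

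The main obstacle is the third step: rigorously relating ``lies above $\gamma$ at position $x_\gamma$ in the network'' with ``lies on a given side of $d$ in the polygon''. Although intuitively clear from the symmetric construction, it requires a careful verification. The natural route is a sweep argument through $\cN_x^1$ from left to right, verifying that the relative order of the dual pseudolines at position $x_\gamma$ matches the stacking order of the corresponding triangles of $T$ across the diagonal $d$. Equivalently, one can proceed by induction on the number of triangles in each sub-triangulation, using the fact that the duality on a smaller sub-polygon respects the ``above/below'' labeling inherited from the parallel construction.
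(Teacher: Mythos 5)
Your proposal follows essentially the same route as the paper, and the geometric content is identical: both arguments hinge on the single fact that, for a contact $\gamma$ dual to a diagonal $d$, the pseudolines above $\gamma$ are exactly the duals of the triangles on one fixed side of $d$ and the pseudolines below $\gamma$ are the duals of the triangles on the other side. The paper simply asserts this as an immediate consequence of the duality (``$\Delta$ and $\Delta'$ are located on opposite sides of the edge $\gamma$ of $T$'') and then concludes in one line: triangles on opposite sides of $d$ cannot share an edge other than $d$, so $\gamma$ is the unique contact between the ``above'' pseudolines and the ``below'' pseudolines, which is precisely what it means for the corresponding arc to be the cut in question. Your version routes through Lemma~\ref{lem:tree} first (contact graph is a tree, hence every arc is a cut, then identify the two subtrees) and separately argues that triangles sharing an edge $\ne d$ lie on the same side of $d$; this is correct but is a small detour -- the paper's contrapositive (opposite sides $\Rightarrow$ no shared edge other than $d$) gives the minimal-cut conclusion directly without invoking the tree structure. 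You are right that the above/below-vs.-sides-of-$d$ correspondence deserves justification; note, however, that the paper's own proof treats that step as self-evident from the duality of Proposition~\ref{prop:duality}, so you are not missing an idea the paper supplies -- you are merely being more candid about a step the paper glosses over. If you want to close the gap cleanly, the shortest path is probably not a full left-to-right sweep, but the observation that two pseudolines on the same side of $\gamma$ at $x_\gamma$ whose triangles were on opposite sides of $d$ would have to cross $\gamma$'s two pseudolines an incorrect number of times, contradicting the fact that each pair of pseudolines in $\Lambda$ crosses exactly once.
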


\begin{proof}
Let $T$ be the triangulation of~$\cP_x$ such that $T^*=\Lambda$. Let $\Delta$ and $\Delta'$ be two triangles of~$T$ whose dual pseudolines~$\Delta^*$ and~$\Delta'^*$ pass respectively above and below~$\gamma$. Then $\Delta$ and $\Delta'$ are located on opposite sides of the edge~$\gamma$ of~$T$, and thus, they cannot share an edge, except if it is~$\gamma$ itself. Consequently, their dual pseudolines~$\Delta^*$ and~$\Delta'^*$ cannot have a contact, except if it is~$\gamma$ itself. We obtain that~$\gamma$ is the only contact between the pseudolines of~$\Lambda$ passing above~$\gamma$ and those passing below~$\gamma$. This implies the lemma.
\end{proof}

\begin{remark}
The pseudolines passing below~$\gamma$ in any pseudoline arrangement supported by~$\cN_x^1$ with a contact at~$\gamma$ are also the pseudolines of the greedy pseudoline arrangement~$\Gamma(\cN_x^1)$ which pass below the cell immediately to the left of~$\gamma$. This provides a fast method to obtain a normal vector for each facet of $\Omega(\cN_x^1)$~---~see \fref{fig:normalvectors} for illustration.

\begin{figure}[h]
  \capstart
  \centerline{\includegraphics[scale=1]{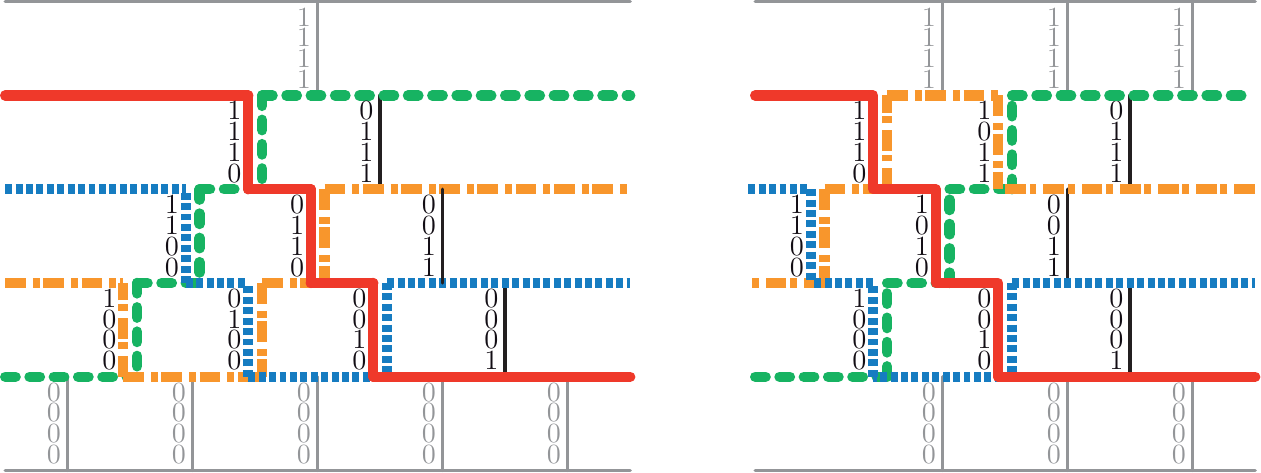}}
  \caption{Normal vectors for the facets of the associahedra~$\Omega(\cN_{b^4}^1)$ and~$\Omega(\cN_{a^2b^2}^1)$, read on the greedy pseudoline arrangements~$\Gamma(\cN_{b^4}^1)$ and~$\Gamma(\cN_{a^2b^2}^1)$.}
  \label{fig:normalvectors}
\end{figure}
\end{remark}

\begin{corollary}
For any~$x\in\{a,b\}^{n-2}$, the brick polytope~$\Omega(\cN_x^1)$ has $n-3$ pairs of parallel facets. The diagonals of~$\cP_x$ corresponding to two parallel facets of~$\Omega(\cN_x^1)$ are crossing.
\end{corollary}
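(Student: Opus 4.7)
The plan is to translate parallelism of facets into equality of unordered set-partitions of the label set $[n-2]$, and then enumerate. By Corollary~\ref{coro:fd}, the facet of $\Omega(\cN_x^1)$ corresponding to an internal diagonal $\delta$ of $\cP_x$ has normal vector $\one_{V_\delta}\in\R^{n-2}$, where Lemma~\ref{lem:facetsAssociahedra} identifies $V_\delta$ (up to complementation) as the set of pseudolines passing above the contact~$\gamma_\delta$ in any pseudoline arrangement supported by~$\cN_x^1$ with $\gamma_\delta$ as a contact. Since $\Omega(\cN_x^1)$ lies in the hyperplane $\sum x_i = D(\cN_x^1)$, two facets are parallel iff their normals coincide modulo $\R\cdot\one$. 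A direct entrywise analysis of $\{0,1\}$-vectors gives that $\one_V$ and $\one_{V'}$ differ by a multiple of $\one$ iff $V'=V$ or $V'=[n-2]\ssm V$. Thus pairs of distinct parallel facets correspond bijectively to pairs of diagonals that induce the same unordered partition $\{V_\delta,[n-2]\ssm V_\delta\}$ of $[n-2]$.

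Next I would compute $V_\delta$ explicitly via the duality recalled in Section~\ref{subsec:duality}: the $j$\textsuperscript{th} pseudoline of $T^*$ is dual to the unique triangle $\Delta_j(T)$ of $T$ whose middle vertex in the linear order on $[n]$ is $j+1$. For $\delta=\{a,b\}$ with $a<b$ and any triangulation $T\ni\delta$, the triangles of $T$ split according to which side of $\delta$ they lie on. A middle-vertex bookkeeping, carefully handling the subcases where the middle vertex of a triangle coincides with $a$ or with $b$, shows that the triangles on the small-arc side (vertices in $\{a,\dots,b\}$) contribute the pseudoline labels $\{a,\dots,b-2\}$, while those on the large-arc side contribute $\{1,\dots,a-1\}\cup\{b-1,\dots,n-2\}$. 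In particular the resulting unordered partition depends only on $\delta$, not on the triangulation $T$ used to realise $\gamma_\delta$ as a contact.

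The enumeration is now elementary. Two diagonals $\{a,b\}$ and $\{a',b'\}$ (with $a<b$ and $a'<b'$) induce the same partition iff the interval $\{a,\dots,b-2\}$ equals $\{1,\dots,a'-1\}\cup\{b'-1,\dots,n-2\}$. Since an interval cannot be the union of two disjoint nonempty intervals (the adjacent case $b'=a'+1$ is ruled out because $\{a',b'\}$ must be a diagonal, not an edge), one of $\{1,\dots,a'-1\}$ or $\{b'-1,\dots,n-2\}$ must be empty, forcing $a'=1$ or $b'=n$. A short check pins the pair down to $\{\{1,k\},\{k-1,n\}\}$ for some $k\in\{3,\dots,n-1\}$, yielding exactly $n-3$ pairs. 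For the crossing assertion, observe that $\{1,k\}$ separates vertex $k-1$ from vertex $n$ on the boundary of $\cP_x$, so the two diagonals of each such pair cross. The main obstacle is the case analysis of step two: the label set on each side of $\delta$ must be computed without over- or under-counting triangles whose middle vertex is $a$ or $b$; beyond that the argument is formal.
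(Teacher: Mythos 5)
Your first step is sound: since $\Omega(\cN_x^1)$ lies in a hyperplane orthogonal to $\one$ and all facet normals are $\{0,1\}$-vectors $\one_{V_\delta}$, two facets are parallel iff their $V$-sets are equal or complementary; this is exactly what the paper uses implicitly. The genuine gap is in your second step, the explicit formula $V_\delta=\{a,\dots,b-2\}$ (up to complement) for the diagonal $\delta=\{a,b\}$. This formula is only correct when the cyclic order around $\cP_x$ agrees with the linear order $1,\dots,n$, i.e.\ for $x=b^{n-2}$ (Loday's polygon) and its reflections. For general $x$ the two arcs of the boundary of $\cP_x$ cut off by $\delta$ are determined by the cyclic order of the vertices, which is \emph{not} $1,\dots,n$: it is $1$, then the ``top'' vertices ($i+1$ with $x_i=a$) in increasing order, then $n$, then the ``bottom'' vertices in decreasing order. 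Concretely, take $n=5$, $x=aba$ and $\delta=\{2,5\}$. The cyclic order of $\cP_{aba}$ is $1,2,4,5,3$, so the two sides of $\delta$ have vertex sets $\{2,4,5\}$ and $\{1,2,3,5\}$. The single triangle on the first side has middle vertex $4$ (pseudoline label $3$), while the two triangles on the second side have middle vertices $2$ and $3$ (pseudoline labels $1$ and $2$). The resulting partition of $[3]$ is $\{1,2\}/\{3\}$, not $\{2,3\}/\{1\}$ as your formula predicts. Since the whole enumeration in your step three, as well as the identification of the parallel pairs as $\{\{1,k\},\{k-1,n\}\}$ and the crossing argument built on it, depend on this formula, the proof does not go through for general $x$.

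The paper sidesteps this $x$-dependence entirely: rather than trying to compute $V_\delta$ for every diagonal, it picks out, for each $i\in[n-3]$, the \emph{leftmost} commutator between levels $i$ and $i+1$ of $\cN_x^1$ and the \emph{rightmost} commutator between levels $n-i-2$ and $n-i-1$, and observes (using Lemma~\ref{lem:facetsAssociahedra} and a left-to-right sweep argument) that their normal vectors are always $f_i=\sum_{j\le i}e_j$ and $\one-f_i$, \emph{independently of $x$}. This gives the $n-3$ parallel pairs directly, and the crossing claim then follows from the general fact that two parallel facets share no vertex, so the two diagonals belong to no common triangulation and hence cross. If you want to keep your approach, you would have to compute the correct $V_\delta$ in terms of the cyclic arcs of $\cP_x$, which reintroduces a case analysis on $x$ that the paper's direct identification avoids.
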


\begin{figure}
  \capstart
  \centerline{\includegraphics[width=1\textwidth]{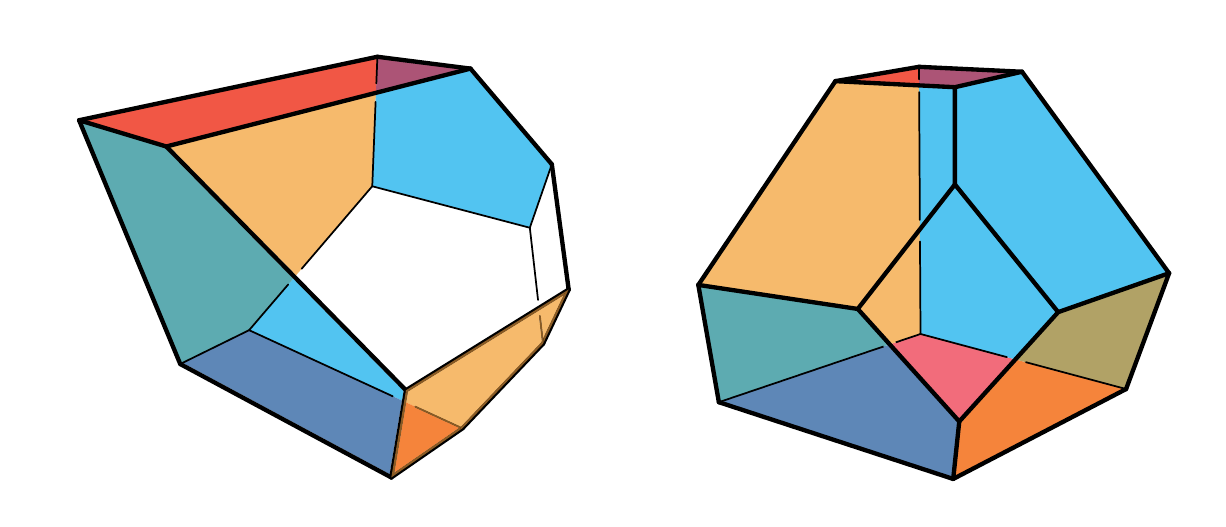}}
  \caption{Pairs of parallel facets in the associahedra of \fref{fig:associahedra2B}. Since they are arranged differently (for example, there is a vertex adjacent to none of these facets in the rightmost realization), these two associahedra are not affinely equivalent.}
  \label{fig:associahedra2C}
\end{figure}

\begin{proof}
For $i\in[n-3]$, the normal vector of the facet corresponding to the leftmost contact between the $i$\textsuperscript{th} and $(i+1)$\textsuperscript{th} level of~$\cN_x^1$ is always the vector $f_i \eqdef \sum_{j=1}^i e_j$ while the normal vector of the facet corresponding to the rightmost contact between the $(n-i-2)$\textsuperscript{th} and $(n-i-1)$\textsuperscript{th} level of~$\cN_x^1$ is always the vector $\sum_{j=i+1}^{n-2} e_j = \one - f_i$ ---~see \fref{fig:normalvectors}. Since~$\Omega(\cN_x^1)$ is orthogonal to~$\one$, these two facets are thus parallel. We obtain $n-3$ pairs of parallel facets when $i$ varies from $1$ to $n-3$. Finally, since two parallel facets of~$\Omega(\cN_x^1)$ have no vertex in common, the corresponding diagonals of~$\cP_x$ are necessarily crossing each other.
\end{proof}

\begin{remark}\label{rem:minkowskiSumAssociahedra}
As discussed in Sections~\ref{subsec:minkowskiSum} and~\ref{subsec:generalizedPermutahedra}, the associahedron~$\Omega(\cN_x^1)$ has two different Minkowski decompositions: as a positive Minkowski sum of the polytopes~$\Omega(\cN_x^1,b)$ associated to each brick~$b$ of~$\cN_x^1$, or as a Minkowski sum and difference of faces of the standard simplex~$\simplex_{[n-2]}$.

In Loday's associahedron (\ie when~$x=b^{n-2}$ and~$\cN_x = \cB_n$), these two decompositions coincide. Indeed, for any $i,j\in[n]$ with $j \ge i+3$, denote by $b(i,j)$ the brick of~$\cB_n$ located immediately below the contact between the $i$\textsuperscript{th} and the $j$\textsuperscript{th} pseudoline of the unique pseudoline arrangement supported by~$\cB_n$. Then the Minkowski summand~$\Omega(\cB_n,b(i,j))$ is the face $\simplex_{\{i,\dots,j-2\}}$ of the standard simplex (up to a translation of vector~$\one_{\{1,\dots,i-1\}\cup\{j-1,\dots,n-2\}}$). This implies that
$$\Omega(\cB_n^1) = \sum_{1\le i<j\le n-2} \simplex_{\{i,\dots,j\}}$$
up to translation, and by unicity, that the coefficient $y_I$ is $1$ if $I$ is an interval of $[n-2]$ and $0$ otherwise (singletons are irrelevant since they only involve translations).

For general~$x$, the Minkowski summands~$\Omega(\cN_x^1,b)$ are not always simplices. In~\cite{Lange}, Lange computes the coefficients $\{y_I\}$ in the Minkowski decomposition of any associahedra~$\Omega(\cN_x^1)$ into dilates of faces of the standard simplex, and therefore answers Question~\ref{qu:coeffsMinkowskiSumDiff} for those special networks.
\end{remark}

\begin{remark}\label{rem:cyclohedra}
To close this section, we want to mention that we can similarly present Hohlweg \& Lange's realizations of the cyclohedra~\cite{HohlwegLange}. Namely, consider an antisymmetric word $x\in\{a,b\}^{2n-2}$ (\ie which satisfies $\{x_i,x_{2n-1-i}\}=\{a,b\}$ for all $i$), such that the $(2n)$-gon~$\cP_x$ is centrally symmetric. Then the convex hull of the brick vectors of the dual pseudoline arrangements of all centrally symmetric triangulations of~$\cP_x$ is a realization of the $(n-1)$-dimensional cyclohedron. For example, the centrally symmetric triangulations of~$\cP_{a^2b^2}$ are colored in the right associahedron of \fref{fig:associahedra2B}: the convex hull of the corresponding vertices is a realization of the $2$-dimensional cyclohedron.
\end{remark}

%%%%%%%%%%%%%%%%%%%

\section{Brick polytopes and multitriangulations}\label{sec:multi}

This last section is devoted to the initial motivation of this work. We start recalling some background on multitriangulations, in particular the duality between $k$-triangulations of a convex polygon and pseudoline arrangements supported by the $k$-kernel of a reduced alternating sorting network. We then describe normal vectors for the facets of the brick polytope of the $k$-kernel of the ``bubble sort network''. Finally, we discuss the relationship between our brick polytope construction and the realization of a hypothetical ``multiassociahedron''.

\subsection{Background on multitriangulations}\label{subsec:backgroundMulti}

We refer the reader to~\cite{PilaudSantos} and \mbox{\cite[Chapters 1--4]{Pilaud}} for detailled surveys on multitriangulations and only recall here the properties needed for this paper. We start with the definition:

\begin{definition}
A \defn{$k$-triangulation} of a convex polygon is a maximal set of its diagonals such that no $k+1$ of them are mutually crossing.
\end{definition}

Multitriangulations were introduced by Capoyleas \& Pach \cite{CapoyleasPach} in the context of extremal theory for geometric graphs: a $k$-triangulation of a convex polygon~$\cP$ induces a maximal $(k+1)$-clique-free subgraph of the intersection graph of the diagonals of~$\cP$. Fundamental combinatorial properties of triangulations (which arise when~$k=1$) extend to multitriangulations~\cite{Nakamigawa,dkm,Jonsson,PilaudSantos}. We restrict the following list to the properties we need later on:

\paragraph{Diagonals~\cite{Nakamigawa, dkm, PilaudSantos}}
Any $k$-triangulation of a convex $n$-gon~$\cP$ has precisely ${k(2n-2k-1)}$ diagonals. A diagonal of~$\cP$ is said to be \defn{\mbox{$k$-relevant}} (resp. \defn{$k$-boundary}, resp.~\defn{$k$-irrelevant}) if it has at least $k$ vertices of~$\cP$ on each side (resp.~precisely $k-1$ vertices of~$\cP$ on one side, resp.~less than $k-1$ vertices of~$\cP$ on one side). All $k$-boundary and $k$-irrelevant diagonals are contained in all $k$-triangulations of~$\cP$.

\begin{figure}[h]
  \capstart
  \centerline{\includegraphics[scale=1]{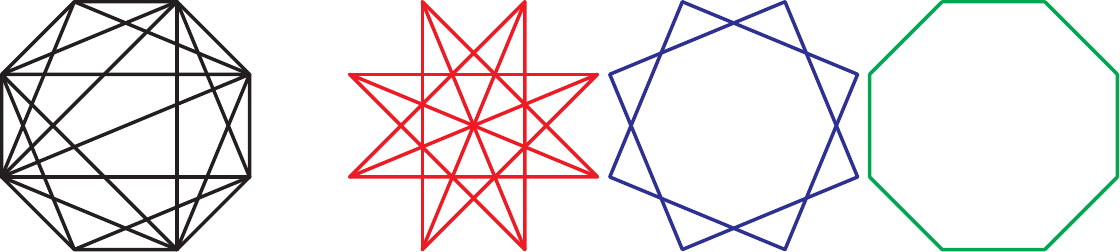}}
  \caption{A $2$-triangulation of the octagon (left) and the \mbox{$2$-rele}\-vant, $2$-boundary and $2$-irrelevant edges of the octogon (right).}
  \label{fig:2triang}
\end{figure}

\paragraph{Stars~\cite{PilaudSantos}}
A \defn{$k$-star} of~$\cP$ is a star-polygon with $2k+1$ vertices $s_0,\dots,s_{2k}$ in convex position joined by the $2k+1$ edges $[s_i,s_{i+k}]$ (the indices have to be understood modulo $2k+1$). Stars in multitriangulations play the same role as triangles in triangulations. In particular, a $k$-triangulation of~$T$ is made up gluing $n-2k$ distinct $k$-stars: a $k$-relevant diagonal of~$T$ is contained in two $k$-stars of~$T$ (one on each side), while a $k$-boundary diagonal is contained in one $k$-star of~$T$~---~see \fref{fig:stars} for an illustration. Furthermore, any pair of $k$-stars of~$T$ has a unique \defn{common bisector}. This common bisector is not a diagonal of~$T$ and any diagonal of~$\cP$ not in~$T$ is the common bisector of a unique pair of $k$-stars.

\begin{figure}[h]
  \capstart
  \centerline{\includegraphics[scale=1]{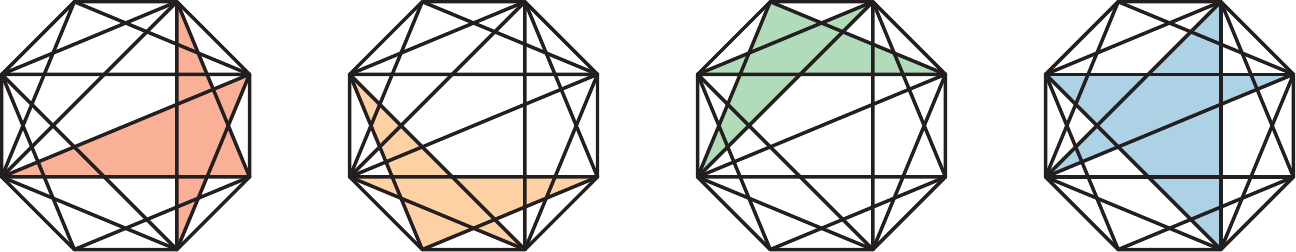}}
  \caption{The four $2$-stars in the $2$-triangulation of \fref{fig:2triang}.}
  \label{fig:stars}
\end{figure}

\paragraph{Flip~\cite{Nakamigawa,Jonsson,PilaudSantos}}
Let $T$ be a $k$-triangulation of~$\cP$, let $e$ be a \mbox{$k$-rele}\-vant edge of~$T$, and let $f$ denote the common bisector of the two $k$-stars of~$T$ containing~$e$. Then $T\symdif\{e,f\}$ is again a $k$-triangulation of~$\cP$, and $T$ and $T\symdif\{e,f\}$ are the only $k$-triangulations of $\cP$ containing $T\ssm\{e\}$. We say that we obtain~$T\symdif\{e,f\}$ by the \defn{flip} of~$e$ in~$T$. The graph of flips is $k(n-2k-1)$-regular and connected.

\begin{figure}[h]
  \capstart
  \centerline{\includegraphics[scale=1]{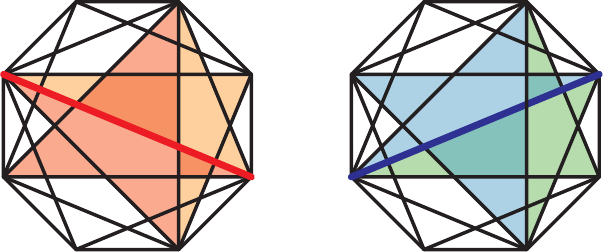}}
  \caption{A flip in the $2$-triangulation of \fref{fig:2triang}.}
  \label{fig:multiFlip}
\end{figure}

\paragraph{Duality~\cite{PilaudPocchiola}}
Fix a word~$x\in\{a,b\}^{n-2}$ and consider the $n$-gon~$\cP_x$ and the sorting network~$\cN_x$ defined in Section~\ref{subsec:duality}. We denote by~$\cN_x^k$ the \defn{$k$-kernel} of $\cN_x$, \ie the network obtained from~$\cN_x$ by erasing its first~$k$ and last~$k$ levels together with the commutators incident to them. The remaining commutators of~$\cN_x^k$ are precisely labeled by the $k$-relevant diagonals of~$\cP_x$, which provides a duality between $k$-triangulations of~$\cP_x$ and pseudoline arrangements supported by~$\cN_x^k$. Namely, the set of commutators of~$\cN_x$ labeled by the internal diagonals of a $k$-triangulation~$T$ of $\cP_x$ is the set of contacts of a pseudoline arrangement~$T^*$ supported by $\cN_x^k$. Reciprocally the $k$-relevant diagonals of $\cP_x$ which label the contacts of a pseudoline arrangement supported by~$\cN_x^k$, together with all $k$-irrelevant and $k$-boundary diagonals of~$\cP_x$, form a $k$-triangulation of $\cP_x$.

\begin{figure}[h]
  \capstart
  \centerline{\includegraphics[width=\textwidth]{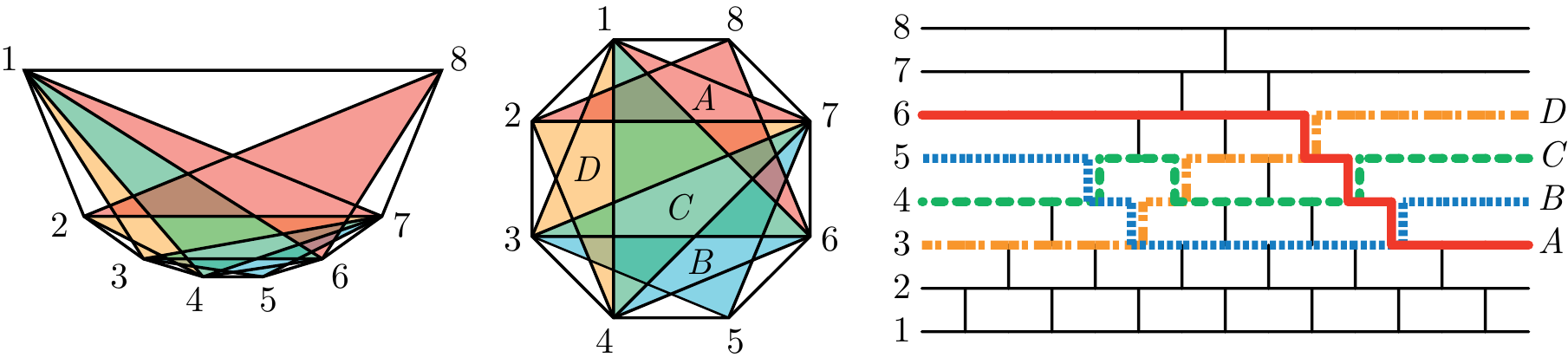}}
  \caption{A $2$-triangulation $T$ of $\cP_{b^6}$ (left), a symmetric representation of it (center), and its dual pseudoline arrangement $T^*$ on the $2$-kernel of the sorting network~$\cN_{b^6}$ (right).}
  \label{fig:dualite2}
\end{figure}

The dual pseudoline arrangement $T^*$ of a $k$-triangulation $T$ of $\cP_x$ has one pseudoline $S^*$ dual to each $k$-star $S$ of $T$. A commutator is the crossing (resp.~a contact) between two pseudolines $S^*$ and $R^*$ of $T^*$ if it is labeled by the common bisector (resp.~by a common edge) of the $k$-stars $S$ and~$R$. Consequently, this duality defines an isomorphism between the graph of flips on pseudoline arrangements supported by~$\cN_x^k$ and the graph of flips on $k$-triangulations of~$\cP_x$.

\subsection{The brick polytope of multitriangulations}

We consider the brick polytope~$\Omega(\cN_x^k)$ of the $k$-kernel of a reduced alternating sorting network~$\cN_x$ (for some word~$x\in\{a,b\}^{n-2}$). Since we erase the first and last $k$ levels, this polytope again does not depend on the first $k$ and last $k$ letters of~$x$. The vertices of this polytope correspond to $k$-triangulations of~$\cP_x$ whose contact graph is acyclic. In contrast to the case of triangulations ($k=1$) discussed in Section~\ref{sec:associahedra},
\begin{enumerate}[(i)]
\item not all $k$-triangulations of~$\cP_x$ appear as vertices of~$\Omega(\cN_x^k)$ and the graph of~$\Omega(\cN_x^k)$ is a proper subgraph of the graph of flips on $k$-triangulations of~$\cP_x$;
\item the combinatorial structure of~$\Omega(\cN_x^k)$ depends upon~$x$.
\end{enumerate}

In the rest of this section, we restrict our attention to the ``bubble sort network''. We denote by~$\cB_n \eqdef \cN_{b^{n-2}}$ and $\cP_n \eqdef \cP_{b^{n-2}}$. For this particular network, we can describe further the combinatorics of the brick polytope~$\Omega(\cB_n^k)$.

\begin{example}
The $f$-vectors of the brick polytopes~$\Omega(\cB_7^2)$, $\Omega(\cB_8^2)$, $\Omega(\cB_9^2)$ and $\Omega(\cB_{10}^2)$ are $(6,6)$, $(22,33,13)$, $(92,185,118,25)$ and $(420,1062,945,346,45)$ respectively. We have represented~$\Omega(\cB_8^2)$ and~$\Omega(\cB_9^2)$ in Figures~\ref{fig:B82} and~\ref{fig:B92}. The polytopes~$\Omega(\cB_7^2)$ and~$\Omega(\cB_8^2)$ are simple while the polytope~$\Omega(\cB_9^2)$ has two non-simple vertices (which are contained in the projection facet of the Schlegel diagram on the right of \fref{fig:B92}) and the polytope~$\Omega(\cB_{10}^2)$ has~$24$ non-simple vertices.

\begin{figure}[p]
	\capstart
	\centerline{\includegraphics[width=1.1\textwidth]{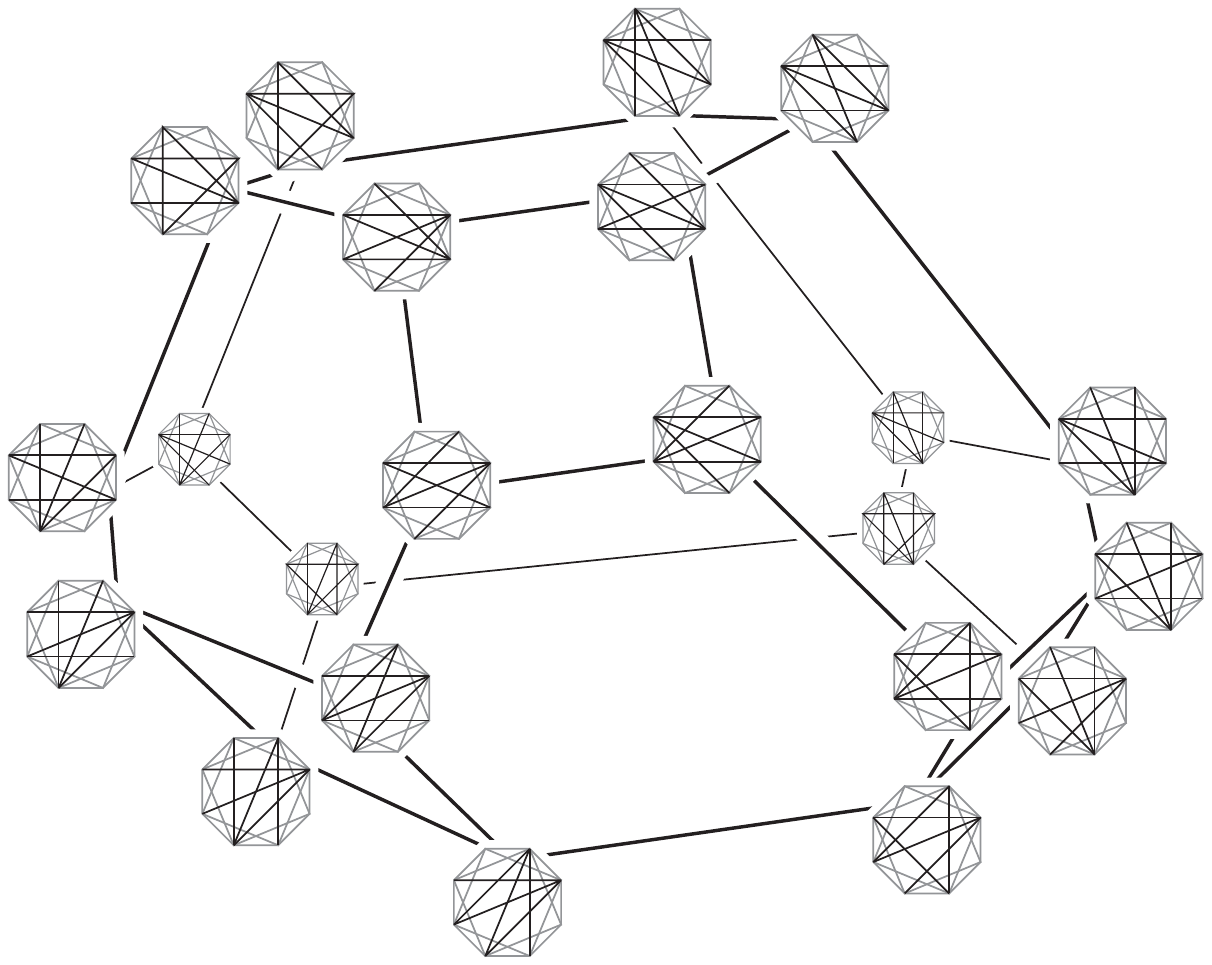}}
	\caption{The $3$-dimensional polytope $\Omega(\cB_8^2)$. Only $22$ of the $84$ $2$-triangulations of the octagon appear as vertices.}
	\label{fig:B82}
\end{figure}

\begin{figure}[p]
	\capstart
	\centerline{\includegraphics[width=\textwidth]{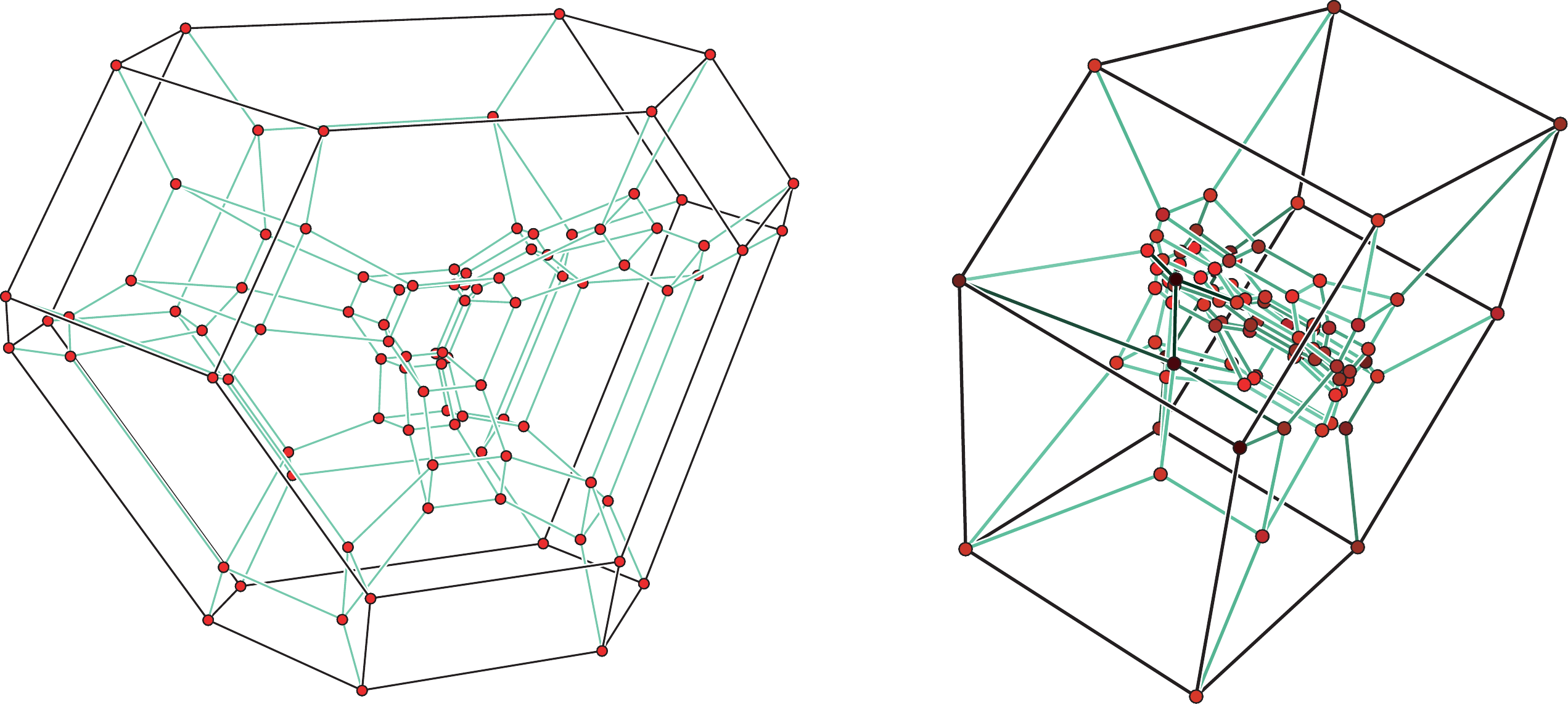}}
	\caption{Two Schlegel diagrams of the $4$-dimensional polytope $\Omega(\cB_9^2)$. On the second one, the two leftmost vertices of the projection facet are non-simple vertices.}
	\label{fig:B92}
\end{figure}
\end{example}

As mentioned in Example~\ref{exm:loday}, the brick polytope~$\Omega(\cB_n^1)$ coincides (up to translation) with Loday's realization of the $(n-3)$-dimensional associahedron~\cite{Loday}~---~see \fref{fig:associahedra2B} (left). The facet normal vectors of this polytope are all vectors of $\{0,1\}^{n-2}$ which are neither $0^{n-2}$ nor $1^{n-2}$ and whose $1$'s are consecutive. More precisely, the vector~$\sum_{\ell=i}^{j-2} e_\ell$ is a normal vector of the facet corresponding to the diagonal $[i,j]$, for any $i,j\in[n]$ with $j \ge i+2$.

For general~$k$, we can similarly provide a more explicit description of the facets of the brick polytope $\Omega(\cB_n^k)$. Representatives for their normal vectors are given by the following sequences:

\begin{definition}
A sequence of~$\{0,1\}^q$ is \defn{$p$-valid} if it is neither $0^q$ nor $1^q$ and if it does not contain a subsequence $10^r1$ for $r\ge p$. In other words, all subsequences of $p$ consecutive zeros appear before the first $1$ or after the last $1$.
\end{definition}

\begin{remark}
Let $\VS_{p,q}$ denote the number of $p$-valid sequences of~$\{0,1\}^q$. It is easy to see that $\VS_{1,q}=\frac{1}{2}(q-1)(q+2)$ (the number of internal diagonals of a $(q+2)$-gon!) and that $\VS_{2,q}=F_{q+4}-(q+4)$, where $F_n$ denotes the $n$\textsuperscript{th} Fibonacci number. To compute $\VS_{p,q}$ in general, consider the non-ambiguous rational expression $0^*(1,10,100,\dots,10^{p-1})^*10^*$. The corresponding rational language consists of all $p$-valid sequences plus all non-empty sequences of~$1$. Thus, the generating function of $p$-valid sequences is:
$$\sum_{q\in\N} \VS_{p,q}x^q=\frac{1}{1-x}\,\frac{1}{1-\sum_{i=1}^{p} x^i}\,x\,\frac{1}{1-x}-\frac{x}{1-x}=\frac{x^2(2-x^p)}{(1-2x+x^{p+1})(1-x)}.$$
\end{remark}

Let $\sigma$ be a sequence of $\{0,1\}^{n-2k}$. Let $|\sigma|_0$ denote the number of zeros in $\sigma$. For all $i\le |\sigma|_0+2k$, we denote by $\zeta_i(\sigma)$ the position of the $i$\textsuperscript{th} zero in the sequence $0^k\sigma0^k$, obtained from $\sigma$ by appending a prefix and a suffix of $k$ consecutive zeros. We associate to $\sigma$ the set of edges of~$\cP_n$:
$$D(\sigma) \eqdef \set{[\zeta_i(\sigma),\zeta_{i+k}(\sigma)]}{i\in[|\sigma|_0+k]}.$$
Observe that $D(\sigma)$ can contain some $k$-boundary edges of~$\cP_n$.

\begin{proposition}\label{prop:multi}
Each $k$-valid sequence $\sigma\in\{0,1\}^{n-2k}$ is a normal vector of a facet $F_\sigma$ of the brick polytope $\Omega(\cB_n^k)$. The facet~$F_\sigma$ contains precisely the brick vectors of the dual pseudoline arrangements of the $k$-triangulations of~$\cP_n$ containing $D(\sigma)$. Furthermore, every facet of $\Omega(\cB_n^k)$ is of the form $F_\sigma$ for some $k$-valid sequence $\sigma\in\{0,1\}^{n-2k}$.
\end{proposition}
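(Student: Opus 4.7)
My plan is to reduce the statement to Corollary~\ref{coro:fd}, which says that every facet normal vector of a brick polytope is (a positive multiple of) the characteristic vector $\one_V$ of the sink $V$ of a minimal directed cut in the contact graph $\Lambda^\contact$ of some pseudoline arrangement $\Lambda$ supported by the network. Applied here, and combined with the multitriangulation duality recalled in Section~\ref{subsec:backgroundMulti}, the facets of~$\Omega(\cB_n^k)$ are indexed by the sinks of minimal directed cuts in the contact graphs $T^{*\contact}$ of the dual arrangements of $k$-triangulations $T$ of~$\cP_n$. I will label the $n-2k$ $k$-stars of any such $T$ by $[n-2k]$ via the pseudoline labeling induced by $\cB_n^k$: the $j$\textsuperscript{th} $k$-star $S_j(T)$ is dual to the pseudoline of $T^*$ starting at level $k+j$ on the left of $\cB_n^k$ (equivalently, ending at level $n-k+1-j$ on the right). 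In this way, any subset $V\subseteq\{S_j(T)\}$ is encoded by a sequence $\sigma_V \in\{0,1\}^{n-2k}$ via $(\sigma_V)_j = 1 \Leftrightarrow S_j(T)\in V$.

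Next I will establish the geometric dictionary: for any $k$-triangulation $T$ and any sequence $\sigma$, the subset $V\eqdef\{S_j(T) : \sigma_j=1\}$ is the sink of a directed cut in $T^{*\contact}$ whose arcs are labeled precisely by $D(\sigma)$ if and only if $D(\sigma)\subseteq T$. The key observation is that in the bubble sort network, the pseudoline starting at level $k+j$ can only have contacts with those starting at levels $k+\ell$ for $\ell$ close to $j$; tracing how $0^k\sigma 0^k$ encodes the ``above vs.~below $V$'' alternation, the diagonals separating stars in $V$ from stars outside $V$ are precisely the edges $[\zeta_i(\sigma),\zeta_{i+k}(\sigma)]$ of~$D(\sigma)$. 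Granting this, $\sigma$ is a normal vector of a facet $F_\sigma$ of $\Omega(\cB_n^k)$ if and only if there exists a $k$-triangulation $T\supseteq D(\sigma)$ such that the cut induced by $V$ in $T^{*\contact}$ is directed and minimal, and in that case Corollary~\ref{coro:faces} immediately identifies $F_\sigma$ with the convex hull of the brick vectors of the dual arrangements of all $k$-triangulations $T$ containing~$D(\sigma)$ (since the $k$-boundary and $k$-irrelevant diagonals are automatically in every $k$-triangulation).

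It remains to show that the sequences arising this way are exactly the $k$-valid ones. For the ``only if'' direction, if $\sigma$ is a facet normal then $\sigma\ne 0^{n-2k}$ and $\sigma\ne 1^{n-2k}$ since $V$ is a proper non-empty sink. Moreover, if $\sigma$ contained a pattern $10^r1$ with $r\geq k$, a flip argument on the $r+1$ consecutive pseudolines between the two $1$'s would force a contact oriented from~$V$ to its complement, contradicting the cut being directed towards~$V$ (alternatively, this is the obstruction to~$D(\sigma)$ being extendable to a $k$-triangulation in which the cut is directed). For the ``if'' direction, given any $k$-valid~$\sigma$, I will verify that $D(\sigma)$ is $(k+1)$-crossing-free using the zero-spacing constraint, hence extends to a $k$-triangulation~$T_\sigma$, and construct within $T_\sigma$ an orientation of the contact graph $T_\sigma^{*\contact}$ making the cut induced by~$V$ directed and minimal.

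The main obstacle will be the bookkeeping of the geometric dictionary in the middle paragraph, namely proving that the arcs of the cut associated to $V$ correspond precisely to $D(\sigma)$. This needs a careful inspection of the bubble sort network: between consecutive zeros of $0^k\sigma 0^k$, the constraint that stars in $V$ and outside $V$ can only exchange pseudolines through the specific commutators of $\cB_n^k$ pins down $D(\sigma)$. Once this dictionary is in hand, both the forward and converse directions follow by direct translation through Corollaries~\ref{coro:fd} and~\ref{coro:faces}.
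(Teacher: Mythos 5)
Your overall skeleton matches the paper's: reduce to Corollary~\ref{coro:fd}, use the duality of Section~\ref{subsec:backgroundMulti} to index cuts by $k$-triangulations, identify the arcs of the cut with the $k$-relevant edges of $D(\sigma)$, and characterize the facets by $k$-validity. Your sign convention ($V$ = sink, $\sigma=\one_V$) is the one consistent with Corollary~\ref{coro:fd}.

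However, your explanation of \emph{why} $k$-validity is the right condition has a genuine gap: you assign it the wrong role. You claim that a pattern $10^r1$ with $r\ge k$ obstructs either (a)~the cut being directed, or (b)~$D(\sigma)$ being extendable to a $k$-triangulation via $(k+1)$-crossing-freeness. Neither is correct. First, $D(\sigma)=\set{[\zeta_i,\zeta_{i+k}]}{i}$ is $(k+1)$-crossing-free for \emph{every} $\sigma$, valid or not: two of its edges $[\zeta_i,\zeta_{i+k}]$ and $[\zeta_j,\zeta_{j+k}]$ with $i<j$ cross only when $j<i+k$, so any $k+1$ of them include a pair sharing the vertex $\zeta_{i+k}$. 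No zero-spacing constraint is needed. Second, for any $T\supseteq D(\sigma)$ the cut between $V$ and its complement is always directed towards $V$, whether or not $\sigma$ is $k$-valid; the orientation of each arc is determined by the geometry and always points into the ``below'' side, which is exactly $V$. The property that $k$-validity actually controls, and that the paper's sketch correctly invokes, is \emph{minimality} of the cut, i.e.\ that both sides of the cut are connected in the contact graph (Remarks~\ref{rem:incidenceconfiguration}--\ref{rem:incidencecone}). A small example makes this concrete: for $k=1$, $n=7$, $\sigma=(1,0,0,1,0)$ (containing the forbidden pattern $1001$), one gets $D(\sigma)^{\text{1-rel}}=\{[1,3],[4,6]\}$, which extends to triangulations of $\cP_7$; for any such $T$ the two arcs both point into $V=\{\Delta_1,\Delta_4\}$, so the cut \emph{is} directed, but $\Delta_1$ and $\Delta_4$ are not adjacent in the dual tree, so $V$ is disconnected and the cut fails to be minimal. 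Without correcting this, your ``only if'' direction does not go through, and your ``if'' direction (which proposes ``constructing an orientation'' of the contact graph, though the orientation is in fact forced by $T$) is set up to prove the wrong thing.
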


\begin{proof}[Proof (sketch)]
Consider a sequence $\sigma\in\{0,1\}^{n-2k}$ and let $(U,V)$ be a partition of $[n-2k]$ such that $\sigma=\one_U=\one-\one_V$. Since $D(\sigma)$ contains no $(k+1)$-crossing, there exists a $k$-triangulation~$T$ of~$\cP_n$ containing $D(\sigma)$. We claim that the $k$-relevant edges of $D(\sigma)$ separate $U$ from $V$ in the contact graph~$(T^*)^\contact$, and that this cut is minimal if and only if $\sigma$ is $k$-valid. This claim together with Corollary~\ref{coro:fd} prove our proposition. We refer the reader to \cite{Pilaud} for details.
\end{proof}

\subsection{Towards a construction of the multiassociahedron?}\label{subsec:multiassociahedron}

Let $\Delta_n^k$ denote the simplicial complex of $(k+1)$-crossing-free sets of $k$-relevant diagonals of a convex $n$-gon. Its maximal elements are $k$-triangulations of the $n$-gon and thus it is pure of dimension $k(n-2k-1)-1$. In an unpublished manuscript~\cite{Jonsson}, Jonsson proved that $\Delta_n^k$ is in fact a shellable sphere (see also the recent preprint~\cite{SerranoStump}). However, the question is still open to know whether the sphere~$\Delta_n^k$ can be realized as the boundary complex of a simplicial $k(n-2k-1)$-dimensional polytope. As a conclusion, we discuss some interactions between this question and our paper.

\paragraph{Universality}
According to Pilaud \& Pocchiola's duality presented in Section~\ref{subsec:backgroundMulti}, this question seems to be only a particular subcase of Question~\ref{qu:polytope}. However, we show in the next proposition that the simplicial complexes $\Delta_n^k$ $(n,k\in\N)$ contain all simplicial complexes $\Delta(\cN)$ ($\cN$ sorting network). If $X$ is a subset of the ground set of a simplicial complex~$\Delta$, remember that the \defn{link} of $X$ in $\Delta$ is the subcomplex~$\set{Y \ssm X}{X \subset Y \in \Delta}$, while the \defn{star} of~$X$ is the join of $X$ with its~link.

\begin{proposition}\label{prop:universality}
For any sorting network~$\cN$ with $n$ levels and $m$ commutators, the simplicial complex~$\Delta(\cN)$ is (isomorphic to) a link of~$\Delta_{n+2m-2}^{m-1}$.
\end{proposition}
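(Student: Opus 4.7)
\emph{Proof plan.}
My plan uses the subword complex perspective of Section~\ref{subsec:subwordComplex} and relies on two key facts. First, the link of a face $\sigma$ in a subword complex $\Delta(Q,\rho)$ is itself the subword complex $\Delta(Q\setminus\sigma,\rho)$ obtained by deleting the positions of $\sigma$ from $Q$; in the sorting-network language, the link of~$\sigma$ in $\Delta(\cM)$ equals $\Delta(\cM\ssm\sigma)$. Second, extending the Pilaud--Pocchiola duality of Section~\ref{subsec:duality} from $1$- to $k$-triangulations (as recalled in Section~\ref{subsec:backgroundMulti}), the complex $\Delta_N^k$ is isomorphic to $\Delta(\widetilde\cN^k)$ for any reduced alternating sorting network~$\widetilde\cN$ on $N$ levels, where $\widetilde\cN^k$ denotes its $k$-kernel (a sorting network on~$N-2k$ levels).

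With the choice $N=n+2m-2$ and $k=m-1$ dictated by the statement, the kernel~$\widetilde\cN^k$ has $N-2k=n$ levels, matching those of $\cN$. The proposition therefore reduces to exhibiting a reduced alternating sorting network~$\widetilde\cN$ on $N$ levels such that $\cN$ is a sub-network of~$\widetilde\cN^{m-1}$. Indeed, letting~$\sigma$ denote the set of commutators of $\widetilde\cN^{m-1}$ that are not in~$\cN$, the hypothesis that $\cN$ is sorting makes $\sigma$ a face of $\Delta(\widetilde\cN^{m-1})$, and the link formula then gives that the link of~$\sigma$ in $\Delta_N^{m-1}\cong\Delta(\widetilde\cN^{m-1})$ is $\Delta(\widetilde\cN^{m-1}\ssm\sigma) = \Delta(\cN)$, as desired.

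To construct~$\widetilde\cN$, I would place $\cN$ in the middle $n$ levels (\ie levels $m,m+1,\ldots,n+m-1$) of a network on $N$ levels and route $m-1$ additional pseudolines above and $m-1$ below, so that $\widetilde\cN=\cN_x$ for a suitable word $x\in\{a,b\}^{N-2}$ in the sense of Section~\ref{subsec:duality}. The $2(m-1)$ extra levels provide exactly enough vertical room to accommodate all $\binom{N}{2}-m$ ``outer'' crossings not already in~$\cN$, and the bound $N=n+2m-2$ is tight for this counting: the number of commutators of $\widetilde\cN^{m-1}$ is $\binom{n+m-1}{2}-\binom{m}{2}$, of which exactly $m$ will come from~$\cN$ and the remaining $|\sigma|$ from the extra commutators. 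The main obstacle is choosing~$x$ so that the rigid alternating structure is compatible with the arbitrary pattern of commutators of~$\cN$ placed in the middle. I expect to handle this by induction on~$m$: at each step one adds one top pseudoline and one bottom pseudoline, using the local freedom in the letters of~$x$ to produce the required $N-1$ new crossings in the top and bottom regions while leaving the commutators of~$\cN$ in the middle untouched; after $m-1$ steps the resulting~$\widetilde\cN$ meets all requirements.
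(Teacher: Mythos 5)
Your plan follows the same two reductions as the paper's proof: (1) the link of a set of commutators $\gamma$ in $\Delta(\cM)$ is $\Delta(\cM\ssm\gamma)$, and (2) $\Delta_N^k$ is isomorphic to $\Delta(\cN_x^k)$ for any word $x\in\{a,b\}^{N-2}$ via the $k$-triangulation duality of Section~\ref{subsec:backgroundMulti}. From these the proposition reduces to exhibiting a reduced alternating sorting network on $N=n+2m-2$ levels whose $(m-1)$-kernel contains $\cN$ as a subnetwork, exactly as you say. All of this is correct, and it is also exactly how the paper sets up the problem (the paper phrases (1) via the correspondence $\arr(\cN|\gamma)\leftrightarrow\arr(\cN\ssm\gamma)$ rather than via subword complexes, but that is cosmetic).

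The gap is that you never construct the required network, and the entire substance of the proposition lies precisely there. Your sketch is an ``induction on $m$'' that adds one pseudoline above and one below at each step, but this has no usable inductive hypothesis: the intermediate networks obtained along the way are neither reduced, nor alternating, nor the kernel of anything, so there is nothing to which a hypothesis about sorting networks with fewer commutators could be applied. You yourself flag the real obstacle---the alternating condition is rigid, and it is not clear that any ``local freedom'' in $x$ remains once the $m$ commutators of $\cN$ occupy prescribed positions in the middle $n$ levels. The paper sidesteps all of this with a single explicit, non-inductive construction that requires no tuning of $x$: it fixes $x=b^{n+2m-4}$, i.e.\ the bubble sort network $\cB_{n+2m-2}$, labels the commutators of $\cN$ from left to right (then top to bottom) so that the $i$\textsuperscript{th} one joins levels $i_\square$ and $i_\square+1$, and sets $W=\set{\{i,\,i+i_\square+m-1\}}{i\in[m]}$. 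One then checks directly that the commutators of $\cB_{n+2m-2}^{m-1}$ labeled by the diagonals in $W$ are positioned exactly as the commutators of $\cN$. To turn your proposal into a proof you would need either to produce and verify such an explicit set, or to replace the vague inductive sketch by an argument with a genuine well-stated invariant; as written, the existence of $\widetilde\cN$ is asserted rather than established.
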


\begin{proof}
Label the commutators of~$\cN$ from left to right (and from top to bottom if some commutators lie on the same vertical line). For any~$i\in[m]$, let $i_\square$ be such that the $i$\textsuperscript{th} commutator of~$\cN$ join the $i_\square$\textsuperscript{th} and~$(i_\square+1)$\textsuperscript{th} levels of~$\cN$. Define the set~$W \eqdef \set{\{i,i+i_\square+m-1\}}{i\in[m]}$.

We claim that the simplicial complex~$\Delta(\cN)$ is isomorphic to the simplicial complex of all $m$-crossing-free sets of $(m-1)$-relevant diagonals of the \mbox{$(n+2m-2)$-gon} which contain the diagonals not labeled by~$W$. Indeed, by duality, the \mbox{$(m-1)$-trian}\-gulations of this complex correspond to the pseudoline arrangements supported by~$\cB_{n+2m-2}^{m-1}$ whose sets of contacts contain the contacts of $\cB_{n+2m-2}^{m-1}$ not labeled by~$W$. Thus, our claim follows from the observation that, by construction, the contacts labeled by~$W$ are positioned exactly as those of~$\cN$~---~see \fref{fig:universality}.
\end{proof}

\begin{figure}[h]
	\capstart
	\centerline{\includegraphics[scale=1.2]{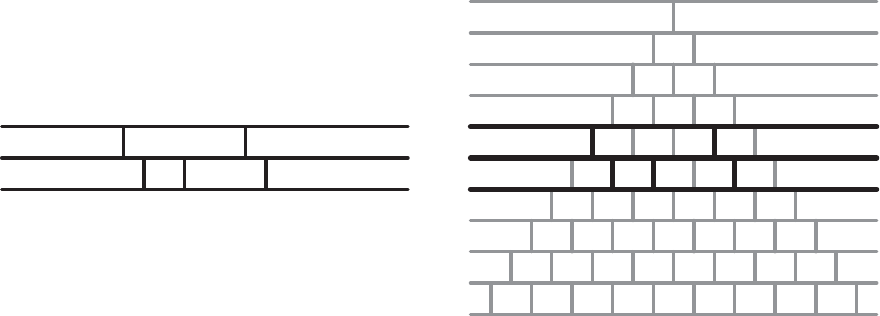}}
	\caption{Universality of the multitriangulations: any sorting network~$\cN$ is a subnetwork of a bubble sort network~$\cB_n^k$ for some parameters~$n$ and~$k$ depending on~$\cN$.}
	\label{fig:universality}
\end{figure}

Since links in shellable spheres are shellable spheres, this proposition extends Jonsson's result~\cite{Jonsson} to any sorting network:

\begin{corollary}\label{coro:shellable}
For any sorting network~$\cN$, the simplicial complex~$\Delta(\cN)$ is a shellable sphere.
\end{corollary}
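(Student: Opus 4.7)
The plan is to deduce the corollary directly from the universality statement Proposition~\ref{prop:universality} combined with Jonsson's theorem on the shellability of $\Delta_n^k$. Given a sorting network~$\cN$ with $n$ levels and $m$ commutators, Proposition~\ref{prop:universality} exhibits an isomorphism between $\Delta(\cN)$ and the subcomplex of $\Delta_{n+2m-2}^{m-1}$ consisting of the $m$-crossing-free sets of $(m-1)$-relevant diagonals of the $(n+2m-2)$-gon that contain a fixed set $X$ of diagonals (those not labeled by $W$). Thus $\Delta(\cN)$ is canonically isomorphic to the link of $X$ in $\Delta_{n+2m-2}^{m-1}$.

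Next, I would invoke Jonsson's result \cite{Jonsson} (see also \cite{SerranoStump}) stating that $\Delta_n^k$ is a shellable simplicial sphere for all $n,k$. It is then enough to observe two standard facts from combinatorial topology: the link of a face in a shellable complex is shellable (a shelling of the ambient complex restricts to a shelling of the link of any face by keeping only the facets containing that face and removing the face itself), and the link of a face in a PL sphere is again a PL sphere of the appropriate dimension. Applying both to $X \subset \Delta_{n+2m-2}^{m-1}$ yields that $\Delta(\cN)$ is a shellable sphere.

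The only mild care needed is checking that the dimension is right: if $\Delta_{n+2m-2}^{m-1}$ has dimension $(m-1)\bigl((n+2m-2)-2(m-1)-1\bigr) - 1 = (m-1)(n-1)-1$ and $X$ has cardinality equal to the number of diagonals not labeled by $W$, then the link has dimension $m - \binom{n}{2} - 1$, matching the expected dimension of $\Delta(\cN)$. This is a routine verification from the construction of~$W$ in the proof of Proposition~\ref{prop:universality}. I do not expect any real obstacle beyond citing the standard link-of-shellable-is-shellable lemma; the substance of the corollary lives in Proposition~\ref{prop:universality} and Jonsson's theorem.
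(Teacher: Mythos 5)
Your proof is correct and follows precisely the paper's own route: the paper simply remarks, immediately before stating the corollary, that since $\Delta(\cN)$ is a link of $\Delta_{n+2m-2}^{m-1}$ by Proposition~\ref{prop:universality}, and since links in shellable spheres are shellable spheres, Jonsson's result applies. Your additional dimension check is a reasonable sanity verification but is not part of the paper's (one-line) argument.
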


This corollary is also a consequence of Knutson \& Miller's results on the shellability of any subword complex~\cite{KnutsonMiller}.
Proposition~\ref{prop:universality} would also extend any proof of the polytopality of $\Delta_n^k$ to that of $\Delta(\cN)$:

\begin{corollary}
If the simplicial complex $\Delta_n^k$ is polytopal for any $n,k\in\N$, then the simplicial complex $\Delta(\cN)$ is polytopal for any sorting network~$\cN$.
\end{corollary}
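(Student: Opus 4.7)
My plan is to derive the corollary from Proposition~\ref{prop:universality} together with the classical fact that every link in the boundary complex of a simplicial polytope is itself the boundary complex of a simplicial polytope (the same pattern of reasoning the authors use for Corollary~\ref{coro:shellable}, but applied to polytopality in place of shellability). Given an arbitrary sorting network~$\cN$ with~$n$ levels and~$m$ commutators, Proposition~\ref{prop:universality} exhibits a face~$X$ of $\Delta_{n+2m-2}^{m-1}$ such that $\Delta(\cN)$ is isomorphic to the link of~$X$ in~$\Delta_{n+2m-2}^{m-1}$. By the hypothesis of the corollary, applied with parameters $n+2m-2$ and $m-1$, this simplicial complex is the boundary complex of some simplicial polytope~$P$, and~$X$ corresponds to a simplex face~$F$ of~$P$.

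It then suffices to exhibit a simplicial polytope whose boundary complex is the link of~$F$ in~$\partial P$. I would invoke the iterated vertex figure construction: for any vertex~$v$ of a simplicial polytope~$Q$, intersecting~$Q$ with a generic hyperplane separating~$v$ from the remaining vertices of~$Q$ yields a simplicial polytope~$Q/v$ of dimension $\dim(Q)-1$ whose boundary complex is isomorphic to the link of~$v$ in~$\partial Q$. Applying this operation successively to each of the vertices of~$F$ produces a simplicial polytope of dimension $\dim(P)-\dim(F)-1$ whose boundary complex is isomorphic to the link of~$F$ in~$\partial P$, and hence to~$\Delta(\cN)$. (Equivalently, one can argue by duality: the face of the simple polytope~$P^*$ dual to~$F$ is itself simple, and its polar is a simplicial polytope realizing the link of~$F$.) Checking that this polytope has the expected dimension $m-\binom{n}{2}-1$ is automatic from the dimensions supplied by Proposition~\ref{prop:universality}.

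The main, and essentially only, delicate point is the standard verification that the combinatorial type of the iterated vertex figure is independent of the choice of cutting hyperplanes, so that the construction well-defines a polytopal realization of the link. Since this is entirely classical, no real obstacle remains: the corollary is a one-step reduction from the polytopal realization of the multiassociahedron to the polytopal realization of~$\Delta(\cN)$.
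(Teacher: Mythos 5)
Your proposal is correct and takes the same approach the paper intends: the paper presents this corollary without proof, as an immediate consequence of Proposition~\ref{prop:universality}, relying on exactly the classical fact you supply, namely that a link of a face in the boundary complex of a simplicial polytope is again the boundary complex of a simplicial polytope (via iterated vertex figures, or equivalently by polarizing a face of the dual simple polytope). The one detail you leave implicit is worth stating: the set of diagonals not labeled by $W$ in the proof of Proposition~\ref{prop:universality} is indeed a face of $\Delta_{n+2m-2}^{m-1}$ because $\cN$ is sorting, so the link is well defined.
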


In particular, if one manages to construct a multiassociahedron realizing the simplicial complex~$\Delta_n^k$, it would automatically provide an alternative construction to the polytope of pseudotriangulations~\cite{RoteSantosStreinu-polytope}. We want to underline again that the brick polytope does not realizes the simplicial complex of pointed crossing-free sets of internal edges in a planar point set.

\paragraph{Projections and brick polytopes}
In this paper, we have associated to each \mbox{$k$-trian}gulation~$T$ of the $n$-gon the brick vector~$\omega(T^*) \in \R^{n-2k}$ of its dual pseudoline arrangement~$T^*$ on~$\cB_n^k$. We have seen that the convex hull~$\Omega(\cB_n^k)$ of the set $\set{\omega(T^*)}{T \; k\text{-triangulation of the } n\text{-gon}}$ satisfies the following two properties:
\begin{enumerate}[(i)]
\item The graph of~$\Omega(\cB_n^k)$ is (isomorphic to) a subgraph of the graph of flips on $k$-triangulations of an $n$-gon (Corollary~\ref{coro:graph}).
\item The set of $k$-triangulations whose brick vector belongs to a given face of~$\Omega(\cB_n^k)$ forms a star of~$\Delta_n^k$ (Proposition~\ref{prop:multi}).
\end{enumerate}
One could thus reasonably believe that our point set could be a projection of the polar of a hypothetical realization of~$\Delta_n^k$. The following observation kills this hope:

\begin{proposition}
Let $P$ be the polar of any realization of~$\Delta_7^2$. It is impossible to project~$P$ down to the plane such that the vertex of~$P$ labeled by each \mbox{$2$-triangulation}~$T$ of the heptagon is sent to the corresponding brick vector~$\omega(T^*)$.
\end{proposition}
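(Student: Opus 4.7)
The plan is to suppose, for contradiction, that such a projection $\pi\colon\R^4\to\R^2$ exists, and to exploit the fact that $\pi^{-1}(F)\cap P$ must be a face of $P$ for every face $F$ of the hexagon $\Omega(\cB_7^2)$.

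First I enumerate the fourteen $2$-triangulations of the heptagon, each a four-element subset of the seven diagonals $[i,i+3]$ with no three pairwise crossing, and locate their brick vectors using Proposition~\ref{prop:multi} and the six $2$-valid sequences in $\{0,1\}^3$. One finds six brick vectors at the hexagon vertices, five lying in the relative interior of five of its edges (the edge dual to $\sigma=101$, whose set $D(\sigma)$ contains three $2$-relevant diagonals, carries no additional brick vector), and three lying in the interior of the hexagon. The three interior triangulations, together with the one edge-triangulation on the edge $\sigma=010$, all contain the pair of long diagonals $\{[1,5],[3,7]\}$; they are therefore the four vertices of the $2$-face $\Phi$ of $P$ dual to this pair of diagonals, which is thus a quadrilateral.

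Since $\pi$ is a projection of polytopes, the set $\pi^{-1}(F)\cap P$ is a face of $P$ whose vertex set is exactly $\{v_T:\omega(T^*)\in F\}$ by the polar correspondence between $P$ and $\Delta_7^2$. This identifies the preimages of the hexagon's six edges as five specific triangular $2$-faces of $P$ together with the edge of $P$ dual to the $2$-simplex $\{[1,4],[2,6],[4,7]\}$ selected by $D(101)$. Combining these identifications with the constraint that every $2$-face of $P$ (the seven triangles from distance-$1$ pairs of long diagonals, the seven quadrilaterals from distance-$2$ pairs, and the seven pentagons from distance-$3$ pairs, cyclically around the heptagon) must project to a convex polygon in the hexagon that respects the cyclic order of its vertices yields a long list of combinatorial and affine identities forced among the fourteen brick vectors.

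The main obstacle, and the crux of the proof, is the explicit verification that these constraints are inconsistent. Concretely I would compute the fourteen brick vectors from the dual pseudoline arrangements on $\cB_7^2$ (a network with three levels, seven commutators, and five bricks, so each brick vector has three readily computable integer entries), and then exhibit one interior brick vector that two distinct $2$-face constraints force to equal two different affine combinations of other brick vectors. The difference of the two corresponding lifts to $\R^4$ would simultaneously be required to lie in $\ker\pi$ and to represent a nontrivial affine relation among pairwise distinct vertices of the simple $4$-polytope $P$; this contradiction cannot be escaped by varying the realization $R$ of $\Delta_7^2$, and thus proves the proposition.
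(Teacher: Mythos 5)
Your setup matches the paper's: you correctly identify $\Omega(\cB_7^2)$ as a hexagon, use $\pi(P)=\Omega(\cB_7^2)$, and observe that preimages of faces of the hexagon are faces of $P$ with prescribed vertex sets. But the heart of the proof --- actually producing the contradiction --- is left undone: you describe what you ``would compute'' and assert an inconsistency ``would'' emerge, without ever exhibiting one. Worse, the mechanism you sketch for the final contradiction is not sound as stated. A vector lying in $\ker\pi$ is no constraint at all (that kernel is a genuine $2$-plane), and ``a nontrivial affine relation among pairwise distinct vertices of a simple $4$-polytope'' is completely ordinary: every $2$-face with four or more vertices already gives one. You would need to show that the \emph{particular} relation you extract is incompatible with $P$ realizing $\Delta_7^2$, and nothing in the proposal supplies that step. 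You also never address the degenerate case where a $2$-face of $P$ projects onto a segment; in that case your ``convex polygon respecting cyclic order'' condition imposes no constraint, so any argument built only on cyclic order has a hole.

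The paper closes exactly this gap with a sharper observation. Fix the facet of $P$ dual to one $2$-relevant diagonal, and note that two non-parallel edges of a $2$-face of $P$ either project to non-parallel segments or the entire face collapses onto a line. Checking the brick vectors shows that several pairs of edges in the pentagonal and quadrilateral $2$-faces inside this facet project to two \emph{distinct} parallel lines; this rules out the degenerate case, so those edges must already be parallel in $P$. Chasing these parallelisms around the facet forces two edges of a triangular $2$-face of $P$, sharing a vertex, to be parallel --- impossible in any realization of $\Delta_7^2$. That is the realization-independent conclusion your outline needs and does not reach.
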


\begin{proof}
The network~$\cB_7^2$ is a $3$-level alternating network and thus has been addressed in Example~\ref{exm:3levels}. See \fref{fig:B72} (left) to visualize the brick polytope~$\Omega(\cB_7^2)$. The contacts are labeled from left to right, and the brick vector of a pseudoline arrangement covering $\cB_7^2$ is labeled by its four contacts.

\begin{figure}[h]
	\capstart
	\centerline{\includegraphics[scale=.9]{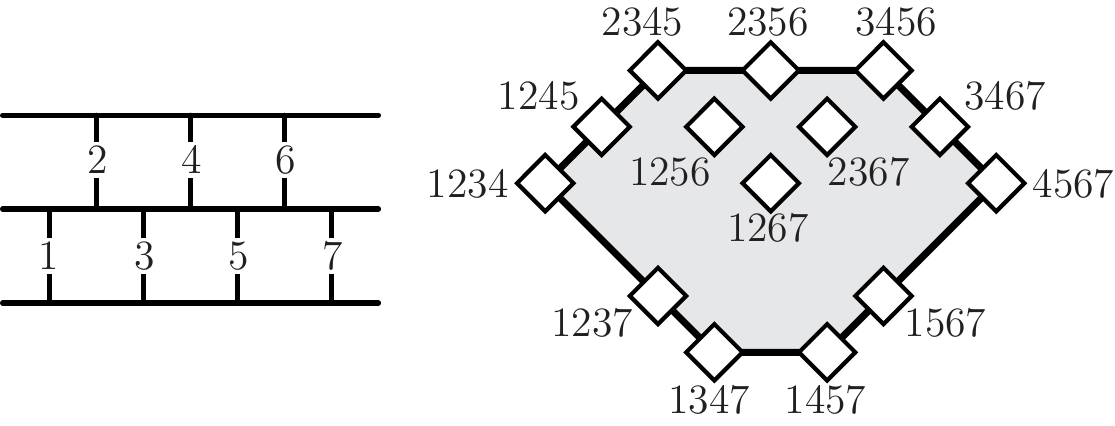}}
	\caption{The $2$-dimensional brick polytope $\Omega(\cB_7^2)$.}
	\label{fig:B72}
\end{figure}

Assume that there exists a polytope~$P$ whose polar realizes~$\Delta_7^2$ and which projects to the point configuration $\set{\omega(T^*)}{T \; 2\text{-triangulation of the heptagon}}$. Observe that the projections of two non-parallel edges of a $2$-dimensional face of~$P$ either are not parallel or lie on a common line (when the $2$-dimensional face is projected to a segment of this line). We will reach a contradiction by considering the facet of~$P$ labeled by $4$. We have represented in \fref{fig:B72bis} the projections of its \mbox{$2$-dimen}\-sional faces: there are two triangles $24$ and $46$ which project to a segment, two quadrilaterals $14$ and $47$ and two pentagons $34$ and $45$. Since they belong to the $2$-dimensional face~$45$ of~$P$, and since they project on two distinct parallel lines, the two edges $(3456,4567)$ and $(1245,1457)$ of~$P$ are parallel. Similarly, the edges $(1245,1457)$ and $(1234,1347)$ are parallel and the edges~$(1234,1347)$ and $(3456,3467)$ are parallel. By transitivity, we obtain that the edges $(3456,4567)$ and $(3456,3467)$ of~$P$ are parallel which is impossible since they belong to the triangular face~$46$.
\begin{figure}
	\capstart
	\centerline{\includegraphics[scale=.9]{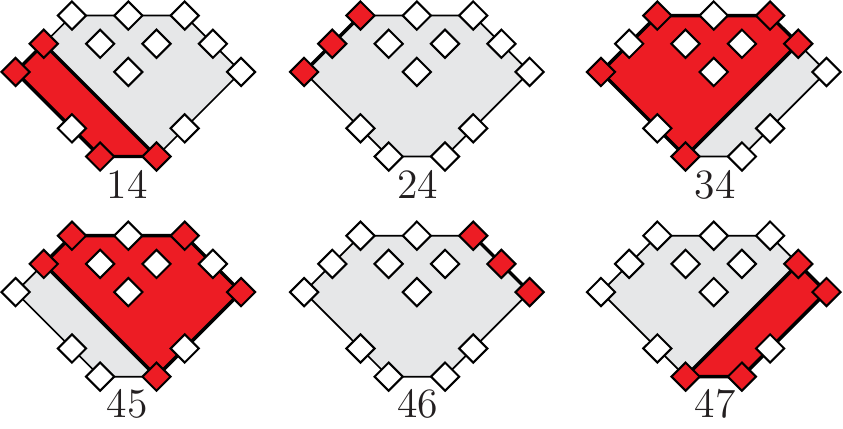}}
	\caption{Projections of some $2$-dimensional faces of~$\Delta_7^2$ on $\Omega(\cB_7^2)$.}
	\label{fig:B72bis}
\end{figure}
\end{proof}

%%%%%%%%%%%%%%%%%%%

\section*{Acknowledgment}

We thank Christian Stump for helpful comments on a preliminary version of this paper. We are also grateful to Christophe Hohlweg and Carsten Lange for interesting discussion about this paper and related topics. Finally, we thank two anonymous referees for relevant suggestions.

%%%%%%%%%%%%%%%%%%%

\bibliographystyle{alpha}
\bibliography{biblio.bib}

\end{document}